\newtheorem{theorem}{Theorem}
\title{Exact expressions for the unresolved stress in a finite-volume based large-eddy simulation}
\date{\today}
\begin{document}

\begin{frontmatter}

\author[cwi,eindhoven]{Syver Døving Agdestein\corref{cor1}}
\ead{sda@cwi.nl}
\author[groningen]{Roel Verstappen}
\ead{r.w.c.p.verstappen@rug.nl}
\author[cwi,eindhoven]{Benjamin Sanderse}
\ead{b.sanderse@cwi.nl}

\cortext[cor1]{Corresponding author}

\affiliation[cwi]{
    organization={Scientific Computing Group, Centrum Wiskunde \& Informatica},
    addressline={Science Park 123}, 
    city={Amsterdam},
    postcode={1098 XG}, 
    country={The Netherlands}
}
\affiliation[eindhoven]{
    organization={Centre for Analysis, Scientific Computing and Applications, Eindhoven University of Technology},
    addressline={PO Box 513}, 
    city={Eindhoven},
    postcode={5600 MB}, 
    country={The Netherlands}
}
\affiliation[groningen]{
    organization={
        Bernoulli Institute for
        Mathematics,
        Computer Science and
        Artificial Intelligence,
        University of Groningen
    },
    addressline={Nijenborgh 9}, 
    city={Groningen},
    postcode={9747 AG}, 
    country={The Netherlands}
}

\begin{abstract}
In this article we propose new discretization-informed expressions for the
residual stress tensor (RST) in a finite-volume based large-eddy simulation
(LES-FVM).
In addition to the classical RST $\overline{u u} - \bar{u} \bar{u}$
resulting from the non-commutation between filtering and the nonlinear stress,
our RST also contains contributions from the numerical flux,
discrete divergence, and pressure terms.
Unlike the classical RST, our proposed RST is non-symmetric and non-local.
The proposed form of the RST is important for generating appropriate reference
data for LES closure modeling.

Based on DNS results of the 1D Burgers and 3D incompressible Navier-Stokes
equations, we show that the discretization-induced parts of the RST
play an important role in the LES-FVM equation for common LES filter
widths.
When the discrete contribution is included, our RST expression gives zero
a-posteriori error in LES, while existing RST expressions give errors that
increase over time. For a Smagorinsky model, we show that the Smagorinsky
coefficient is higher when fitted to our new RST than when fitted to the
classical RST and gives improved results.

\end{abstract}

\begin{graphicalabstract}
\begin{center}
\includegraphics[width=\textwidth]{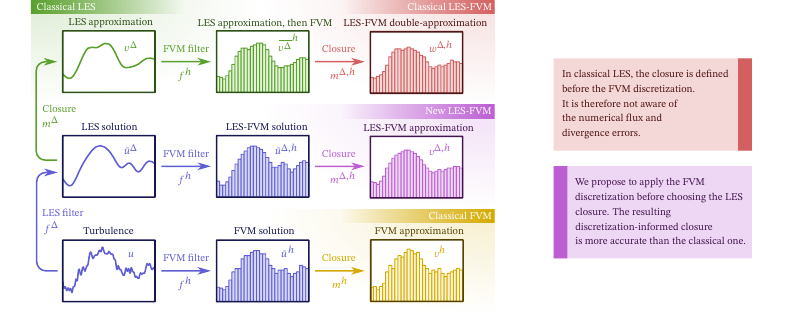}
\end{center}
\end{graphicalabstract}

\begin{highlights}

\item
We unite large-eddy simulation (LES) and the finite-volume method (FVM) into LES-FVM.

\item
We derive an exact expression for the residual stress tensor (RST) in LES-FVM.

\item
For incompressible flows, the RST is shown to be non-local and non-symmetric.

\item
In a DNS-aided LES, our RST gives zero a-posteriori error, unlike the classical RST.

\item
Accounting for the new RST improves the performance of a Smagorinsky model.

\end{highlights}

\begin{keyword}
commutator errors \sep
closure modeling \sep
data-consistency \sep
filtering \sep
finite volume method \sep
large-eddy simulation \sep
sub-filter stress \sep
turbulence
\end{keyword}

\end{frontmatter}

\section{Introduction}

Turbulent fluid flows can be modeled by the incompressible Navier-Stokes
equations, but they are in general computationally too expensive to solve using
direct numerical simulation (DNS). Large eddy simulation (LES) consists of
finding equations for the large-scale features of the flow, which are extracted
using a spatial filter. The LES equations can be solved using fewer numerical
computations.

The incompressible Navier-Stokes equations can be discretized using the finite
volume method (FVM). Like LES, the FVM considers filtered velocities, with the
filter being the average over a control volume. In LES, this filter can have
other definitions, for example a convolution with an arbitrary kernel function.
Unlike LES, the FVM equations are discrete by design.

The continuous LES equations include a continuous divergence of a flux function.
The discrete FVM equations contain an integral of a flux function over the
control volume boundary.
By defining this boundary integral as a \emph{discrete} divergence operator
applied to the given flux,
the FVM equations take the same form as the continuous LES equation.
This allows for treating LES and the FVM as the same problem, as suggested by
Verstappen~\cite{verstappenMergingFilteringModeling2025}.

Both the FVM and LES result in approximate equations for the filtered velocity
field $\bar{u}$ that are different from the exact filtered conservation law. The
mismatch between the model equations and the exact filtered conservation law can
be written as a residual term, sometimes called a commutator error.

Most works on the closure problem focus on
modeling the commutator between filtering and
nonlinearities~\cite{
popeTurbulentFlows2000,
berselliMathematicsLargeEddy2006,
sagautLargeEddySimulation2006}.
Assuming filtering commutes with
spatial and temporal differentiation, the residual term
takes the form of the divergence of a tensor,
$\nabla \cdot \tau(u)$,
which is a function of the resolved and unresolved velocity
fields $\bar{u}$ and $u$ through the residual stress tensor (RST)
$\tau_{i j}(u) \coloneq \overline{u_i u_j} - \bar{u}_i \bar{u}_j$.
The exact LES equations are approximated by replacing $\tau(u)$ with a closure
model $m(\bar{u})$. \emph{Functional} models try to match the dissipation
produced by $\tau(u)$,
while \emph{structural} models also try to model the structure of the RST
$\tau(u)$ itself~\cite{
sagautLargeEddySimulation2006,
luStructuralSubgridscaleModeling2016}.

Apart from the difficulty of correctly modeling the residual
$\nabla \cdot \tau(u)$,
another major challenge in LES is the appearance of additional
residual terms when the equations are
discretized~\cite{baeNumericalModelingError2022,
beckDiscretizationConsistentClosureSchemes2023}.
These terms are commonly referred to as
\emph{discretization errors}.
For common discretization schemes, they are controlled by convergence
bounds on the grid spacing $h$.

For typical LES scenarios, the filter width $\Delta$ is of the same order of
magnitude as the grid spacing $h$~\cite{
ghosalAnalysisNumericalErrors1996,
chowFurtherStudyNumerical2003}, sometimes with an exact equality
$\Delta = h$~\cite{deardorffMagnitudeSubgridScale1971,
normandDirectLargeeddySimulations1992,
vremanComparisonNumericalSchemes1996,
geurtsNumericallyInducedHighpass2005}.
For this reason the discretization errors can be of the same order of
magnitude as
$\nabla \cdot \tau(u)$~\cite{clarkEvaluationSubgridscaleModels1979},
which can strongly limit the usefulness of any closure modeling effort that only
accounts for $\nabla \cdot \tau(u)$. This was illustrated by Bae and
Lozano-Duran, who used DNS to assess the importance of the discretization
error by computing the residual $\nabla \cdot \tau(u)$
explicitly~\cite{baeNumericalModelingError2022}.
A recognition of this issue has
motivated the development of LES frameworks where the total residual from
both non-linearities and discretization are modeled~\cite{
geurtsNumericallyInducedHighpass2005,
dairayNumericalDissipationVs2017,
beckDiscretizationConsistentClosureSchemes2023}.
However, the exact form of this total residual term has not yet been well-defined.

We are interested in the exact discretization-informed expression for the
total residual because the final goal is to use such an expression as
target data for tuning the parameters of a closure model.
In previous work, we showed that using a discretization-informed residual
as target data for data-driven closure models gives stable models,
while using incorrect target data leads to
instabilities~\cite{agdesteinDiscretizeFirstFilter2025}.
In this work, we introduce a new mathematical formalism to derive exact
expressions for the residual appearing in the discretized LES equations.
Furthermore, we write the residual in the form of a discrete divergence of an
RST. We show how this RST differs between classical LES, the classical FVM, and
propose a new discrete LES-FVM framework that merges the two.

\begin{figure*}[t]
    \centering
    \def\svgwidth{\textwidth}
    \includegraphics[width=\textwidth]{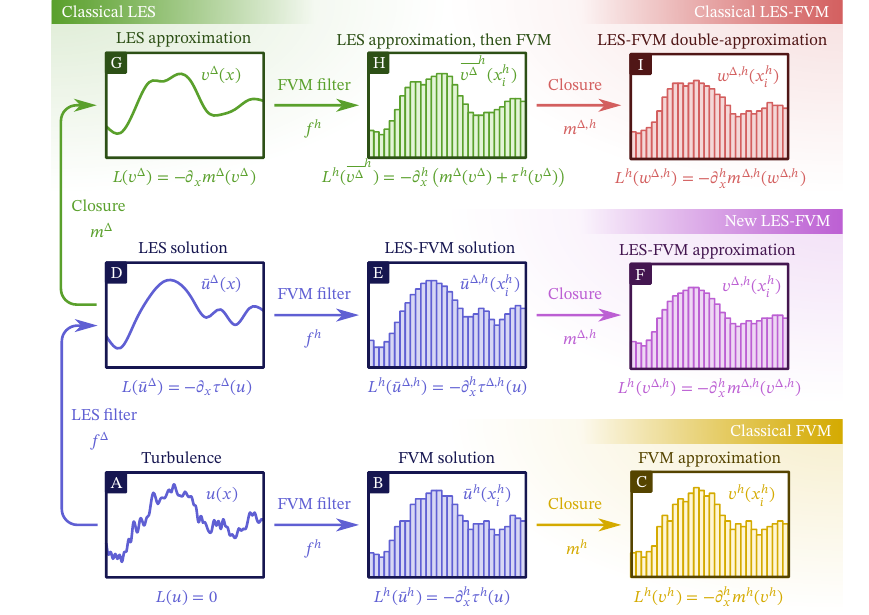}
    \caption{
        Three different routes to a closed system of discrete equations that
        simulate the large-scales of a turbulent flow $u$.
        A change of color indicates an approximation step,
        while a preservation of color indicates an exact operation.
        The grid points are $(x^h_i)_{i \in \mathbb{Z}}$.
        Classical LES-FVM relies on two separate
        approximation steps, whereas the FVM and our proposed LES-FVM
        framework only rely on one approximation step.
    }
    \label{fig:threepaths}
\end{figure*}

The main ideas in this article are visualized in \cref{fig:threepaths}.
The notation used in the figure will be explained in detail in the following sections.
All equations are derived from the original conservation law for the
turbulent solution $u$ in \cref{fig:threepaths}-A.
The approach we use is to first transform the equation for $u$ by applying LES
and FVM filters.
The equations are then rewritten into the desired form, and the residual terms 
are gathered in an unresolved stress.
The approximation steps (called ``closures'')
are only performed after the exact residual stress has been defined.
These approximation steps are indicated by a change of color in the figure.

In \cref{sec:conservation-law},
we first introduce our notation.
For 1D conservation laws, we derive classical LES and the
FVM in this notation, so that in \cref{sec:les-fvm}, we can merge classical LES
and the FVM into a unified discrete LES-FVM framework. We derive new exact
discretization-informed RST expressions for the given framework.
In \cref{sec:burgers},
the importance of the RST definition is tested for the 1D
Burgers equation. We show that using our discretization-informed RST as
a closure term gives a ``perfect'' closure model with zero a-posteriori error,
while the classical RST gives errors that accumulate over time. This difference
remains visible when the RSTs are used as target data in a Smagorinsky model.
In \cref{sec:navier-stokes}, we extend our LES-FVM framework to the 3D
incompressible Navier-Stokes equations.
Due to the incompressibility, the RST is shown to be non-local.
The RST is also shown to be non-symmetric.
In \cref{sec:navier-stokes-experiments}, we repeat the Burgers experiments for a
3D decaying turbulence simulation, confirming that our expressions are correct
for the full incompressible Navier-Stokes equations.

\section{Preliminaries: LES and the FVM} \label{sec:conservation-law}

Let $\Omega = [0, \ell]$ be a 1D domain with length $\ell > 0$.
Let $U \coloneq \{ u : \mathbb{R} \to \mathbb{R} \mid u(x) = u(x + \ell) \}$
be the space of periodic functions on $\Omega$.
Depending on the problem,
the space $U$ may need to be further restricted
to more regular spaces such as
$L^2(\Omega)$ or $H^1(\Omega)$~\cite{berselliMathematicsLargeEddy2006}.

Consider the generic infinitesimal 1D conservation law
\begin{equation} \label{eq:conservation-law}
    L(u) \coloneq \partial_t u + \partial_x r(u) = 0
\end{equation}
(see \cref{fig:threepaths}-A),
where $u(x, t)$ is the solution at a given point $x$ and time $t$,
$L \coloneq \partial_t + \partial_x r$ is the equation operator,
$\partial_t \coloneq \partial / \partial t$ and
$\partial_x \coloneq \partial / \partial x$ are partial derivatives,
and the flux $r : U \to U$ is a non-linear spatial operator.
We call the conservation law \emph{infinitesimal}
since the divergence $\partial_x$ and potentially the flux $r$ are
infinitesimal operators
(as opposed to discrete operators that can be computed on a grid).
For simplicity, we did not include a source term in \eqref{eq:conservation-law}.
Adding a source term is straightforward and does not change the
derivations in this article.

The conservation law \eqref{eq:conservation-law} is a continuous equation defined
by requiring that the field $L(u) \in U$ is zero everywhere.
We can evaluate $L(u)$ in a point $x \in \Omega$ and time $t \geq 0$
as $L(u)(x, t) \in \mathbb{R}$.
In the following, we omit the time $t$ and write
$L(u)(x) \in \mathbb{R}$ and $u(x) \in \mathbb{R}$ etc.

An example of a non-linear conservation law is the viscous Burgers equation.
The corresponding flux is defined as
\begin{align}
    r(u) & \coloneq \frac{1}{2} u u - \nu \partial_x u, \label{eq:burgers}
\end{align}
where $\nu > 0$ is a constant viscosity
(diffusion coefficient).

The PDE \eqref{eq:conservation-law} can be discretized and solved directly
by using the FVM.
Alternatively, the equation can first be filtered and
approximated with an LES closure.
To distinguish between these two approaches, we use the superscript
$(\cdot)^\Delta$ for quantities related to LES, and $(\cdot)^h$ for
quantities related to the FVM.

\subsection{Filtering and LES} \label{sec:les}

Equation \eqref{eq:conservation-law} describes all the scales of motion for
the given system.
Filtering \eqref{eq:conservation-law} with a convolutional LES filter
\begin{equation}
    f^\Delta : U \to U, \ u \mapsto \bar{u}^\Delta
\end{equation}
of filter-width $\Delta \geq 0$
gives the LES equation
\begin{equation} \label{eq:filtered-conservation-law}
    \overline{L(u)}^\Delta = 0,
\end{equation}
where $\bar{u}^\Delta \coloneq f^\Delta u$
is the LES solution that we intend to solve for.

The convolution $f^\Delta$ is defined through a kernel
$G^\Delta : \mathbb{R} \to \mathbb{R}$ by
\begin{equation} \label{eq:convolution}
    \bar{u}^\Delta(x) \coloneq \int_\mathbb{R} G^\Delta(x - y) u(y) \, \mathrm{d} y
\end{equation}
for all $u \in U$ and $x \in \Omega$.
Note that we integrate over $\mathbb{R}$ (not $\Omega$), since the filter
kernel $G^\Delta$ needs to be extended beyond the periodic boundary.
Some kernels (such as the Gaussian kernel) have infinite support.
In practice, such kernels are truncated, and one periodic extension
is sufficient.

It is common to decompose the LES equation into a resolved and
unresolved part as
\begin{equation} \label{eq:cl-non-conservative}
\begin{split}
    L(\bar{u}^\Delta)
    & = -\left( \overline{L(u)}^\Delta - L(\bar{u}^\Delta) \right) \\
    & = -\left(\overline{\partial_x r(u)}^\Delta - \partial_x r(\bar{u}^\Delta)\right),
\end{split}
\end{equation}
where the left-hand side only depends on the resolved scales $\bar{u}^\Delta$
and the right-hand side is a residual that still depends on the full
solution $u$.
This term is not yet the ``divergence of a flux'', meaning that
\eqref{eq:cl-non-conservative} is not expressed as a conservation law.
However, since $f^\Delta$ is a convolution,
it satisfies the \emph{filter-swap} commutation property
\begin{equation} \label{eq:continuous-commutation}
    f^\Delta \partial_x = \partial_x f^\Delta.
\end{equation}
For proof, see \cref{th:commutation-f-partial} in \ref{sec:proofs}.
This can also be written as
$\overline{\partial_x u}^\Delta = \partial_x \bar{u}^\Delta$ for all $u \in U$.
This commutation property can be used to
obtain a \emph{conservative} form
of the residual and rewrite \cref{eq:cl-non-conservative}
as a conservation law
\begin{equation} \label{eq:les}
    L(\bar{u}^\Delta) = -\partial_x \tau^\Delta(u)
\end{equation}
(see \cref{fig:threepaths}-D),
where the commutator between $L$ and $f^\Delta$ takes the form of the
divergence of a residual flux
\begin{equation}
    \boxed{\tau^\Delta(u) \coloneq \overline{r(u)}^\Delta - r(\bar{u}^\Delta)}.
\end{equation}
This flux is also known as the \emph{sub-filter} flux.
For the Burgers equation, we get the well-known expression
$\tau^\Delta(u) = (\overline{u u}^\Delta - \bar{u}^\Delta \bar{u}^\Delta) / 2$.
As $\tau^\Delta(u)$ is a flux,
it has dissipation properties that could potentially be replicated by a
closure model.
For example, a convective flux conserves energy, while a diffusive flux
dissipates energy. Since $\partial_x \tau^\Delta(u)$ is of conservative form, 
the filtered momentum is conserved,
which cannot be guaranteed otherwise.
We highlight these properties and how they are obtained here since they do not
automatically apply in the discrete case.

Equation \eqref{eq:les} is exact, but unclosed.
The next step in LES is to approximate the residual flux by a closure model
$m^\Delta(\bar{u}^\Delta) \approx \tau^\Delta(u)$.
\footnote{
    We use the symbol ``$\approx$'' solely to indicate the \emph{intent} of an
    approximation to guide the reader.
    The statement ``$a \approx b$'' provides no guarantee that $a$ is in any
    way similar or close to $b$.
    Instead, we take care to use different symbols for new quantities arising
    from an approximation step, for example $v^\Delta \neq \bar{u}^\Delta$.
}
Closure models are often chosen in the functional eddy-viscosity form:
\begin{equation} \label{eq:smagorinsky}
    m^\Delta(\bar{u}^\Delta) \coloneq
    -\nu^\Delta(\bar{u}^\Delta) \partial_x \bar{u}^\Delta,
\end{equation}
where $\nu^\Delta : U \to U$ is an eddy viscosity
chosen such that
the dissipation produced by $m^\Delta(\bar{u}^\Delta)$ is similar
to the one produced by $\tau^\Delta(u)$.
For the basic Smagorinsky
model~\cite{smagorinskyGeneralCirculationExperiments1963}, the viscosity is
\begin{equation}
    \nu^\Delta(\bar{u}^\Delta) \coloneq \theta^2 \Delta^2 | \partial_x \bar{u}^\Delta |,
\end{equation}
where $\theta > 0$ is a model parameter.

The approximate LES equation is
\begin{equation} \label{eq:cl-conventional-les}
    L(v^\Delta) = -\partial_x m^\Delta(v^\Delta)
\end{equation}
(see \cref{fig:threepaths}-G),
where $v^\Delta \approx \bar{u}^\Delta$ is the approximate LES solution.
It is in general different from $\bar{u}^\Delta$ since the closure $m^\Delta$
cannot be exact when information is lost in the filtering process.

To summarize, we used the following steps to derive a closed form 
for the approximate equation in LES:
\begin{enumerate}
    \item Apply the LES filter $f^\Delta$.
    \item Decompose the equation into a resolved part and a residual.
    \item Rewrite the residual in conservative form using the filter-swap property.
    \item Approximate the residual flux with a closure model.
\end{enumerate}
These steps are visualized in the route A$\to$D$\to$G in \cref{fig:threepaths}.

\Cref{eq:cl-conventional-les} still needs to be discretized.
The discretization we use is the FVM.
As we will see, the equations for the FVM can be derived in a similar way as for LES.

\subsection{The FVM through filter-swap}
\label{sec:fvm}

Let $h > 0$ denote the grid spacing of a uniform grid on $\Omega$.
The derivations in this section hold for all points $x \in \Omega$,
and we do not restrict any quantities to the grid points
until \cref{sec:burgers}.
For now, $h$ can be considered as a particular
length scale that is independent from the LES filter width $\Delta$ from
\cref{sec:les}.

\subsubsection{Numerical derivatives and fluxes}

For all $u \in U$ and $x \in \Omega$,
define the staggered central finite difference and
interpolation operators as
\begin{align}
    \label{eq:cl-finite-difference} 
    \partial^h_x u(x)
    & \coloneq \frac{1}{h} \left[
        u\left(x + \frac{h}{2}\right) -
        u\left(x - \frac{h}{2}\right)
    \right], \\
    \label{eq:cl-interpolator}
    \eta^h_x u(x)
    & \coloneq \frac{1}{2} \left[
        u\left(x - \frac{h}{2}\right) +
        u\left(x + \frac{h}{2}\right)
    \right].
\end{align}
These operators can be used to discretize the
conservation law \eqref{eq:conservation-law}.
If $r^h \approx r$ is a numerical flux, then
$\partial^h_x r^h \approx \partial_x r$ is an approximation of the flux term,
and $L^h \coloneq \partial_t + \partial^h_x r^h$ is a
(spatial) approximation of the conservation law.
For the Burgers equation
(defined by \cref{eq:burgers}),
we use the second-order accurate numerical flux
\begin{equation}
    \label{eq:burgers-discrete}
    r^h(u)
    \coloneq \frac{1}{2} \left(\eta^h_x u\right) \left(\eta^h_x u\right)
    - \nu \partial^h_x u.
\end{equation}

The operators $\partial^h_x : U \to U$, $\eta^h_x : U \to U$, and $r^h : U \to U$
are both continuous and discrete at the same time.
They are continuous in the sense that $\partial^h_x u(x)$ can be evaluated for all $x \in \Omega$.
They are discrete in the sense that $\partial^h_x u(x)$ only depends on $u(x \pm h / 2)$,
and for the numerical Burgers flux \eqref{eq:burgers-discrete},
$\partial^h_x r^h(u)(x)$ only depends on $u(x - h)$, $u(x)$, and $u(x + h)$.
We therefore say that these operators are \emph{grid-compatible} or
$h$-compatible.
See \ref{sec:grid-compatibility} for more details about
infinitesimal, discrete, continuous, and grid-compatible operators.

\subsubsection{The FVM grid filter}

The finite difference $\partial^h_x$ is closely related to the FVM grid filter
\begin{equation}
    f^h : U \to U, \ u \mapsto \bar{u}^h
\end{equation}
defined for all $x \in \Omega$ as
\begin{equation} \label{eq:cl-gridfilter}
    \bar{u}^h(x) \coloneq \frac{1}{h}
    \int_{x - h / 2}^{x + h / 2} u(y) \, \mathrm{d} y.
\end{equation}
This filter is sometimes called a ``top-hat filter'',
``Schumann's filter''~\cite{schumannSubgridScaleModel1975},
or ''volume-averaging filter'', since it averages $u$ over a control volume
$[x \pm h / 2]$. The FVM filter $f^h$ constitutes a particular choice of
convolutional LES filter $f^\Delta$, where the filter width $\Delta$ is equal
to the grid spacing $h$ used in the finite difference $\partial^h_x$. The
underlying top-hat kernel $G^h$ is
\begin{equation}
    G^h(x) \coloneq
    \begin{cases}
        \frac{1}{h} & \text{if } |x| \leq \frac{h}{2}, \\
        0 & \text{otherwise.}
    \end{cases}
\end{equation}
A filter width equal to the grid spacing is commonly the setting in implicit LES,
but we emphasize that here the FVM filter definition is known explicitly.

The infinitesimal operator $\partial_x$ is not grid-compatible.
By applying the FVM filter $f^h$, the derivative $\partial_x$ can be turned
into the grid-compatible finite difference $\partial^h_x$ through the
discrete filter-swap property
\begin{equation} \label{eq:cl-continuous-commutation}
    \partial^h_x = f^h \partial_x,
\end{equation}
which can also be written as
$\partial^h_x u = \overline{\partial_x u}^h$ for all $u \in U$.
The proof is given in \cref{th:commutation-fh}.
This property entails that the finite difference $\partial^h_x$
is equal to a filtered version of the exact derivative $\partial_x$,
as noted by Schumann and
others~\cite{schumannSubgridScaleModel1975,
rogalloNumericalSimulationTurbulent1984,
lundUseExplicitFilters2003}.
If we approximate an infinitesimal derivative $\partial_x$ by $\partial^h_x$, the
content of the derivative is implicitly filtered.
The FVM filter $f^h$ is therefore 
sometimes called a ``discretization-induced filter''
or an ``implicit filter'' induced by the choice of $\partial^h_x$.
We prefer to make this statement explicit by turning it around and
explicitly applying $f^h$ to a derivative $\partial_x$.
The finite difference $\partial^h_x$ is therefore a
``filter-induced discretization'' of the infinitesimal derivative $\partial_x$.

We see the property \eqref{eq:cl-continuous-commutation}
as an analogous version of the commutation property \eqref{eq:continuous-commutation}
for the finite difference operator $\partial^h_x$ and grid filter $f^h$.
The subtle, yet crucial, difference with \eqref{eq:continuous-commutation}
is that the left-hand side is no longer filtered and uses a discrete
differentiation operator.
An intuitive way to understand this property is through the use of a telescoping sum.
See \cref{fig:commutation} in \ref{sec:proofs} for a visualization
of how the inner terms in the integral defining $f^h$ cancel out in the
discrete filter-swap property.
Unlike the property \eqref{eq:continuous-commutation},
we have $\partial^h_x f^h \neq f^h \partial_x$
(see \cref{th:noncommutation-coarsegraining} for proof).

\subsubsection{FVM equations}

To obtain FVM equations, we use the same approach as we used for LES
in \cref{sec:les}:
\begin{enumerate}
    \item Apply the FVM filter $f^h$.
    \item Decompose the equation into a resolved part and a residual.
    \item Rewrite the residual in conservative form using the discrete
        filter-swap property.
    \item Approximate the residual flux with a closure model.
\end{enumerate}
These steps are visualized in the bottom route in \cref{fig:threepaths}.

Applying the FVM filter $f^h$ to \cref{eq:conservation-law} yields the FVM
equation
\begin{equation} \label{eq:cl-fh}
    \overline{L(u)}^h = 0,
\end{equation}
where $\bar{u}^h$ is the FVM solution that we intend to solve for.
While the classical LES formulation could be obtained by using the 
filter-swap property $f^h \partial_x = \partial_x f^h$,
our aim is to have an equation for $\bar{u}^h$ that involves the discrete
divergence $\partial^h_x$. This is achieved by applying the
discrete filter-swap property to \cref{eq:cl-fh}:
\begin{equation} \label{eq:cl-finite-volume}
    \partial_t \bar{u}^h + \partial^h_x r(u) = 0.
\end{equation}
This equation is more commonly written as
\begin{equation} \label{eq:cl-finite-volume-integral}
    h \partial_t \bar{u}^h(x) + r(u)\left(x + \frac{h}{2}\right) - r(u)\left(x - \frac{h}{2}\right) = 0,
\end{equation}
emphasizing that we have two unknown fluxes at the left and right
boundaries of the control volume $[x \pm h / 2]$.

\Cref{eq:cl-finite-volume-integral} is often derived from the integral form of
the conservation law \eqref{eq:conservation-law}:
\begin{equation}
    \frac{\mathrm{d}}{\mathrm{d} t} \int_V u \, \mathrm{d} V
    + \int_{\partial V} r(u) \cdot n \, \mathrm{d} S = 0,
    \quad \forall V \subset \Omega.
\end{equation}
In 1D, with
$V = [x \pm h / 2]$,
$\partial V = \{ x \pm h / 2 \}$, and
$n = \pm 1$,
this is exactly \cref{eq:cl-finite-volume-integral}.

Let $r^h : U \to U$ denote a grid-compatible numerical flux that approximates
the original flux $r$.
By adding the resolved term $\partial^h_x r^h(\bar{u}^h)$ to both sides of
\cref{eq:cl-finite-volume} and rearranging the terms, we get
\begin{equation} \label{eq:fvm}
    L^h(\bar{u}^h) = - \partial^h_x \tau^h(u)
\end{equation}
(see \cref{fig:threepaths}-B), where
$L^h \coloneq \partial_t + \partial^h_x r^h$ is a grid-compatible approximation
of the original conservation law operator $L$ and
\begin{equation} \label{eq:cl-tauh}
    \boxed{\tau^h(u) \coloneq r(u) - r^h(\bar{u}^h)}
\end{equation}
is the residual flux in the FVM equation.
Unlike the LES equation \eqref{eq:les}, the FVM equation \eqref{eq:fvm}
is ``$\partial^h_x$-conservative'', since the conservation law is 
written with respect to the discrete divergence $\partial^h_x$.
The residual flux $\tau^h(u)$ is also known as the \emph{sub-grid} flux.
Calling it a sub-filter flux would be misleading, since
it also depends on the choice of 
numerical divergence $\partial^h_x$ and
numerical flux $r^h$
(in addition to the FVM filter $f^h$).
We use the term \emph{residual flux} for both $\tau^\Delta(u)$ and $\tau^h(u)$.

By approximating the residual flux with a grid-compatible dissipation model
$m^h(\bar{u}^h) \approx \tau^h(u)$, we get the approximate FVM equation
\begin{equation} \label{eq:fvm-approximate}
    L^h(v^h) = -\partial^h_x m^h(v^h)
\end{equation}
(see \cref{fig:threepaths}-C),
where $v^h \approx \bar{u}^h$ is the approximate FVM solution.
In a properly resolved DNS, where $h$ is sufficiently small,
the dissipation model $m^h$ can be set to zero.

The operators $L^h$ and $m^h$ are still continuous and can be evaluated in
every point $x \in \Omega$.
``Fully discrete''
equations for $v^h$ can be
obtained by simply evaluating \cref{eq:fvm-approximate} at the grid points
$x^h_i \coloneq i h$, $i \in \{ 1, \dots, N \}$ on the condition that $h = \ell /
N$ for some integer $N \in \mathbb{N}$.
This restriction is visualized in \cref{fig:threepaths}-B and \cref{fig:threepaths}-C.
The grid-compatible numerical fluxes $r^h$ and $m^h$ ensure that the
restricted equations form a closed system of ordinary differential equations.

While \cref{eq:fvm} can be seen as a discrete LES equation,
the filter width is implied by the grid spacing $h$ used in the discrete
divergence $\partial^h_x$. In LES, we would like to be able to choose the
filter width $\Delta$ independently from $h$. We therefore
propose to combine the LES filter $f^\Delta$ with the FVM filter $f^h$.

\section{Combining LES and the FVM: LES-FVM}
\label{sec:les-fvm}

Classical LES and the FVM both aspire to correctly model the features of the
flow that are larger than a certain length scale.
In LES, the large scales are extracted using an
LES filter $f^\Delta$,
which retains the scales that are larger than the filter width $\Delta$.
In the FVM, the size of the resolved scales are inherently linked to the grid
size $h$ through the FVM filter $f^h$,
numerical flux $r^h$, and discrete divergence $\partial^h_x$.
The fluxes in the LES equation \eqref{eq:les} are defined with respect to the
infinitesimal divergence $\partial_x$,
but the fluxes in FVM equation \eqref{eq:fvm} are
defined with respect to the discrete divergence $\partial^h_x$.

In this section, we present a new discrete LES
formalism---``LES-FVM''---that
bridges the gap between infinitesimal LES and the discrete FVM.
In LES-FVM, the LES filter is applied to selectively extract the scales of
interest, while the FVM filter is applied to make the divergence
grid-compatible.
We first show the classical approach in which one approximates before
discretizing (top route in \cref{fig:threepaths}), and then we propose our new
approach in which we discretize first (middle route in \cref{fig:threepaths}).

\subsection{The classical approach: LES first, then FVM}
\label{sec:cl-classical-discrete-les}

After closure, the approximate LES equations are
$L(v^\Delta) = - \partial_x m^\Delta(v^\Delta)$ (see \cref{fig:threepaths}-G).
Applying the FVM filter $f^h$,
filter-swap property,
adding the resolved term $\partial^h_x r^h(\overline{v^\Delta}^h)$,
and rearranging the terms gives
\begin{equation} \label{eq:classical-discrecte-les-exact}
    L^h\left(\overline{v^\Delta}^h\right) =
    - \partial^h_x \left( m^\Delta(v^\Delta) + \tau^h(v^\Delta) \right)
\end{equation}
(see \cref{fig:threepaths}-H),
where $\overline{v^\Delta}^h$ is the FVM average of the LES approximation
$v^\Delta$.
This equation is unclosed in terms of $\overline{v^\Delta}^h$ since
it contains the field $v^\Delta$ in the right-hand side flux
$m^\Delta(v^\Delta) + \tau^h(v^\Delta)$. Note that $\tau^h(v^\Delta)$ is the
residual FVM flux from \cref{eq:cl-tauh} applied to the LES
approximation $v^\Delta$ instead of $u$.

By using a grid-compatible closure model
$m^{\Delta, h}(\overline{v^\Delta}^h) \approx m^\Delta(v^\Delta) +
\tau^h(v^\Delta)$, we get the approximate equation
\begin{equation} \label{eq:classical-discrecte-les}
    L^h(w^{\Delta, h}) = - \partial^h_x m^{\Delta, h}(w^{\Delta, h})
\end{equation}
(see \cref{fig:threepaths}-I),
where $w^{\Delta, h} \approx \overline{v^\Delta}^h$
is the approximate solution.
We call $m^{\Delta, h}$ a \emph{closure} since it approximates an unresolved
term and thus makes the equation closed. Typically, $m^{\Delta, h}$ is
just a ``discretization'' of the infinitesimal LES closure $m^\Delta$,
but the expression $m^\Delta(v^\Delta) + \tau^h(v^\Delta)$
suggests that $m^{\Delta, h}$ should also account for
the FVM discretization error $\tau^h(v^\Delta)$.
This classical approach involves multiple steps:
close first, then discretize,
and then adjust the closure to account for the
discretization (as visualized in the top route of \cref{fig:threepaths}).

We now show what this classical approach would entail for the 
basic Smagorinsky closure model.
The LES closure
\begin{equation}
    m^\Delta(\bar{u}^\Delta) \coloneq -\theta^2 \Delta^2
    \left| \partial_x \bar{u}^\Delta \right| \partial_x \bar{u}^\Delta
\end{equation}
is used for the LES flux $\tau^\Delta(u)$, while its FVM variant
\begin{equation}
    m^h(\bar{u}^h) \coloneq -\theta^2 h^2
    \left| \partial^h_x \bar{u}^h \right| \partial_x \bar{u}^h,
\end{equation}
is used for the FVM flux $\tau^h(u)$.
A natural choice for $m^{\Delta, h}$ is then
\begin{equation}
    m^{\Delta, h}(\overline{v^\Delta}^h) \coloneq
    -\theta^2 (\Delta^2 + h^2)
    \left| \partial^h_x \overline{v^\Delta}^h \right|
    \partial^h_x \overline{v^\Delta}^h.
\end{equation}
The term
$\theta^2 \Delta^2 |\partial^h_x \cdot | \partial^h_x \cdot$
is a discretization of $m^\Delta$, while
$\theta^2 h^2 |\partial^h_x \cdot | \partial^h_x \cdot$
is a discrete Smagorinsky model for the residual FVM flux $\tau^h$.
The final model $m^{\Delta, h}$ takes the form of a discrete Smagorinsky model 
for a filter of width $\sqrt{\Delta^2 + h^2}$.
The incorporation of $h$ into the filter width can be seen as an adjustment
to account for discretization error $\tau^h(v^\Delta)$.

Interestingly,
if the LES filter $f^\Delta$
is a Gaussian or top-hat filter,
then $\sqrt{\Delta^2 + h^2}$
is the width of the LES-FVM \emph{double filter}
\footnote{
    Note that double filters in LES are also used for dynamically determining
    closure model coefficients by applying a test-filter on top
    of the LES filter~\cite{germanoDynamicSubgridscaleEddy1991}.
    Our double filter serves a different purpose:
    to filter out scales smaller than $\Delta$
    while also coarse-graining the differential operators
    so that they become compatible with a grid of spacing $h$.
}
\begin{equation}
    f^{\Delta, h} \coloneq f^h f^\Delta
\end{equation}
composed of the LES filter $f^\Delta$ and the FVM filter $f^h$.
The field $w^{\Delta, h}$ can therefore be seen as a
Smagorinsky approximation of the LES-FVM solution
$\bar{u}^{\Delta, h} \coloneq f^{\Delta, h} u$.

In the next section,
we propose to approximate $\bar{u}^{\Delta, h}$ directly, using one
single approximation step $v^{\Delta, h} \approx \bar{u}^{\Delta, h}$,
instead of performing successive approximations
$v^\Delta \approx \bar{u}^\Delta$ and
$w^{\Delta, h} \approx \overline{v^\Delta}^h$.

\subsection{Our new approach: LES-FVM}
\label{sec:les-fvm-discretize-first}

We discretize the infinitesimal LES equation
$\overline{L(u)}^\Delta = 0$ by applying the FVM filter $f^h$.
This gives the double filtered conservation law
\begin{equation}
    \overline{L(u)}^{\Delta, h} = 0.
\end{equation}
By using the filter-swap property,
adding the resolved term $\partial^h_x r^h(\bar{u}^{\Delta, h})$ to both
sides, and rearranging the terms,
we get an equation for $\bar{u}^{\Delta, h}$ in
$\partial^h_x$-conservative form:
\begin{equation} \label{eq:cl-les-fvm}
    L^h(\bar{u}^{\Delta, h}) = -\partial^h_x \tau^{\Delta, h}(u)
\end{equation}
(see \cref{fig:threepaths}-E), where
\begin{equation} \label{eq:cl-tau-Delta-h}
    \boxed{\tau^{\Delta, h}(u) \coloneq \overline{r(u)}^\Delta - r^h(\bar{u}^{\Delta, h})}
\end{equation}
is the residual flux in the LES-FVM equation.
This is a novel residual flux expression that accounts for the errors in both
LES and the FVM. The first term only uses the LES filter $f^\Delta$ and the
infinitesimal flux $r$, while the second term uses the LES-FVM filter
$f^{\Delta, h}$ and the numerical flux $r^h$.

Using a discretization-informed grid-compatible closure model
$m^{\Delta, h}(\bar{u}^{\Delta, h}) \approx \tau^{\Delta, h}(u)$ gives
the approximate LES-FVM equation
\begin{equation} \label{eq:cl-discrete-les-model}
    L^h\left(v^{\Delta, h}\right) =
    - \partial^h_x m^{\Delta, h}\left(v^{\Delta, h}\right)
\end{equation}
(see \cref{fig:threepaths}-F),
where $v^{\Delta, h} \approx \bar{u}^{\Delta, h}$
is the approximate LES-FVM solution.
Although the equation for $v^{\Delta, h}$, obtained by
``discretizing first, then approximating'',
has the same form as the equation for $w^{\Delta, h}$ in
\eqref{eq:classical-discrecte-les},
obtained through multiple alternating filtering and approximation steps,
it leads to a different solution because the closure is chosen differently.
In \cref{eq:classical-discrecte-les},
$m^{\Delta, h}$ is chosen such that
$w^{\Delta, h} \approx \overline{v^\Delta}^h$,
which depends on a previous approximation
$v^\Delta \approx \bar{u}^\Delta$.
In \cref{eq:cl-discrete-les-model}, $m^{\Delta, h}$ is chosen such that 
$v^{\Delta, h} \approx \bar{u}^{\Delta, h}$ directly.

Having defined our new LES-FVM framework,
we now proceed to discuss its implications.

\subsection{Control over the LES and FVM filter widths}
\label{sec:filter-widths}

For $f^{\Delta, h}$, we are free to choose $\Delta$ and $h$ independently.
Choosing $\Delta \gg h$ would result in a grid that resolves scales
that are not present in the LES solution $\bar{u}^{\Delta}$, leading to
unnecessary computational costs.
This setting is sometimes referred to as
\emph{explicit LES}, where the effects of the discretization can be neglected.
The advantage is that the classical LES framework can be used with small
discretization errors.
In this case, we have $\tau^{\Delta, h}(u) \approx \tau^\Delta(u)$.
On the other hand, if $\Delta = 0$, we have $f^{\Delta, h} = f^h$, and we
recover the classical FVM setting.
This setting is sometimes referred to as \emph{implicit LES},
where no explicit LES filter $f^\Delta$ is used.
In literature, common choices for the ratio are $0 \leq \Delta / h \leq
4$~\cite{ghosalAnalysisNumericalErrors1996,
geurtsNumericallyInducedHighpass2005}.
In this range, the effect of the FVM filter is equal to or somewhat
smaller than the effect of the LES filter.
It is generally acknowledged that the ratio $\Delta / h$ should ideally be
higher to prevent interference from the discretization in the residual
(for example $\Delta / h \geq 4$ for a second-order accurate discretization
\cite{chowFurtherStudyNumerical2003} or
$\Delta / h \geq 2$ for a sixth-order accurate discretization \cite{vremanLargeeddySimulationTurbulent1997}).

Instead of circumventing the problem of the discretization in LES
by choosing $\Delta \gg h$,
we aim to allow for smaller ratios $\Delta / h$ by
explicitly accounting for the discretization effects in our new residual flux
$\tau^{\Delta, h}(u)$.
In our new LES-FVM framework,
all values of the ratio $\Delta / h$ are valid choices.

\begin{figure*}
    \centering
    \includegraphics[width=0.9\textwidth]{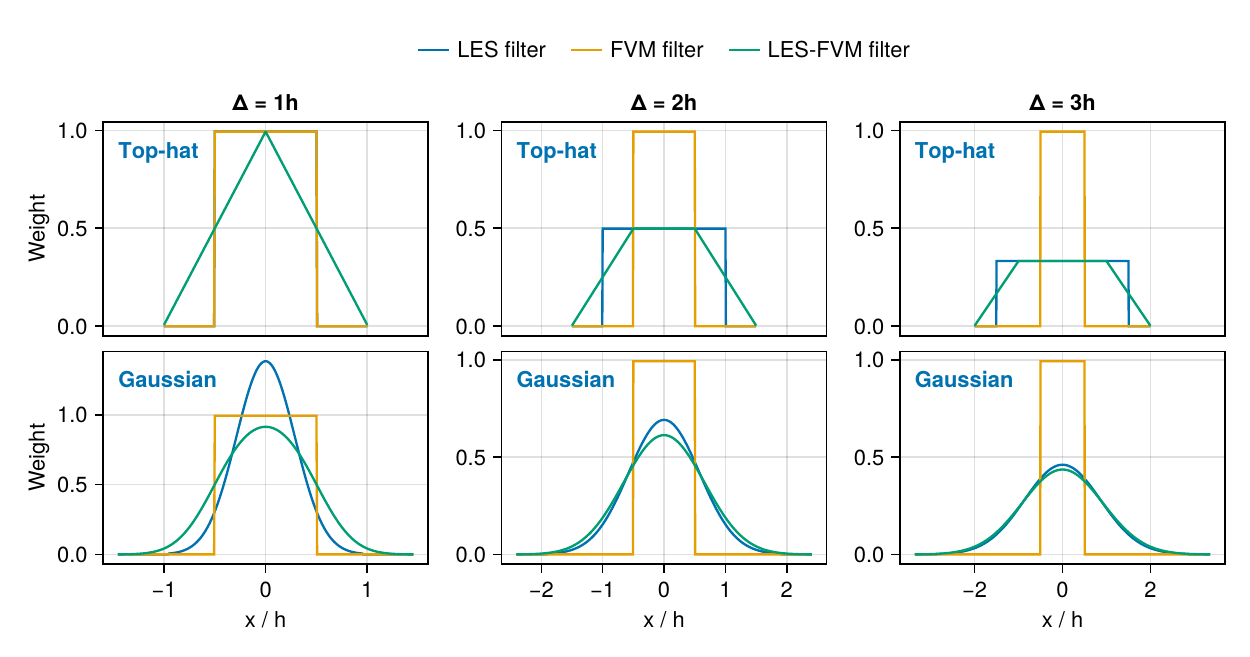} 
    \caption{
        Spatial kernels of LES filter $f^\Delta$ (top-hat and Gaussian),
        FVM grid filter $f^h$ (always top-hat), and
        LES-FVM double filter $f^{\Delta, h}$
        for $\Delta \in \{ 1 h, 2 h, 3 h\}$.
        In the top-left plot, we have $f^\Delta = f^h$.
    }
    \label{fig:filters}
\end{figure*}

In \cref{fig:filters},
we show the filter kernels
$G^\Delta$, $G^h$, and $G^{\Delta, h}$ corresponding to the filters
$f^\Delta$, $f^h$, and $f^{\Delta, h}$ for
top-hat and Gaussian LES kernels
\begin{align}
    G^\Delta_\text{top-hat}(x)
    & \coloneq
    \begin{cases}
        \frac{1}{\Delta} & \text{if } |x| \leq \frac{\Delta}{2}, \\
        0 & \text{otherwise},
    \end{cases}, \\
    \label{eq:gaussian}
    G^\Delta_\text{Gaussian}(x)
    & \coloneq
    \sqrt{\frac{6}{\pi \Delta^2}} \exp\left(-\frac{6 x^2}{\Delta^2}\right).
\end{align}

The LES filter widths are $\Delta \in \{ 1 h, 2 h, 3 h\}$.
For top-hat $G^\Delta$ with $\Delta = h$, we have $G^\Delta = G^h$,
and $G^{\Delta, h}$ becomes a triangular kernel.
For top-hat $G^\Delta$ with $\Delta > h$, the double filter kernel
$G^{\Delta, h}$ takes the shape of a trapezoid,
with $G^{\Delta, h}(x) = G^{\Delta}(x)$ for $|x| \leq (\Delta - h) / 2$.
For Gaussian $G^\Delta$,
the double filter kernel $G^{\Delta, h}$
resembles a Gaussian kernel of width $\sqrt{\Delta^2 + h^2}$.
However, $G^{\Delta, h}$ is not exactly Gaussian.
As the ratio $\Delta / h$ grows, $G^\Delta$ and $G^{\Delta, h}$
become more and more similar.
For the ratios shown, $G^\Delta$ and $G^{\Delta, h}$ are still visually
different.

\begin{figure*}
    \centering
    \includegraphics[width=0.9\textwidth]{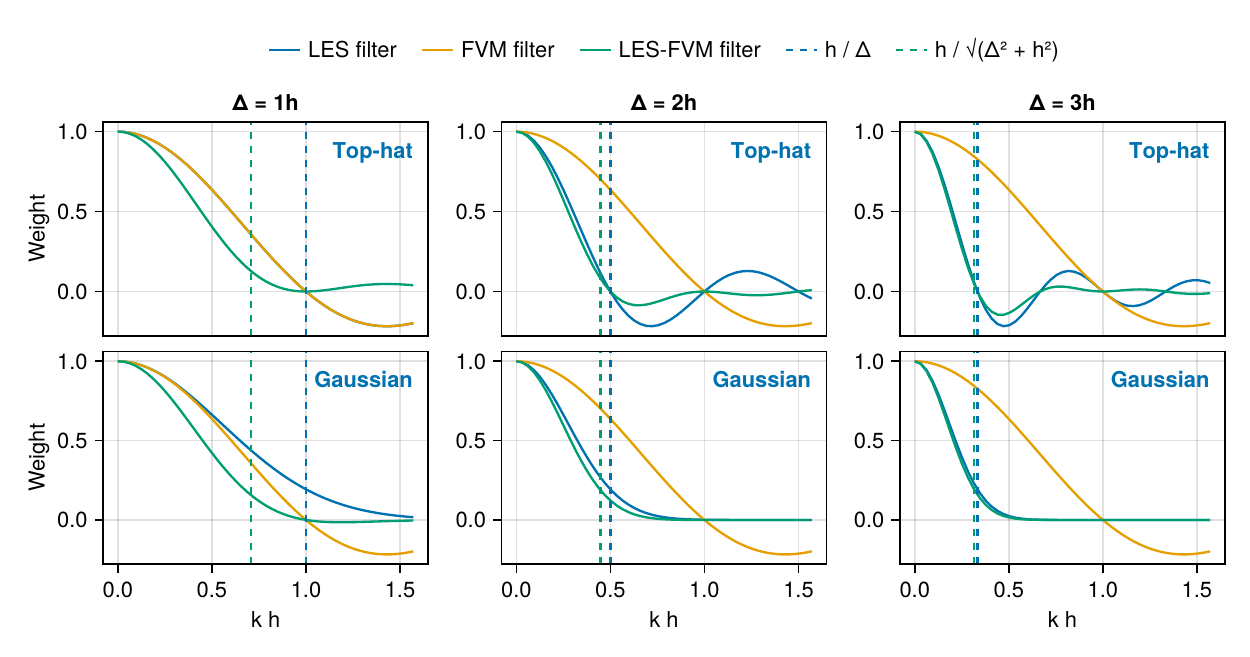} 
    \caption{
        Spectral transfer functions of LES filter $f^\Delta$ (top-hat and Gaussian),
        FVM grid filter $f^h$ (always top-hat), and
        LES-FVM double filter $f^{\Delta, h}$
        for $\Delta \in \{ 1 h, 2 h, 3 h\}$.
        In the top-left plot, we have $f^\Delta = f^h$.
    }
    \label{fig:filter-spectra}
\end{figure*}

For the top-hat and Gaussian LES filters, the spectral transfer functions are
defined as \cite{popeTurbulentFlows2000}
\begin{align}
    \hat{G}^\Delta_\text{top-hat}(\kappa)
    & \coloneq \frac{\sin(\kappa \Delta / 2)}{\kappa \Delta / 2}, \\
    \hat{G}^\Delta_\text{Gaussian}(\kappa)
    & \coloneq \exp\left(-\frac{\kappa^2 \Delta^2}{24}\right),
\end{align}
where $\kappa \coloneq 2 \pi k$ for $k \in \mathbb{Z}$.
Interestingly, they both give the same Taylor series expansion up to second-order around $\kappa = 0$:
\begin{align}
    \hat{G}^\Delta_\text{top-hat}(\kappa)
    & = 1 - \frac{\kappa^2 \Delta^2}{24} + \mathcal{O}(\kappa^4), \\
    \hat{G}^\Delta_\text{Gaussian}(\kappa)
    & = 1 - \frac{\kappa^2 \Delta^2}{24} + \mathcal{O}(\kappa^4).
\end{align}
For the double-filter $f^{\Delta, h}$, we therefore get the expansion
\begin{equation}
    \hat{G}^{\Delta, h}(\kappa)
    = \hat{G}^\Delta(\kappa) \hat{G}^h(\kappa)
    = 1 - \frac{\kappa^2 (\Delta^2 + h^2)}{24} + \mathcal{O}(\kappa^4)
\end{equation}
for both the top-hat and Gaussian LES kernels.
By comparing the second-order terms, we can make the argument that
$\sqrt{\Delta^2 + h^2}$ is indeed the width of the LES-FVM double filter.

The spectral transfer functions $\hat{G}^\Delta$, $\hat{G}^h$, and $\hat{G}^{\Delta, h}$ are shown in
\cref{fig:filter-spectra} as a function of the wavenumber $k \in \mathbb{Z}$.
As the ratio $\Delta / h$ grows, the transfer functions $\hat{G}^\Delta(\kappa)$ and
$\hat{G}^{\Delta, h}(\kappa) = \hat{G}^\Delta(\kappa) \hat{G}^h(\kappa)$ become more and more similar.
Their respective filter widths (represented by vertical dashed lines) also get
closer. For $\Delta = h$, $\hat{G}^{\Delta, h}(\kappa)$
is quite different from $\hat{G}^\Delta(\kappa)$ and $\hat{G}^h(\kappa)$,
and dampens at much lower wavenumbers.

\subsection{Interpretation in terms of existing residual flux expressions}

The residual LES-FVM flux
$\tau^{\Delta, h}$
can be decomposed in two ways in order to relate it to the classical residual
LES flux $\tau^\Delta$. The first decomposition is
\begin{equation}
    \tau^{\Delta, h}(u) = \tau^\Delta(u) + \tau^h(\bar{u}^\Delta),
\end{equation}
meaning that the FVM step induces an additional residual flux
$\tau^h(\bar{u}^\Delta)$ in addition to the residual LES flux
$\tau^\Delta(u)$.
The second decomposition is obtained by writing $\tau^{\Delta, h}$ in terms of
the classical residual LES flux \emph{for the double filter $f^{\Delta, h}$}:
\begin{equation} \label{eq:tau-Delta-h-decomposition}
    \tau^{\Delta, h}(u) =
    \tau^{\Delta, h}_\text{classic}(u) +
    \tau^{\Delta, h}_\text{flux}(u) +
    \tau^{\Delta, h}_\text{div}(u),
\end{equation}
where
\begin{align}
    \tau^{\Delta, h}_\text{classic}(u)
    & \coloneq \overline{r(u)}^{\Delta, h} - r(\bar{u}^{\Delta, h}), \\
    \tau^{\Delta, h}_\text{flux}(u)
    & \coloneq r(\bar{u}^{\Delta, h}) - r^h(\bar{u}^{\Delta, h}), \\
    \tau^{\Delta, h}_\text{div}(u)
    & \coloneq \overline{r(u)}^\Delta - \overline{r(u)}^{\Delta, h}
\end{align}
are the classical residual LES flux for the double filter $f^{\Delta, h}$,
a discretization error from the numerical flux $r^h \approx r$, and
a discretization error from the discrete divergence
$\partial^h_x \approx \partial_x$, respectively.
Due to the filter-swap property,
$\tau^{\Delta, h}_\text{div}(u)$ can be written as a \emph{high-pass grid filter} $1 - f^h$
applied to the LES flux $\overline{r(u)}^\Delta$.
A similar observation was made by Geurts and van der
Bos~\cite{geurtsNumericallyInducedHighpass2005}.
Our residual flux $\tau^{\Delta, h}(u)$ still has two
important differences from the expression of Geurts and van der Bos.
First, our residual flux contains the numerical flux $r^h$ in addition to the
exact flux $r$.
Second, we define the residual flux with respect to the
\emph{discrete} divergence $\partial^h_x$
(through the filter-swap property),
whereas Geurts and van der Bos defined their residual flux with
respect to the \emph{infinitesimal} divergence $\partial_x$
(through the \emph{reverse} filter-swap property).

For the FVM,
Winckelmans~\cite{winckelmansExplicitfilteringLargeeddySimulation2001},
Denaro~\cite{denaroWhatDoesFinite2011,denaroInconsistencefreeIntegralbasedDynamic2018}
and Verstappen~\cite{verstappenMergingFilteringModeling2025} derived a similar
expression for the residual flux as
$\tau^h$ in \cref{eq:cl-tauh},
where only one of the two terms involve the filter.
In our notation, their residual flux resembles $r(u) - r(\bar{u}^h)$, with $r$
instead of $r^h$ in the resolved term.
Our expression $\tau^h$ is different in the sense that it also accounts for the
numerical flux $r^h$. For the Burgers flux \eqref{eq:burgers-discrete}, $r^h$
includes the interpolation and finite difference operators $\eta^h_x$ and
$\partial^h_x$.
Verstappen instead considered the interpolation
$\eta^h_x$ explicitly, as a second filter. He analyzed the filtered equations in
terms of the ``$1 h$-filter'' $f^h$ and the ``$2 h$-filter'' $\eta^h_x f^h$
(their filter widths are $h$ and $2 h$ respectively).
This lead to an orthogonal decomposition of the energy spectrum into three
parts: a resolved part, a sub-$2 h$-filter part, and a sub-$1 h$-filter part.

\subsection{Can we discretize without the FVM filter?}

We can write discrete equations for $\bar{u}^\Delta$ with a resolved and
unresolved part without
applying the FVM filter $f^h$. 
For example, the LES equation $\overline{L(u)}^\Delta = 0$
can be rewritten as
\begin{equation}
    L^h(\bar{u}^\Delta) = -(\partial_x \overline{r(u)}^\Delta - \partial^h_x r^h(\bar{u}^\Delta)).
\end{equation}
The left-hand side is grid-compatible, and if the right-hand side is
approximated by a grid-compatible closure model we do indeed get a discrete
approximation of $\bar{u}^\Delta$ without applying the FVM filter.
However, the residual in the right-hand side cannot be written as a discrete
flux term (unless $f^\Delta$ is the FVM filter $f^h$),
so this equation is not $\partial^h_x$-conservative.
To obtain a discrete conservation law, the FVM filter must be applied.

\subsection{Closure modeling implications}
\label{sec:les-fvm-closure-implications}

In LES, it is common to model the residual flux $\tau^\Delta(u)$
with artificial dissipation, as in eddy viscosity models such as the
Smagorinsky model \eqref{eq:smagorinsky}. For the FVM,
it is common to incorporate artificial numerical dissipation directly into the
numerical flux $r^h$~\cite{dairayNumericalDissipationVs2017},
notably to prevent oscillations around
sharp gradients and shocks~\cite{jamesonConstructionDiscretelyConservative2008}.
When used to model subgrid scale effects, this approach is called
\emph{implicit} LES.
Dissipation can also be explicitly added through a dissipative flux
$m^h(\bar{u}^h) \approx \tau^h(u)$ on top of a
DNS-like flux $r^h(\bar{u}^h)$ that has low numerical dissipation.

In our combined LES-FVM framework,
the closure $m^{\Delta, h}$ should account for both LES and FVM effects.
The advantage of including the numerical flux $r^h$ in the definition of
$\tau^{\Delta, h}$ is that the implicit dissipation produced by $r^h$
is automatically accounted for in $\tau^{\Delta, h}$.
This allows for better determining how much additional dissipation is
required by the explicit closure
$m^{\Delta, h}(\bar{u}^{\Delta, h}) \approx \tau^{\Delta, h}(u)$
\cite{furebyLargeEddySimulation2002}.
If $m^{\Delta, h}$ is a Smagorinsky model, then the Smagorinsky constant can be
tuned so that $m^{\Delta, h}(\bar{u}^{\Delta, h})$ and $\tau^{\Delta, h}(u)$
have similar dissipation profiles for a few reference snapshots $u$ obtained
from DNS.
If $r^h$ is a low-dissipation flux, such as the central difference based
Burgers flux \eqref{eq:burgers-discrete}, the resulting Smagorinsky constant
would be higher.
If $r^h$ already contains implicit numerical dissipation
(as is common for upwinding fluxes), then the resulting Smagorinsky constant
would be lower, since less additional explicit dissipation would be required.

In the classical LES-FVM approach ``approximate first, then discretize'',
the LES closure $m^\Delta$ is in principle made without knowledge of the
discretization effects. As a remedy, information about the discretization is
loosely reinjected into the definition of $m^\Delta$ by relating $\Delta$ to
the grid size $h$.

Next, we turn to the Burgers equation to illustrate the consequences of the new
LES-FVM framework.
In \cref{sec:burgers-dns-aid},
we investigate how the definition of the residual flux affects the LES-FVM
equation. In \cref{sec:burgers-smagorinsky}, we investigate how the choice of
``approximate first'' vs ``discretize first'' leads to different closure
models in practice.

\section{LES-FVM for Burgers' equation} \label{sec:burgers}

We first consider the one-dimensional viscous Burgers' equation defined by the
flux \eqref{eq:burgers}.
This equation can be seen a simplified version of the
Navier-Stokes equations, without the pressure term.
In the inviscid limit, the solution forms shocks,
which can cause oscillations with a coarse grid
discretization with a low-dissipation numerical flux such as the one in
\cref{eq:burgers-discrete}~\cite{jamesonConstructionDiscretelyConservative2008}.
These issues also persist in the viscous case if the grid is too coarse.
This is why discretization-aware closure models are needed.

To assess the correctness of the LES-FVM framework,
we require reference solutions to Burgers' equation.
We use DNS to approximate
the reference solution $u$ and all derived quantities
(such as $\bar{u}^{\Delta, h}$ and $\tau^{\Delta, h}(u)$)
on a DNS grid of sufficiently small grid spacing $h_\text{DNS} \ll h$.
In \ref{sec:two-grids},
we show how the quantities in our LES-FVM framework
can be computed from DNS data in a consistent manner
(such that the filter-swap property stills holds).
This step involves discretizing the two infinitesimal filters
$f^\Delta$ and $f^h$ on the DNS grid.
It also adds the constraint that the refinement factor $h / h_\text{DNS}$
must be odd.
Since the notation required to keep track of quantities related
to both the fine $h_\text{DNS}$-grid and coarse $h$-grid becomes quite verbose,
we will keep the infinitesimal notation for the
reference solution $u$ in the following,
even though it is approximated on the
$h_\text{DNS}$-grid in the code.
Note also that $h$ here refers to the coarse LES-FVM grid spacing,
while in \ref{sec:two-grids}, $h$ refers to the fine DNS grid spacing.

\subsection{Simulation setup}

We use the following unitless parameters.
The domain size is $\ell \coloneq 2 \pi$.
The viscosity is $\nu \coloneq 5 \times 10^{-4}$.
The numerical flux is the central difference based flux in
\cref{eq:burgers-discrete}.
The grid spacings are $h \coloneq \ell / N$ and
$h_\text{DNS} \coloneq \ell / N_\text{DNS}$.
We consider one DNS grid size $N_\text{DNS} \coloneq 13500$
and multiple LES grid sizes
$N \in \{ 300, 900, 2700 \}$.
The compression factors are $N_\text{DNS} / N \in \{ 45, 15, 5 \}$ respectively.
These resolutions give an odd refinement factor for
multiple LES grid sizes at a fixed DNS grid size.
The filter-swap compatibility between the DNS and LES grids requires
$N_\text{DNS} / N$ to be an odd integer (see \ref{sec:two-grids}).
The LES filter kernel is the Gaussian \eqref{eq:gaussian}.
Unless otherwise stated, the LES filter width is $\Delta = 2 h$.

The initial conditions for $u$ are prescribed through the Fourier coefficients
\begin{equation}
\begin{split}
    \hat{a}(k) & \coloneq \left(\frac{k}{k_0}\right)^4
    \exp\left(-2 \left(\frac{k}{k_0}\right)^2 + 2 \pi \mathrm{i} \epsilon_k \right),
\end{split}
\end{equation}
where $\widehat{(\cdot)}$ is the Fourier transform,
$k \in \mathbb{Z}$ is the wavenumber,
$\mathrm{i}$ is the imaginary unit,
$k_0 = 10$ is the peak wavenumber,
and $\epsilon_k \sim \mathcal{U}(0, 1)$
is a uniformly sampled random number between $0$ and $1$ for $k \geq 0$ and
$\epsilon_{k} = -\epsilon_{-k}$ otherwise.
The velocity field is then normalized as
\begin{equation}
    u \coloneq \sqrt{2 E_0} a / \| a \|_\Omega,
\end{equation}
where $E_0 \coloneq 2$ is the chosen initial total energy and
$\| a \|^2_\Omega \coloneq \int_\Omega |a|^2 \, \mathrm{d} x$.
This gives the initial energy spectrum profile
$|\hat{u}(k)|^2 \propto k^4 \mathrm{e}^{-2 (k / k_0)^2}$
which is commonly used for decaying turbulence
problems~\cite{
sanHighorderMethodsDecaying2012,
maulikExplicitImplicitClosures2018,
verstappenMergingFilteringModeling2025}.
In total, we generate $1000$ different initial conditions
$u \in \mathcal{U}_\text{initial}$.

\subsection{DNS-aided LES-FVM}
\label{sec:burgers-dns-aid}

To evaluate the correctness of different residual fluxes, we employ
the ``DNS-aided LES(-FVM)''-framework of Bae and
Lozano-Duran~\cite{baeExactSubgridscaleModels2017,baeNumericalModelingError2022}.
It consists of running a DNS alongside an LES-FVM, where the DNS solution is
used to compute the closure term used in the LES-FVM equation. Ideally, by
using the DNS solution to compute the right-hand side, one would expect to be
able to recover the filtered DNS solution with the DNS-aided LES-FVM.
We here solve the coupled DNS/LES-FVM equations
\begin{equation}
    L(u) = 0, \quad
    L^h(v^{\Delta, h}) = -\partial^h_x \tau^{\Delta, h}_\text{model}(u),
\end{equation}
where $\tau^{\Delta, h}_\text{model}(u)$ is a ``closure'' model
computed from the DNS solution $u$.
This term does not depend on the LES-FVM approximation $v^{\Delta, h}$.
The LES-FVM approximation is initialized with
$v^{\Delta, h} = \bar{u}^{\Delta, h}$.

We test four different expressions for $\tau^{\Delta, h}_\text{model}$:
\begin{align}
    \tau^{\Delta, h}_\text{no-model}(u)
    & \coloneq 0, \\
    \tau^{\Delta, h}_\text{classic}(u)
    & \coloneq
    \overline{r(u)}^{\Delta, h} - r(\bar{u}^{\Delta, h}), \\
    \tau^{\Delta, h}_\text{classic+flux}(u)
    & \coloneq
    \overline{r(u)}^{\Delta, h} - r^h(\bar{u}^{\Delta, h}), \\
    \tau^{\Delta, h}_\text{classic+flux+div}(u)
    & \coloneq
    \overline{r(u)}^\Delta - r^h(\bar{u}^{\Delta, h}).
\end{align}
From \cref{eq:cl-tau-Delta-h} and
the decomposition \eqref{eq:tau-Delta-h-decomposition},
we know that
\begin{equation}
    \tau^{\Delta, h}_\text{classic+flux+div} =
    \tau^{\Delta, h} =
    \tau^{\Delta, h}_\text{classic} +
    \tau^{\Delta, h}_\text{flux} +
    \tau^{\Delta, h}_\text{div}
\end{equation}
is the correct expression.
The two other non-zero models are obtained by neglecting the
discretization errors:
$\tau^{\Delta, h}_\text{classic}$ neglects both the divergence error
$\tau^{\Delta, h}_\text{div}$
and the
numerical flux error
$\tau^{\Delta, h}_\text{flux}$,
while $\tau^{\Delta, h}_\text{classic+flux} =
\tau^{\Delta, h}_\text{classic} + \tau^{\Delta, h}_\text{flux}$ only neglects
the divergence error $\tau^{\Delta, h}_\text{div}$.

Both $u$ and $v^{\Delta, h}$ are advanced forward in time using the
forward-Euler scheme
\begin{align}
    u_{k + 1} & \coloneq u_k - \Delta t_k \partial_x r(u_k) \\
    v^{\Delta, h}_{k + 1} & \coloneq v^{\Delta, h}_k -
    \Delta t_k \partial^h_x \left( r^h(v^{\Delta, h}_k) + \tau^{\Delta, h}_\text{model}(u_k) \right)
\end{align}
where $u_k$ and $v^{\Delta, h}_k$ denote the forward-Euler approximations
to the DNS and LES-FVM solutions at time
$t_k \coloneq \sum_{i = 0}^{k - 1} \Delta t_i$.
The time step
\begin{equation}
    \Delta t_k \coloneq C \times \min \left( \frac{h_\text{DNS}}{\max_{x \in \Omega} | u_k(x) | }, \frac{h_\text{DNS}^2}{\nu} \right)
\end{equation}
is chosen based on the CFL condition for $u_k$ with $C = 0.4$. Both equations
use the same time step. While the time stepping scheme is of first order
accuracy only, it has the advantage of only requiring one evaluation of the
right-hand side per time step. This simplifies the injection procedure, where
the residual flux from DNS is injected into the LES-FVM equation. It
would be more complicated to inject the DNS solution into the LES-FVM
equation for higher order schemes with multiple stages per time step.

\begin{figure}
    \centering
    \includegraphics[width=0.75\columnwidth]{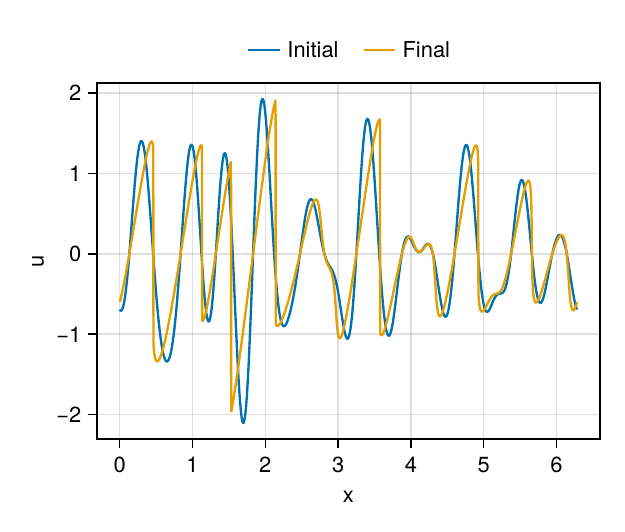} 
    \caption{Initial and final solution to the Burgers equation.}
    \label{fig:burgers_solution}
\end{figure}

In \cref{fig:burgers_solution}, we show the DNS solution for one initial
condition at initial and final time ($t = 0.1$).
The solution is slightly damped due to dissipation,
and some shock-like structures are forming.
These sharp gradients cannot be properly resolved on the coarse $h$-grid
and cause oscillations with the central difference $\partial^h_x$,
which is why a discretization-informed closure model is needed.

\begin{figure}
    \centering
    \includegraphics[width=\columnwidth]{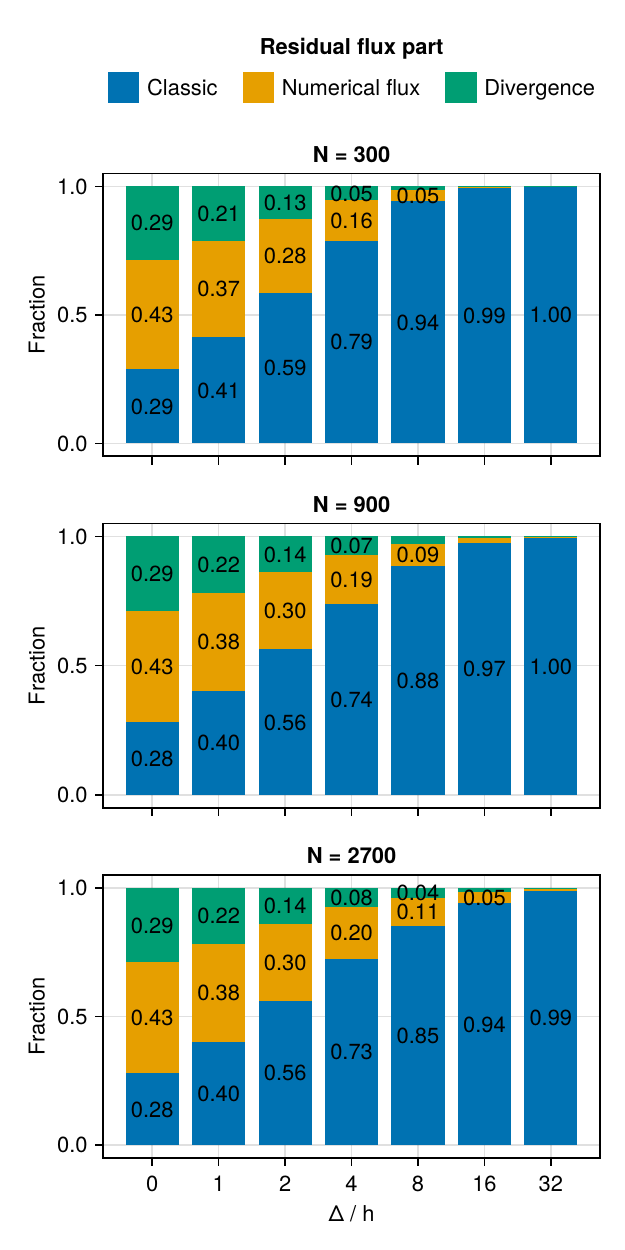} 
    \caption{
        Relative contribution of the different flux parts in the decomposition
        \eqref{eq:tau-Delta-h-decomposition}.
    }
    \label{fig:burgers-fractions}
\end{figure}

In \cref{fig:burgers-fractions}, we show the average fractions
\begin{equation}
    \frac{1}{1000} \sum_{u \in \mathcal{U}_\text{final}}
    \frac{
        \left| \tau^{\Delta, h}_\text{label}(u) \right|
    }{
        \left| \tau^{\Delta, h}_\text{classic}(u) \right| + 
        \left| \tau^{\Delta, h}_\text{flux}(u) \right| + 
        \left| \tau^{\Delta, h}_\text{div}(u) \right|
    }
\end{equation}
for $\text{label} \in \{ \text{classic}, \text{flux}, \text{div} \}$, where
$\mathcal{U}_\text{final}$ contains the DNS solutions at the final time.
For $\Delta = 0$, the classic flux only accounts for about $28 \%$ of the total
residual flux.
For $\Delta = 4 h$, the classic flux accounts for about $73-79 \%$ of the total.
For $\Delta = 32 h$, less than $1 \%$ of the residual is due to the
discretization errors $\tau^{\Delta, h}_\text{flux}$ and
$\tau^{\Delta, h}_\text{div}$.
For explicit LES, at $\Delta \geq 32 h$, we can therefore safely neglect the
discretization errors in the residual flux.
For $\Delta \leq 4 h$, the discretization errors are significant and should
not be neglected.
This corresponds with the observation of Chow and Moin that $\Delta \geq 4 h$
is necessary for the second order
discretization errors to no longer dominate the residual
flux~\cite{chowFurtherStudyNumerical2003}.
For $4 h < \Delta < 32 h$, the discretization errors may or may not be
neglected depending on what accuracy is required. For an actual closure model,
which is going to have a significant modeling error anyway, the $21 \%$
contribution of the discretization to the residual flux at
$(\Delta, N) = (4 h, 300)$ may not be important. But if the residual flux is to
be used as a target for tuning the closure model coefficients, including the
$6 \%$ contribution of the discretization at $(\Delta, N) = (8 h, 300)$ could
still be useful.

\begin{figure*}
    \centering
    \includegraphics[width=\textwidth]{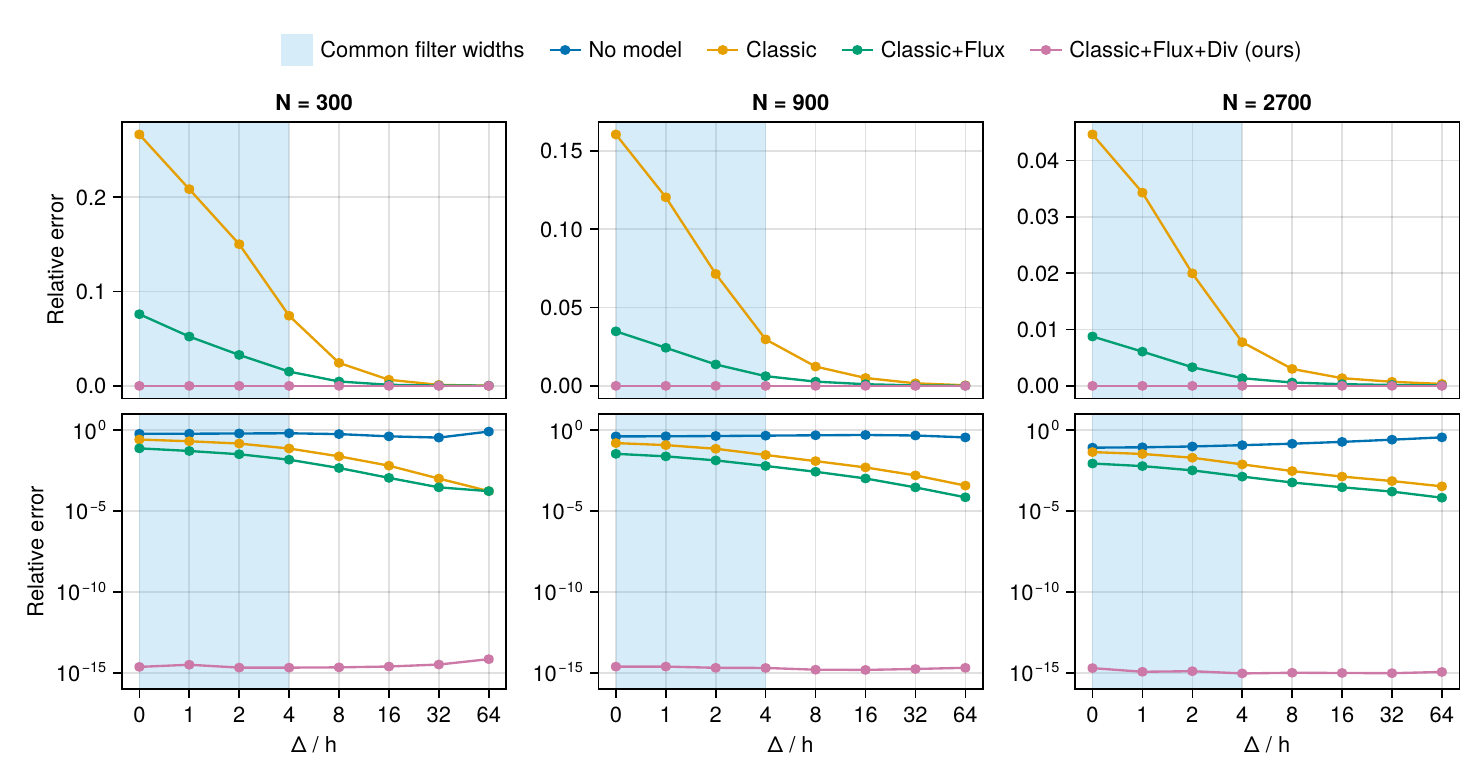}
    \caption{
        Relative errors at final time for DNS-aided LES-FVM of Burgers' equation.
        The shaded area indicates common ratios $\Delta / h$ in literature.
        Top: Linear scale (the no-model error is above the plotted region).
        Bottom: Logarithmic scale.
    }
    \label{fig:burgers-delta-errors}
\end{figure*}

\Cref{fig:burgers-fractions} only shows the importance of the discretization
errors in the a-priori setting.
In \cref{fig:burgers-delta-errors}, we show the average a-posteriori relative errors
\begin{equation}
    e_\text{model} \coloneq \frac{1}{1000} \sum_{u_0 \in \mathcal{U}_\text{initial}}
    \frac{
        \| v^{\Delta, h}_\text{model} - \bar{u}^{\Delta, h} \|_\Omega
    }{
        \| \bar{u}^{\Delta, h} \|_\Omega
    }
\end{equation}
at the final time for different values of $N$ and $\Delta$.
For all $N$ and $\Delta$, $e_\text{classic+flux+div}$ is at machine precision.
For $(N, \Delta) = (300, 0 h)$, we have
$e_\text{no-model} \approx 60 \%$,
$e_\text{classic} \approx 16 \%$, and
$e_\text{classic+flux} \approx 8 \%$.
For $(N, \Delta) = (300, 4 h)$, we have
$e_\text{no-model} \approx 65 \%$,
$e_\text{classic} \approx 7 \%$, and
$e_\text{classic+flux} \approx 2 \%$.
For $(N, \Delta) = (300, 32 h)$, we have
$e_\text{no-model} \approx 35 \%$,
$e_\text{classic} \approx 0.1 \%$, and
$e_\text{classic+flux} \approx 0.03 \%$.
This confirms that the discretization errors are significant for
$\Delta \leq 4 h$, and insignificant for $\Delta \geq 32 h$.

\begin{figure}
    \centering
    \includegraphics[width=\columnwidth]{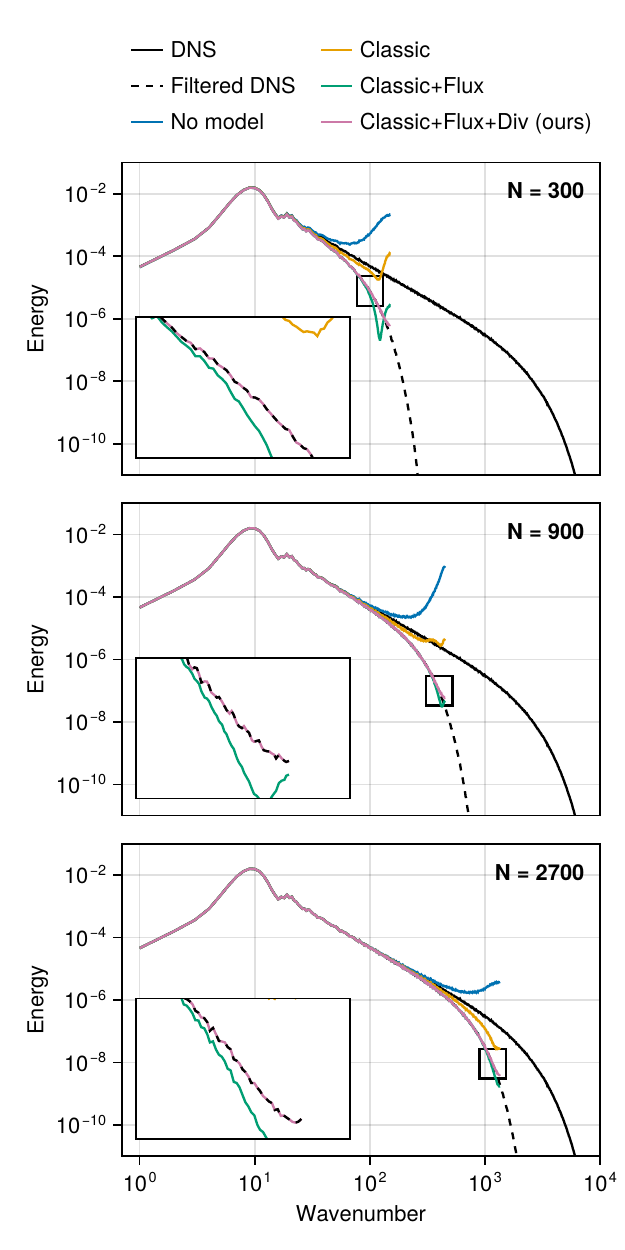}
    \caption{
        Energy spectra for the DNS-aided LES-FVM of Burgers equation.
        Our discretization-informed expression leads to an exact
        correspondence with the filtered DNS, while
        the classical expression is not dissipative enough.
    }
    \label{fig:burgers_spectrum}
\end{figure}

Further insight in the four models is obtained through the energy
spectrum.
We define the energy spectrum of a field $u$ as $|\hat{u}(k)|^2 / 2$, where
$\hat{u}(k)$ is the Fourier transform of $u$ at wavenumber $k$.
\Cref{fig:burgers_spectrum} shows the energy spectra at the final time,
averaged over the $1000$ solutions. Individual DNS solutions have noisy spectra
that fluctuate around the theoretical slope of $k^{-2}$ in the inertial range.
The averaged DNS spectrum is smooth, and adheres to the theoretical slope
for the inertial range.
The DNS grid spacing is small enough to resolve
both the dissipation and inertial ranges.

The filtered DNS spectrum stays on top of the DNS spectrum for the low
wavenumbers, and becomes damped for the highest wavenumbers.
This is because the transfer function of the filter $f^{\Delta, h}$
is close to $1$ for low wavenumbers, but decays for larger wavenumbers
(see \cref{fig:filter-spectra}).
The filtered DNS spectra are shown beyond the cut-off wavenumbers
$N / 2$ of the coarse $h$-grids.
These spectra are obtained by evaluating $\bar{u}^{\Delta, h}(x)$
\emph{at all the DNS grid points $x$},
and then applying the Fourier transform on the DNS grid.
Since $f^{\Delta, h}$ is not a spectral cut-off filter,
the spectrum of $\bar{u}^{\Delta, h}(x^h_i)$
restricted to the coarse $h$-grid points $x^h_i$,
computed with a coarse $h$-grid discrete Fourier transform,
is slightly different from the spectrum of $\bar{u}^{\Delta, h}(x)$ computed on the DNS grid.
In the more detailed zoom-in box, we therefore show the spectrum of
$\bar{u}^{\Delta, h}(x^h_i)$,
which is not defined beyond the cut-off wavenumber $N / 2$ of the
$h$-grid.

The LES-FVM spectra are all evaluated on the $h$-grid, and they are therefore
not defined beyond $N / 2$.
For all three grid sizes, the no-closure solution has too much energy in the
highest resolved wavenumbers. This is common in LES, and part of the motivation
for using a dissipative closure model.
The classic flux $\tau^{\Delta, h}_\text{classic}$ is not dissipative enough.
For the first two resolutions, it even shows signs of instability in the
highest wavenumbers.
The improved flux $\tau^{\Delta, h}_\text{classic+flux}$ gives a spectrum
closer to the filtered DNS than $\tau^{\Delta, h}_\text{classic}$,
but at the highest resolved wavenumbers, it is generally
\emph{too dissipative}.
It also shows a sign of instability at the highest wavenumbers.
This is likely because the DNS-aided closure term
$\tau^{\Delta, h}_\text{classic+flux}(u)$
does not depend on the LES-FVM solution $v^{\Delta, h}_\text{classic+flux}$ at all.
As $v^{\Delta, h}_\text{classic+flux}$ starts to decorrelate from $\bar{u}^{\Delta, h}$,
the DNS-aided closure term becomes less accurate over time and even detrimental
at the highest wavenumbers.
The correct flux $\tau^{\Delta, h}_\text{classic+flux+div}$ gives a spectrum
that perfectly overlaps with the $h$-grid variant of the filtered DNS
spectrum (in the zoom-in box).

\paragraph{Conclusion}
Using the correct expression for the residual LES-FVM flux as a closure
model gives \emph{perfect} results, unlike the classical residual LES flux
expression.
The exact residual flux $\tau^{\Delta, h}_\text{classic+flux+div}(u) = \tau^{\Delta, h}(u)$
can therefore be seen as the
\emph{best case scenario} for an LES-FVM closure model
$m^{\Delta, h}(\bar{u}^{\Delta, h})$.
However, in general $\tau^{\Delta, h}(u)$ cannot
be computed from $\bar{u}^{\Delta, h}$ alone,
since information is lost in the filtering process.

Next, we turn to an actual LES-FVM closure model that does not require a parallel
DNS simulation for evaluation. The question is whether the
discretization-informed expression for the residual flux can actually be
used to make better closure models.

\subsection{Smagorinsky closure}
\label{sec:burgers-smagorinsky}

Define the infinitesimal and discrete Smagorinsky fluxes as
\begin{equation}
    s(u) \coloneq \left| \partial_x u \right| \partial_x u, \quad
    s^h(u) \coloneq \left| \partial^h_x u \right| \partial^h_x u.
\end{equation}
For the double-filter $f^{\Delta, h}$ of width $\sqrt{\Delta^2 + h^2}$,
the classical LES Smagorinsky model is defined at the infinitesimal level as
\begin{equation}
    m^{\Delta, h}_\text{classic}(u; \theta)
    \coloneq -\theta^2 (\Delta^2 + h^2) s(u),
\end{equation}
where $\theta$ is a model parameter.
This classical LES model is then discretized as
\begin{equation}
m^{\Delta, h}_\text{discrete}(u; \theta) \coloneq
-\theta^2 (\Delta^2 + h^2) s^h(u).
\end{equation}

The coefficient $\theta$ can be estimated in various ways.
For the infinitesimal basic Smagorinsky model (before discretizing the flux),
a value can be derived analytically~\cite{lillyRepresentationSmallscaleTurbulence1966}.
In the dynamic Smagorinsky model, the coefficient is estimated from 
the LES state itself using a hypothesis of scale-similarity combined with a
test-filter~\cite{germanoDynamicSubgridscaleEddy1991}.
We employ the basic Smagorinsky framework, but with a data-driven estimate of
the coefficient using a least-squares fit to the residual flux.

In the classical LES framework,
the infinitesimal model
$m^{\Delta, h}_\text{classic}(\bar{u}^{\Delta, h}; \theta)$
is used to approximate the residual LES flux
$\tau^{\Delta, h}_\text{classic}(u) \coloneq
\overline{r(u)}^{\Delta, h} - r(\bar{u}^{\Delta, h})$.
The discretization is then only accounted for in the filter definition.
For a set of reference solution snapshots $\mathcal{U}$ (obtained from DNS),
we therefore estimate the coefficient for the classical Smagorinsky model as
\begin{equation}
    \min_\theta \
    \sum_{u \in \mathcal{U}}
    \left| -\theta^2 (\Delta^2 + h^2)
    s(\bar{u}^{\Delta, h}) - \tau^{\Delta, h}_\text{classic}(u) \right|^2,
\end{equation}
where the infinitesimal operators are approximated on the DNS grid.
Similarly, in the new LES-FVM framework, a discretization-informed Smagorinsky
model can be obtained by choosing $\theta$ such that
$m^{\Delta, h}_\text{discrete}(\bar{u}^{\Delta, h}; \theta)$ matches the
discretization-informed residual flux
$\tau^{\Delta, h}(u) \coloneq
\overline{r(u)}^\Delta - r^h(\bar{u}^{\Delta, h})$ as
\begin{equation}
    \min_\theta \
    \sum_{u \in \mathcal{U}}
    \left| -\theta^2 (\Delta^2 + h^2)
    s^h(\bar{u}^{\Delta, h}) - \tau^{\Delta, h}(u) \right|^2.
\end{equation}
We thus get the two different least-squares estimates for $\theta$:
\begin{align}
    -\theta^2_\text{classic} (\Delta^2 + h^2)
    & =
    \frac{
        \sum_{u \in \mathcal{U}}
        \left\langle
        s(\bar{u}^{\Delta, h}),
        \tau^{\Delta, h}_\text{classic}(u)
        \right\rangle
    }{
        \sum_{u \in \mathcal{U}}
        \left\langle
        s(\bar{u}^{\Delta, h}),
        s(\bar{u}^{\Delta, h})
        \right\rangle
    }, \\
    -\theta^2_\text{informed} (\Delta^2 + h^2)
    & = 
    \frac{
        \sum_{u \in \mathcal{U}}
        \left\langle
        s^h(\bar{u}^{\Delta, h}),
        \tau^{\Delta, h}(u)
        \right\rangle
    }{
        \sum_{u \in \mathcal{U}}
        \left\langle
        s^h(\bar{u}^{\Delta, h}),
        s^h(\bar{u}^{\Delta, h})
        \right\rangle
    },
\end{align}
where $\langle \cdot, \cdot \rangle$ denotes the inner product on $\Omega$.
Solving for $\theta$ is then straightforward.
These expressions are similar to the ones in the dynamic Smagorinsky model
(where the coefficient is also tuned using a least-squares estimate),
but here we use DNS data $u$ instead of LES data to compute the target residual flux.

What is the difference between $\theta_\text{classic}$ and $\theta_\text{informed}$?
In the classical LES setting, $\theta_\text{classic}$ is obtained \emph{before}
discretization (information about the discretization is only injected through
the definition of the filter width).
In our framework, $\theta_\text{informed}$ is obtained \emph{after}
discretization.
It should therefore be slightly higher than $\theta_\text{classic}$ to account
for the additional dissipation required by the discretization.
Our hypothesis is that the classical discretized model
$m^{\Delta, h}_\text{discrete}(\cdot; \theta_\text{classic})$
should perform worse than the discretization-informed model
$m^{\Delta, h}_\text{discrete}(\cdot; \theta_\text{informed})$,
since the choice of
$\theta_\text{classic}$ assumes an infinitesimal setting,
while $\theta_\text{informed}$ is obtained in the discrete setting,
using the correct discrete residual flux as a target.

\begin{figure}[t]
    \centering
    \includegraphics[width=0.9\columnwidth]{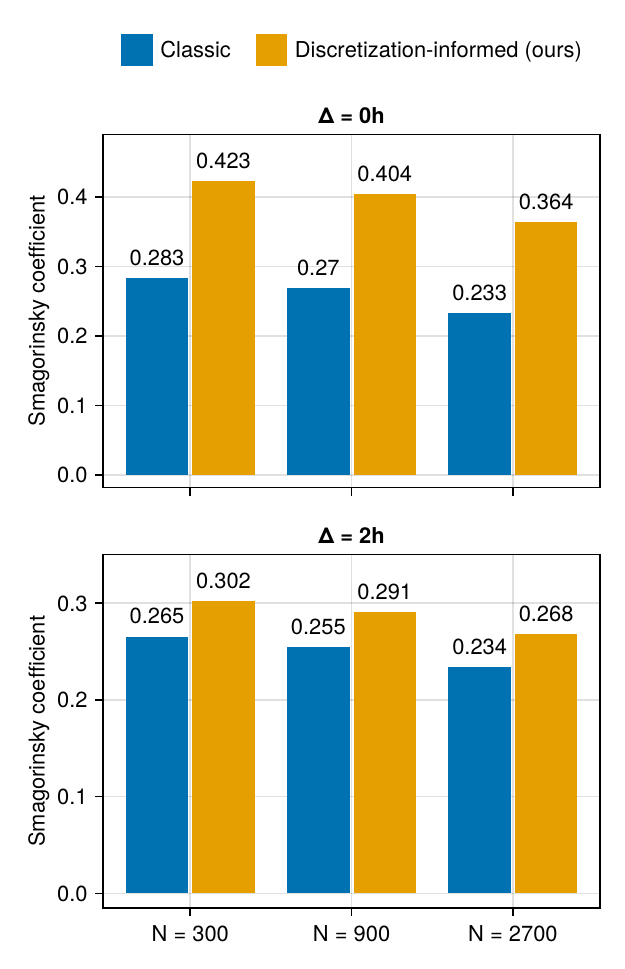}
    \caption{Fitted Smagorinsky coefficients for Burgers' equation.}
    \label{fig:burgers_smagorinsky_coefficients}
\end{figure}

In
\cref{fig:burgers_smagorinsky_coefficients},
we show the Smagorinsky coefficients
estimated from DNS data for Gaussian $f^\Delta$ and
$\Delta \in \{ 0 h,  2 h \}$. Indeed,
$\theta_\text{discrete}$ is larger than
$\theta_\text{classic}$ for all grid sizes.
The two coefficients are more different for $\Delta = 0 h$ than for $\Delta = 2 h$,
likely because
$\tau^{\Delta, h}$ converges to $\tau^{\Delta, h}_\text{classic}$
when the ratio $\Delta / h$ increases (see \cref{fig:burgers-fractions}).
For even larger $\Delta$, for example $\Delta = 32 h$,
the two coefficients would likely be very similar.

Since all three grid sizes lay within the inertial range of the energy spectrum,
we could also choose to fit one single Smagorinsky coefficient for all grid sizes.
Currently, we fit separate coefficients for each grid size $N$ and filter ratio
$\Delta / h$, resulting in larger
coefficients for the smaller grid sizes and filter ratios.
This is likely because the sharp gradients are less
resolved, and therefore more dissipation is needed.

\begin{figure}[t]
    \centering
    \includegraphics[width=0.9\columnwidth]{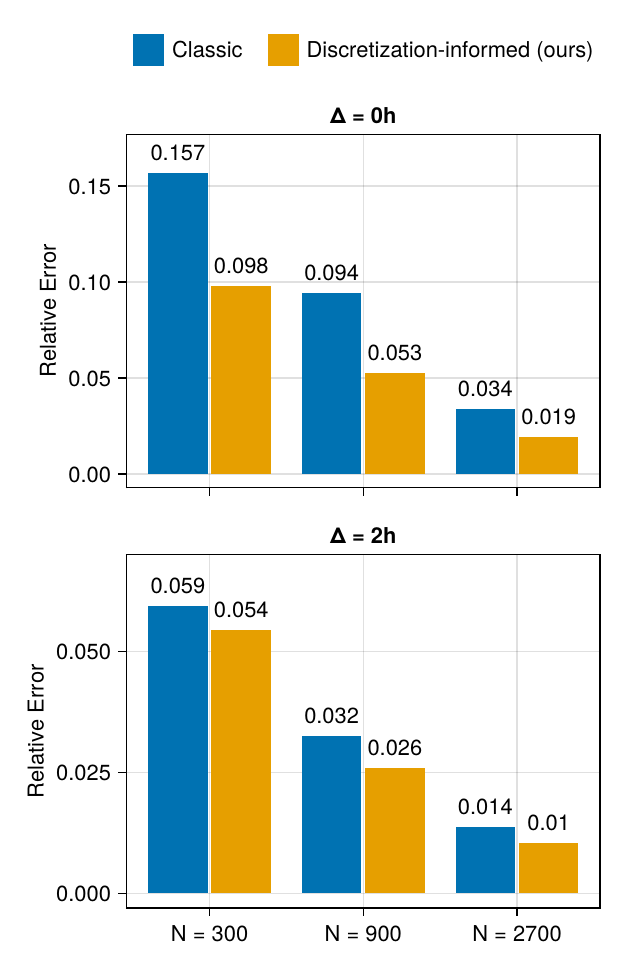}
    \caption{Relative errors at final time for LES-FVM of Burgers' equation with the Smagorinsky model.}
    \label{fig:burgers_smagorinsky_error}
\end{figure}

In \cref{fig:burgers_smagorinsky_error}, we show the relative errors at the
final time between the Smagorinsky LES-FVM approximation and the exact LES-FVM
solution. Our discretization-informed Smagorinsky model has lower errors than
the classical Smagorinsky model. The difference is significant for $\Delta = 0 h$,
but smaller for $\Delta = 2 h$. For even larger filter ratios, the difference
would likely vanish.

\begin{figure*}
    \centering
    \includegraphics[width=0.8\textwidth]{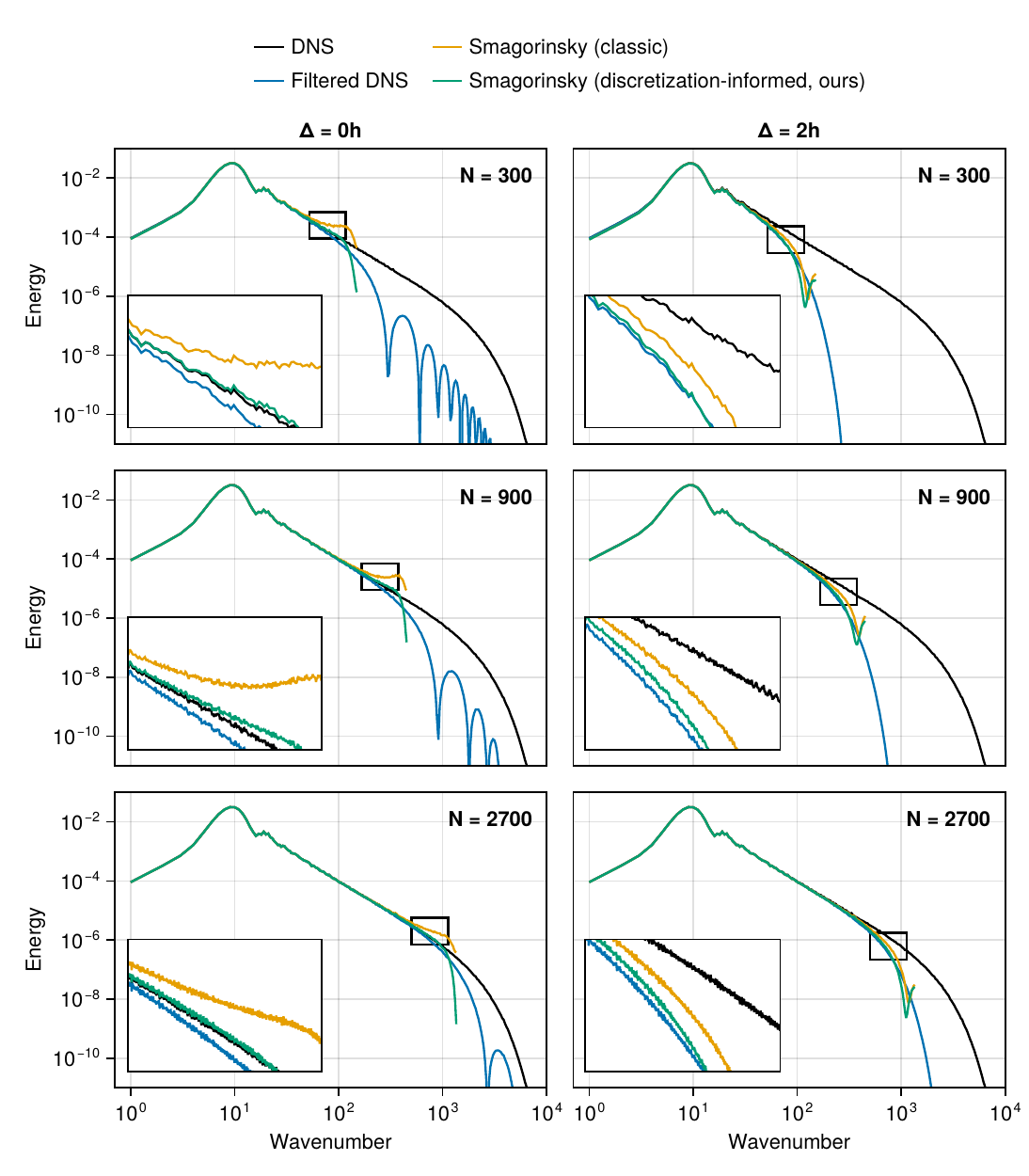}
    \caption{
        Energy spectra for the Burgers equation solved with the Smagorinsky
        model.
    }
    \label{fig:burgers_smagorinsky_spectrum}
\end{figure*}

The difference is further visible in the
energy spectra shown in \cref{fig:burgers_smagorinsky_spectrum}. We see that
the classical Smagorinsky model is not dissipative enough with a larger
spectrum than the filtered DNS. The discretization-informed Smagorinsky model
has a spectrum closer to the filtered DNS. For $\Delta = 2 h$, both models have
a peak at the highest wavenumbers. This is because the central difference
scheme used in $r^h$ gives rise to oscillations around the sharp gradients, and the
dissipative Smagorinsky model is not able to fully remove them without causing
over-dissipation across all wavenumbers. The Smagorinsky coefficient obtained
from the least-squares fit is a compromise between removing these oscillations
and preserving the rest of the spectrum.

We stress that the gain in accuracy obtained by tuning the Smagorinsky
model with the correct residual flux instead of the classic one, is strongly
limited by the eddy viscosity hypothesis. We expect that with more expressive
closure models, such as machine-learning-based ones, much lower errors can be
obtained, and the benefits of our new residual flux expression will be much
more prominent.

In conclusion, we have shown that for common filter ratios, the
discretization-informed residual flux for the 1D Burgers equation both leads to
zero error in a DNS-aided LES and improves the performance of an actual closure
model when used as a target for fitting the model parameters.
Next, we turn to the 3D incompressible Navier-Stokes equations. As we show,
they have additional complexities that must be overcome to achieve a
discretization-informed expression for the residual.

\section{LES-FVM for the 3D incompressible Navier-Stokes equations}
\label{sec:navier-stokes}

The incompressible Navier-Stokes equations in index-form are given by
\begin{equation} \label{eq:navier-stokes}
    \partial_j u_j = 0, \quad
    \partial_t u_i +
    \partial_j \left( \sigma_{i j}(u) + p \delta_{i j} \right) =
    0,
\end{equation}
where
\begin{equation} \label{eq:stress}
    \sigma_{i j}(u) \coloneq u_i u_j - \nu \left(\partial_j u_i + \partial_i u_j\right)
\end{equation}
is the convective-diffusive stress tensor,
$\delta_{i j}$ is the identity tensor (Kronecker delta-symbol),
$(i, j) \in \{1, 2, 3\}^2$ are indices, 
$\partial_t \coloneq \partial / \partial t$ and
$\partial_i \coloneq \partial / \partial x_i$ are partial derivatives,
$x \coloneq (x_1, x_2, x_3)$ is the position,
$t$ is the time,
$u_i(x, t)$ is the velocity in direction $i$,
$p(x, t)$ is the pressure,
and $\nu > 0$ is the kinematic viscosity.
Repeated indices imply summation (Einstein notation).
Since our derivations involve mainly spatial derivatives,
we will write $u_i(x)$ instead of $u_i(x, t)$
to ease the notation.
For simplicity, we assume the equations are defined in a periodic box
$\Omega = [0, \ell]^3$ of side length $\ell > 0$.

The space of periodic scalar-valued fields on $\Omega$ is denoted $U$.
Although the entries of the tensor
$\sigma(u) \in U^{3 \times 3}$ are scalar fields
$\sigma_{i j}(u) \in U$, we still use the word ``tensor'' to describe
both $\sigma(u)$ and $\sigma_{i j}(u)$ interchangeably
(and similarly for other vector and tensor fields).

\subsection{Pressure-free Navier-Stokes equations}

The system \eqref{eq:navier-stokes} does not have the form of a
conservation law for the state $(u, p) \in U^4$. To repeat the
procedure from the previous sections (1D), we first need to
rewrite the Navier-Stokes equations as a self-contained conservation-law. To
achieve this, we will eliminate the pressure $p$ from the momentum equations.

The pressure $p$ is a Lagrange multiplier that enforces the continuity equation
for $u$. A related viewpoint is that $p \delta_{i j}$ is a correction that makes the
non-divergence-preserving stress tensor $\sigma_{i j}(u)$ divergence-preserving.
We say that a stress tensor $\sigma \in U^{3 \times 3}$ is divergence-preserving
if $\partial_i \partial_j \sigma_{i j} = 0$,
i.e.\ the force $\partial_j \sigma_{i j}$ is divergence-free.

In \ref{sec:projection}, we introduce the two pressure projection
operators
\begin{equation}
    \pi_{i j} \coloneq \delta_{i j} -
    \partial_i \left(\partial_k \partial_k\right)^{\dagger} \partial_j
\end{equation}
and 
\begin{equation}
    \pi_{i j \alpha \beta} \coloneq
    \delta_{i \alpha} \delta_{j \beta} -
    \delta_{i j} \left(\partial_k \partial_k\right)^{\dagger} \partial_\alpha
    \partial_\beta,
\end{equation}
where $(\partial_k \partial_k)^{\dagger}$
is the inverse Laplacian operator subject to the constraint
of an average pressure of zero.
The vector-projector $\pi_{i j}$ can be used to make vector fields
divergence-free (since $\partial_i \pi_{i j} = 0$), while 
the tensor-projector
$\pi_{i j \alpha \beta}$ can be used to make tensor fields
divergence-\emph{preserving} (since $\partial_i \partial_j \pi_{i j \alpha \beta} = 0$).
The proofs are given in \cref{th:vector-projector} and \cref{th:stress-projector}.

Since $u$ is divergence-free, we can rewrite the
momentum equation in a pressure-free way
using either of the two projectors as $\partial_t u_i + \pi_{i
j} \partial_k \sigma_{j k}(u) = 0$ or
$\partial_t u_i + \partial_j \pi_{i j \alpha \beta} \sigma_{\alpha \beta}(u) = 0$.
We will use the latter form, since it is has the form of a conservation law
(divergence of a tensor).
The pressure-free momentum equations can thus be written as
\begin{equation} \label{eq:momentum-projected}
    L_i(u) \coloneq \partial_t u_i + \partial_j r_{i j}(u) = 0,
\end{equation}
where
\begin{equation}
    r_{i j}(u) \coloneq \pi_{i j \alpha \beta} \sigma_{\alpha \beta}(u)
    = \sigma_{i j}(u) + p \delta_{i j}
\end{equation}
is the projected stress tensor and $L_i \coloneq \partial_t + \partial_j r_{i j}$
is the pressure-free momentum
equation operator. Given $\sigma(u)$,
the projector $\pi$ computes the unique pressure
$p$ (up to a constant) such that $\sigma(u) + p \delta$ is
divergence-preserving. The pressure projection only modifies the
diagonal, so $r_{i j} = \sigma_{i j}$ for $i \neq j$.

In \cref{eq:navier-stokes},
there is a spatial constraint of divergence-freeness, which was not present
in \cref{sec:conservation-law}. In the projected form
\eqref{eq:momentum-projected}, this constraint is hidden inside $r_{i j}(u)$.
As a result, the 3D stress tensor $r_{i j}(u)$ is non-local in $u$,
and requires solving a Poisson equation,
whereas the 1D Burgers flux $r(u)$ was local.
As long as the initial velocity field is divergence-free,
the continuity equation can be ignored since
\cref{eq:momentum-projected} evolves the velocity field in a
divergence-preserving way.

Since we incorporated the divergence-free constraint,
the 3D conservation law \eqref{eq:momentum-projected}
has the same form as
the 1D conservation law \eqref{eq:conservation-law}.
We can therefore repeat the procedure from \cref{sec:les-fvm}
to obtain LES-FVM equations in conservative form.
We use the same notation as in
\cref{sec:conservation-law,sec:les-fvm,sec:burgers}
to highlight the similarities and differences.
One difference is the presence of direction indices $i$ and $j$.
The scalar flux $r(u)$ from \cref{sec:conservation-law}
is now a $3 \times 3$ stress tensor $r_{i j}(u)$.

\subsection{Classical LES}

Consider a convolutional homogeneous spatial filter
$f^\Delta : u \mapsto \bar{u}^\Delta$ defined
for all scalar fields $u \in U$ as
\begin{equation}
    \bar{u}^\Delta(x) \coloneq \int_{\mathbb{R}^3} G^\Delta(x - y) u(y) \, \mathrm{d} y
\end{equation}
for some kernel $G^\Delta$.
We integrate over $\mathbb{R}^3$ instead of $\Omega$
to allow for periodic extension.
As in 1D, this filter commutes with differentiation:
\begin{equation}
    f^\Delta \partial_i = \partial_i f^\Delta.
\end{equation}
The filtered Navier-Stokes equations therefore take the conservative form
\begin{equation} \label{eq:navier-stokes-filtered}
    \partial_j \bar{u}^\Delta_j = 0, \quad
    \partial_t \bar{u}^\Delta_i
    + \partial_j \left( \sigma_{i j}(\bar{u}^\Delta) + \xi^\Delta_{i j}(u) + \bar{p}^\Delta \delta_{i j} \right)
    = 0,
\end{equation}
where $\bar{u}^\Delta_i$ and $\bar{p}^\Delta$ are filtered fields and
\begin{equation}
    \xi^\Delta_{i j}(u) \coloneq \overline{u_i u_j}^\Delta - \bar{u}^\Delta_i \bar{u}^\Delta_j
\end{equation}
is the classical RST.
We reserve the symbol $\tau^\Delta$ for the RST
$\tau^\Delta(u) \coloneq \pi \xi^\Delta(u) = \overline{r(u)}^\Delta - r(\bar{u}^\Delta)$,
which contains the pressure projector $\pi$.
Since filtering commutes with differentiation,
we also have $f^\Delta \pi = \pi f^\Delta$.

For classical structural LES models, the unprojected tensor
$\xi^\Delta_{i j}(u)$ is
replaced by a closure model $m^\Delta_{i j}(\bar{u}^\Delta)$
that only depends on $\bar{u}^\Delta$.
Since $\xi^\Delta_{i j}$ is a symmetric tensor ($\xi^\Delta_{i j} = \xi^\Delta_{j i}$),
the closure model is designed to be symmetric as well ($m^\Delta_{i j} = m^\Delta_{j i}$).
If the filter $f^\Delta$ is local in space,
then $\xi^\Delta$ is also local, and $m^\Delta$ can be chosen to be a local
closure.
To solve the LES equations, the closed equations are discretized.
Since $m^\Delta$ is symmetric, its discretized variant is also symmetric.

\subsection{Classical FVM}

Using the staggered spatial discretization scheme of Harlow and
Welch~\cite{harlowNumericalCalculationTimeDependent1965},
we define the DNS equations as
\begin{equation} \label{eq:ns-dns}
    \partial^h_j u^h_j = 0, \quad
    \partial_t u^h_i
    + \partial^h_j \left( \sigma^h_{i j}\left(u^h\right) + p^h \delta_{i j} \right)
    = 0.
\end{equation}
Here, $u^h \in U^3$ and $p^h \in U$ are the DNS velocity and pressure fields,
\begin{equation}
    \sigma^h_{i j}(u) \coloneq \Big(\eta^h_j u_i\Big) \Big(\eta^h_i u_j\Big) -
    \nu \left(\partial^h_j u_i + \partial^h_i u_j\right)
\end{equation}
is a grid-compatible numerical stress tensor
analogous to the 1D numerical Burgers flux $r^h$
from \cref{eq:burgers-discrete},
$\partial^h_i : U \to U$ and $\eta^h_i : U \to U$ are finite difference and
interpolation operators defined for all $u \in U$ as
\begin{align}
    \partial^h_i u(x) & \coloneq
    \frac{1}{h} \left[
        u\left(x + \frac{h}{2} e_i\right) -
        u\left(x - \frac{h}{2} e_i\right)
    \right], \\
    \eta^h_i u(x) & \coloneq
    \frac{1}{2} \left[
        u\left(x - \frac{h}{2} e_i\right) +
        u\left(x + \frac{h}{2} e_i\right)
    \right],
    \label{eq:eta}
\end{align}
where $h$ is a uniform grid spacing and $(e_i)_{i = 1}^3$ are the unit vectors.
These operators are second-order accurate in $h$, and, as a result,
$u^h(x, t)$ is a second-order accurate approximation of $u(x, t)$
for all $x$ and $t$ if $u^h(x, 0) = u(x, 0)$.

We use the projected form of the DNS equations \eqref{eq:ns-dns}:
\begin{equation} \label{eq:ns-dns-projected}
    L^h_i(u^h) \coloneq
    \partial_t u^h_i
    + \partial^h_j r^h_{i j}(u^h)
    = 0,
\end{equation}
where $L^h_i$ is the pressure-free finite volume momentum equation operator,
\begin{equation}
    r^h_{i j} \coloneq \pi^h_{i j \alpha \beta} \sigma^h_{\alpha \beta}
\end{equation}
is the projected discrete stress tensor, and
\begin{equation}
    \pi^h_{i j \alpha \beta} \coloneq
    \delta_{i \alpha} \delta_{j \beta} -
    \delta_{i j} \left(\partial^h_k \partial^h_k\right)^{\dagger} \partial^h_\alpha
    \partial^h_\beta
\end{equation}
is a discrete version of $\pi_{i j \alpha \beta}$ that makes stress tensors
discretely divergence-preserving
(i.e.\ $\partial^h_i \partial^h_j \pi^h_{i j \alpha \beta} = 0$).
Given the stress $\sigma^h_{i j}(u^h)$, the projector
$\pi^h_{i j \alpha \beta}$ computes the isotropic pressure correction
$p^h \delta_{i j}$ that makes $r^h_{i j}(u^h)$ divergence-preserving, so we get the
identity $r^h_{i j}(u^h) = \sigma^h_{i j}(u^h) + p^h \delta_{i j}$.

\begin{figure}
    \centering
    \includegraphics[width=\columnwidth]{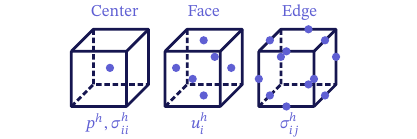}
    \caption{
        Staggered positions in a reference volume of
        a scalar $p^h$, vector $u^h$, and tensor $\sigma^h$.
        The diagonal tensor components are in the volume center, while
        the off-diagonal tensor components are on the volume edges.
    }
    \label{fig:staggered}
\end{figure}

We divide the domain $\Omega = [0, \ell]^3$ into $N \coloneq \ell / h$
reference volumes in each dimension ($N^3$ volumes in total).
When restricting the FVM solution, we use a staggered representation as depicted
in \cref{fig:staggered}. The pressure $p^h$ is restricted to the volume centers
(pressure points). The velocity components $u^h_i$ are restricted to the centers
of the volume faces orthogonal to $e_i$ (velocity points).
The positioning of the tensor components $\sigma^h_{i j}$ follows naturally.
They are in the pressure points if $i = j$, and in the
centers of the volume edges otherwise.
The continuity equation is evaluated in the pressure points, and the momentum
equations in the velocity points.

Note that while we still use the continuous notation $u^h(x, t)$, the degrees of
freedom depicted in \cref{fig:staggered} contain all the information needed to
evaluate the DNS equations in the required points. This is because $\sigma^h_{i j}$
is chosen such that the restricted DNS equations are closed in the discrete sense.

We use the divergence-form for the convective term, which is energy-conservative if
$\partial^h_j u^h_j = 0$~\cite{morinishiFullyConservativeHigher1998}.
The continuity equation $\partial^h_j u^h_j = 0$ is therefore enforced strictly
by using semi-explicit time discretization schemes
(applying a pressure projection \emph{after} each momentum time
step)~\cite{hairerNumericalSolutionDifferentialAlgebraic1989,
sanderseEnergyconservingRungeKutta2013}.

\subsection{Grid filters and the filter-swap property in 3D}

To obtain LES-FVM equations in conservative form, we need grid
filters that satisfy a filter-swap property.
As in the 1D case, the finite difference operator $\partial^h_i$ is associated
to a one-dimensional top-hat filter $g^h_i : U \to U$ 
that is only applied in the direction $e_i$. For all $u \in U$, we define it as
\begin{equation} \label{ns-gh}
    g^h_i u(x) \coloneq
    \frac{1}{h} \int_{-h / 2}^{h / 2} u(x + \alpha e_i) \, \mathrm{d} \alpha.
\end{equation}
We can use the fundamental theorem of calculus to show the
relation between $\partial^h_i$ and $g^h_i$
(with no summation over $i$):
\begin{equation} \label{eq:divergencetheorem}
    \partial^h_i = g^h_i \partial_i,
\end{equation}
meaning that the finite difference $\partial^h_i$
can be induced by filtering the exact derivative $\partial_i$.

The Navier-Stokes momentum and continuity equations include derivatives in
each of the cardinal directions $x_1$, $x_2$, and $x_3$. For example, the $i$-th
momentum equation includes the term
$\partial_j r_{i j} = \partial_1 r_{i 1} + \partial_2 r_{i 2} + \partial_3 r_{i 3}$.
If we filter the $i$-th momentum equation with the 1D grid filter $g^h_i$ in the
direction $i$, the equation for $g^h_i u_i$ would include the term
$g^h_i \partial_j r_{i j}$, and we could only use the filter-swap property
on one of the three terms, where $j = i$. A similar remark was made by
Lund, who argued that ideally, we would like
to filter each of the three derivatives $\partial_j$ with their associated
filters $g^h_j$ separately, but
such equations cannot be obtained by applying one single filter to
the Navier-Stokes momentum equations, since the same filter has to be applied to
all of the terms~\cite{lundUseExplicitFilters2003}.
We therefore resort to multi-dimensional grid filters.

From the 1D filters $g^h_i$, we define the multi-dimensional 
volume-averaging filter
\begin{equation} \label{eq:ns-fh}
f^h \coloneq g^h_1 g^h_2 g^h_3
\end{equation}
and surface-averaging filters
\begin{equation} \label{eq:ns-fhi}
    f^{h, 1} \coloneq g^h_2 g^h_3, \quad
    f^{h, 2} \coloneq g^h_1 g^h_3, \quad
    f^{h, 3} \coloneq g^h_1 g^h_2.
\end{equation}
For all $u \in U$, we employ the short-hand notation
\begin{equation}
    \bar{u}^{h} \coloneq f^h u, \quad
    \bar{u}^{h, i} \coloneq f^{h, i} u.
\end{equation}
A similar notation was used by Schumann~\cite{schumannSubgridScaleModel1975}.

By using the 1D property \eqref{eq:divergencetheorem} for $g^h_i$,
we obtain the following filter-swap commutation property for the
multi-dimensional grid filters
(with no sum over $i$):
\begin{equation} \label{eq:ns-filter-swap}
    f^h \partial_i = \partial^h_i f^{h, i}.
\end{equation}
This is a discrete equivalent of the ininitesimal property
$f^\Delta \partial_i = \partial_i f^\Delta$
which allows for switching between infinitesimal and
discrete derivatives. The important observation is that the filter definition
changes with the derivative definition.
The 1D case in \cref{eq:cl-continuous-commutation} can be seen as a special
case of \cref{eq:ns-filter-swap}, where the 1D volume is $[x \pm h / 2]$ and
the 1D surface is a single point in $\{ x \pm h / 2 \}$.

\subsection{LES-FVM}

The LES equation is $\overline{L(u)}^\Delta = 0$.
Applying the FVM filter $f^h$ gives the LES-FVM equation
\begin{equation}
    \overline{L(u)}^{\Delta, h} = 0.
\end{equation}
Using the filter-swap commutation property
\eqref{eq:ns-filter-swap}, we obtain the $\partial^h_j$-conservative form
\begin{equation} \label{eq:ns-u-Delta-h}
    \partial_t \bar{u}^{\Delta, h}_i +
    \partial^h_j \overline{r_{i j}(u)}^{\Delta, h, j} = 0.
\end{equation}
We can already note a couple of interesting observations.
The stress tensor $\overline{r_{i j}(u)}^{\Delta, h, j}$ (no sum over $j$)
is non-symmetric
(because it is averaged over the surface orthogonal to $j$, but not $i$)
and non-local in $u$ (due to the projection present inside $r$).
Furthermore, the LES-FVM solution
$\bar{u}^{\Delta, h}$ is not discretely divergence-free, since
$\partial^h_j \bar{u}^{\Delta, h}_j \neq
\overline{\partial_j u_j}^{\Delta, h} = 0$
(see \cref{th:noncommutation-coarsegraining}).
\footnote{
    An alternative is therefore to solve for the \emph{surface-averaged} field
    $\bar{u}^{\Delta, h, i}_i$ (no sum over $i$),
    since it becomes $\partial^h_j$-divergence-free after filtering
    ($\partial^h_j \bar{u}^{\Delta, h, j}_j = 0$
    )~\cite{kochkovMachineLearningAccelerated2021,
    agdesteinDiscretizeFirstFilter2025}.
    However, this field is not governed by a $\partial^h_j$-conservation law, since
    the surface-averaged momentum is not
    conserved discretely~\cite{agdesteinDiscretizeFirstFilter2025}.
    We will therefore not consider this option here.
}
Since the divergence-free constraint is important for stability in the
energy-conserving convective term,
we consider the divergence-free part of the volume-averaged field
$\bar{u}^{\Delta, h}$. This is precisely the projected field
\begin{equation}
    \bar{u}^{\Delta, h, \pi}_i \coloneq \pi^h_{i j} \bar{u}^{\Delta, h}_j.
\end{equation}
Applying the projector $\pi^h_{i j}$ to \cref{eq:ns-u-Delta-h}
and using \cref{th:projection-commutation} to swap projection and divergence
gives
\begin{equation} \label{eq:ns-u-Delta-h-pi}
    \partial_t \bar{u}^{\Delta, h, \pi}_i +
    \partial^h_j \pi^h_{i j \alpha \beta} \overline{r_{\alpha \beta}(u)}^{\Delta, h, \beta}
    = 0,
\end{equation}
which is a discrete conservation law for a divergence-free velocity field.

Adding the discrete flux term
$\partial^h_j r^h_{i j}(\bar{u}^{\Delta, h, \pi})$
to both sides gives the projected LES-FVM
equation in the form
\begin{equation} \label{eq:ns-les-fvm}
    L^h_i(\bar{u}^{\Delta, h, \pi}) = - \partial^h_j \tau^{\Delta, h, \pi}_{i j}(u),
\end{equation}
where the projected RST
$\tau^{\Delta, h, \pi}_{i j} \coloneq \pi^h_{i j \alpha \beta} \xi^{\Delta, h, \pi}_{\alpha \beta}$
is defined by the RST (with no sum over $j$)
\begin{equation} \label{eq:ns-xi-Delta-h-pi}
    \boxed{
    \xi^{\Delta, h, \pi}_{i j}(u) \coloneq
    \overline{r_{i j}(u)}^{\Delta, h, j} -
    \sigma^h_{i j}(\bar{u}^{\Delta, h, \pi}).
    }
\end{equation}
For comparison, the classical LES RST
for the projected double-filter $\pi f^h f^\Delta$ would be
$\tau^{\Delta, h, \pi}_{\text{Classic}, i j} \coloneq
\pi_{i j \alpha \beta} \xi^{\Delta, h, \pi}_{\text{Classic}, \alpha \beta}$,
where
\begin{equation}
    \xi^{\Delta, h, \pi}_{\text{Classic}, i j}(u) \coloneq
    \overline{\sigma_{i j}(u)}^{\Delta, h} -
    \sigma_{i j}(\bar{u}^{\Delta, h}).
\end{equation}
Note that $\bar{u}^{\Delta, h}$ is already infinitesimally divergence-free, so
$\pi_{i j} \bar{u}^{\Delta, h}_j = \bar{u}^{\Delta, h}_i$.

Like in the 1D case,
the discretization-informed RST $\xi^{\Delta, h, \pi}_{i j}(u)$ accounts for
both the filter-swap mechanism and the numerical stress $\sigma^h_{i j}$.
Unlike the 1D case, it also accounts for the pressure, since
the pressure that makes $\bar{u}^{\Delta, h, \pi}$ discretely
divergence-free is not equal to a filtered variant of the pressure that makes
$u$ infinitesimally divergence-free.

The LES-FVM RST $\xi^{\Delta, h, \pi}$ has two important differences from
the classical LES RST $\xi^{\Delta, h, \pi}_\text{Classic}$
(classical LES with $\pi f^h f^\Delta$ as the filter).
First, it is non-symmetric
($\xi^{\Delta, h, \pi}_{i j} \neq \xi^{\Delta, h, \pi}_{j i}$),
while the classical RST is symmetric
($\xi^{\Delta, h, \pi}_{\text{Classic}, i j} = \xi^{\Delta, h, \pi}_{\text{Classic}, j i}$).
Second, is it non-local in $u$ (due to the pressure projector),
while the classical RST is local in $u$.

\subsection{LES-FVM closure}

Let
$m^{\Delta, h, \pi}(\bar{u}^{\Delta, h, \pi}) \approx
\operatorname{dev} \left( \xi^{\Delta, h, \pi}(u) \right)$
be a grid-compatible LES-FVM closure model for the deviatoric part of the RST
defined as
$\operatorname{dev}(\sigma)_{i j} \coloneq
\sigma_{i j} - \delta_{i j} \sigma_{k k} / 3$
for all $\sigma \in U^{3 \times 3}$.
Since $\partial^h_j \pi^h_{i j \alpha \beta} p \delta_{\alpha \beta} = 0$
for all scalar fields $p \in U$,
we only need a closure model for the deviatoric part of the RST
(the isotropic part will be absorbed by the pressure projector).
The approximate LES-FVM equations are
\begin{equation} \label{eq:ns-les-fvm-closed}
    L^h_i(v^{\Delta, h, \pi}) = - \partial^h_j \pi^h_{i j \alpha \beta} m^{\Delta, h, \pi}_{\alpha \beta}(v^{\Delta, h, \pi}),
\end{equation}
where $v^{\Delta, h, \pi} \approx \bar{u}^{\Delta, h, \pi}$ is the approximate
LES-FVM solution.

We say that the closure $m^{\Delta, h, \pi}$ is structural if it is chosen to
approximate
$\xi^{\Delta, h, \pi}$ directly, and functional if it is chosen to produce the
same dissipation as $\xi^{\Delta, h, \pi}$.
In 1D, an RST and its dissipation coefficient are fields of the same dimension
(scalar field). The distinction between structural and functional models is
therefore less clear in 1D.
In 3D, the RST has $9$ components, while the dissipation coefficient has
$1$ component (scalar field). There is therefore more freedom in how to choose
a functional model than a structural model in 3D.

In the exact LES-FVM equation (for $\bar{u}^{\Delta, h, \pi}$),
the RST is non-symmetric and non-local in $u$.
This observation would suggest that the closure model in the approximate
LES-FVM equation (for $v^{\Delta, h, \pi}$)
should also be non-symmetric and non-local in $v^{\Delta, h, \pi}$.
In the exact LES equation (for $\bar{u}^\Delta$), the RST is both symmetric and
local in $u$, which justifies using a symmetric and local closure model
in classical LES (before discretization).
This means that a ``perfect'' LES-FVM closure model needs to predict $9$
different tensor components, while classical LES closures only need to predict
$6$ symmetric tensor components.

We now perform an experiment with the new RST expression.

\section{Experiment: DNS-aided LES-FVM for 3D turbulence}
\label{sec:navier-stokes-experiments}

Like for the Burgers equation, we employ a ``DNS-aided LES-FVM'' approach to
assess the importance of the RST definition in the LES-FVM equations.
We consider a 3D decaying turbulence test case in a periodic box.
The equations are defined by the following unitless parameters.
The domain size is $\ell \coloneq 1$.
The viscosity is $\nu \coloneq 2 \times 10^{-4}$.
The number of finite volumes in each of the 3 dimensions for DNS and LES are
$N_\text{DNS} \coloneq 500$ and $N \coloneq 100$
respectively, with a compression factor of $5$.
The grid spacings are $h_\text{DNS} \coloneq \ell / N_\text{DNS}$ and $h \coloneq \ell / N$
(note that these are called $h$ and $H$ in \ref{sec:two-grids}, respectively).
The LES filter $f^\Delta$ is a Gaussian of width $\Delta \coloneq 2 h$.
In the DNS discretization of the Gaussian kernel $G^\Delta$, we include points
up to $2$ standard deviations out on each side (see \ref{sec:two-grids} for details).
The infinitesimal expressions from \cref{sec:navier-stokes} are all approximated
on the DNS grid so that we can evaluate them based on the DNS solution,
but we will still use the infinitesimal notation to avoid visual clutter.

\subsection{Initialization}

Let $u \in U^3$ be a velocity field and $\kappa \in \mathbb{N}$ be a scalar
wavenumber.
We define the energy spectrum of $u$ at $\kappa$ as
\begin{equation} \label{eq:ns-spectrum}
    E(u, \kappa) \coloneq \frac{1}{2}
    \sum_{k \in K(\kappa)}
    \| \hat{u}(k) \|^2,
\end{equation}
where $\hat{u}(k)$ is the Fourier transform of $u$ at a wavenumber $k$ and 
$K(\kappa) \coloneq \{ k \in \mathbb{Z}^3 \ \mid
\ \kappa \leq \| k \| < \kappa + 1 \}$
is the shell of vector wavenumbers with magnitude
between $\kappa$ and $\kappa + 1$.

We initialize the solution $u$ on the DNS grid through the following
procedure, where $\leftarrow$ denotes the assignment operator.
\begin{enumerate}
    \item Sample a random field
    $u_i(x) \sim \mathcal{N}(0, 1)$
    from a normal distribution
    for each $i \in \{ 1, 2, 3 \}$ and
    each DNS grid point $x$.
    We do not define $u_i$ outside the grid points.
    \item Project the velocity: $u \leftarrow \pi u$.
    \item Compute the discrete Fourier transform
        $\hat{u} \leftarrow \operatorname{FFT}(u)$.
    \item For all wavenumbers
        $\kappa \in \{ 0, 1, \dots, \lfloor
        \sqrt{3} N_\text{DNS} / 2 \rfloor \}$,
        compute the current shell energy $E(u, \kappa)$,
        where $\lfloor \cdot \rfloor$
        denotes the integer part. For $\kappa \geq N / 2$,
        the shells are only partially filled,
        since the discrete Fourier transform gives a finite number of Fourier
        modes.
    Adjust the coefficients in the shell $K(\kappa)$ as
        \begin{equation}
            \hat{u}(k) \leftarrow
            \sqrt{\frac{P(\kappa)}{E(u, \kappa)}}
            \hat{u}(k), \quad \forall k \in K(\kappa),
        \end{equation}
        where $P(\kappa)$ is a prescribed energy profile defined as
        the Kolmogorov scaling in the inertial range as
        \begin{equation}
            P(\kappa) \coloneq \kappa^{-5 / 3}.
        \end{equation}
    \item Apply inverse Fourier transform
        $u \leftarrow \operatorname{IFFT}(\hat{u})$. 
    \item Reproject the velocity field (since the shell normalization may
        slightly perturb the DNS divergence of $u$):
        $u \leftarrow \pi u$.
    \item Scale the velocity field such that the total energy adds up to
        $1$:
        $u \leftarrow \frac{1}{\sqrt{1 / 2 \| u \|^2}} u$.
\end{enumerate}
The resulting velocity field $u$ is represented as an array of size
$N_\text{DNS}^3 \times 3$.
It is divergence-free,
the spectrum is proportional to the profile $P$
(with some deviations due to the second projection),
and the total energy is $1$.

Since the initial spectrum is artificial, we first run the DNS simulation for
$0.1$ time units to obtain a more realistic distribution of velocity scales.

\subsection{DNS-aided LES-FVM}

The DNS-aided LES-FVM formulation is defined as
\begin{align}
    L(u) = 0, \quad L^h(v^\text{model}) = - \nabla^h \cdot \tau^\text{model}(u),
\end{align}
where $u$ is the DNS solution, $v^\text{model}$ is the modeled
LES-FVM solution,
$\nabla^h \coloneq (\partial^h_1, \partial^h_2, \partial^h_3)$,
$\tau^\text{model} \coloneq \pi^h \xi^\text{model}$,
and $\xi^\text{model} : U^3 \to U^{3 \times 3}$ is an RST expression
taking the DNS solution as input
(to be defined in \cref{sec:RST-expressions}).
These equations are still continuous in time.
A forward-Euler time discretization with adaptive time stepping is
\begin{align}
    \Delta t^n
    & \coloneq 
    C \times \min \left(
        \frac{h_\text{DNS}}{\max \left| u^n \right|},
        \frac{h_\text{DNS}^2}{6 \nu}
    \right), \\
    u^{n + 1}
    & \coloneq u^n - \Delta t^n \, \nabla \cdot r(u^n), \\
    v^{n + 1}
    & \coloneq v^n - \Delta t^n \, \nabla^h \cdot
    \left(r^h(v^n) + \tau^\text{model}(u^n) \right),
\end{align}
where $\Delta t^n$ is chosen based on the DNS solution,
$u^n$ and $v^n$ are the Forward-Euler approximations to the
$u$ and $v^\text{model}$ at time
$t^n \coloneq \sum_{k = 0}^{n - 1} \Delta t^k$, and $C \coloneq 0.4$ is a
margin in the CFL expression.
The initial conditions are given by the final DNS field from the warm-up
simulation with $v^\text{model} = \bar{u}^{\Delta, h, \pi}$.
The goal is for $v^\text{model}$ to stay close to the projected filtered DNS
velocity field $\bar{u}^{\Delta, h, \pi}$ over time.

Since both $u = \pi u$ and $v^h = \pi^h v^h$ are divergence-free on
their respective grids, and \cref{th:projection-commutation}
can be used to commute projection and divergence, we can use the
``semi-explicit'' form of the scheme:
\begin{align}
    u^{n + 1}
    & = \pi \left[ u^n - \Delta t^n \nabla \cdot \sigma(u) \right], \\
    v^{n + 1}
    & = \pi^h \left[
        v^n - \Delta t^n \nabla^h \cdot
        \left(\sigma^h(v^n) + \xi^\text{model}(u^n) \right)
    \right], \label{eq:ns-lesfvm-forwardeuler}
\end{align}
which consists of doing an explicit forward-Euler step with the unprojected
stresses before reprojecting the velocity field back onto the space of
divergence-free fields.

Since turbulent flows are chaotic, we run the simulation for a short duration
of $0.1$ time units.
If we run for longer, the LES and DNS solutions will
decorrelate
and the DNS-aided closure term will be of little use
(unless $\xi^\text{model}$ is the new correct RST expression).

\subsection{RST expressions} \label{sec:RST-expressions}

We consider five DNS-aided ``closure'' models
defined (with no sum over $j$) by:
\begin{align}
    \xi^\text{A}_{i j}(u)
    & \coloneq 0, \\
    \xi^\text{B}_{i j}(u)
    & \coloneq
    \overline{\sigma_{i j}(u)}^{\Delta, h}  -
    \sigma_{i j}\left(\overline{u}^{\Delta, h}\right), \\
    \xi^\text{C}_{i j}(u)
    & \coloneq
    \overline{\sigma_{i j}(u)}^{\Delta, h}  -
    \sigma^h_{i j}\left(\overline{u}^{\Delta, h, \pi}\right), \\
    \xi^\text{D}_{i j}(u)
    & \coloneq
    \overline{\sigma_{i j}(u)}^{\Delta, h, j} -
    \sigma^h_{i j}\left(\overline{u}^{\Delta, h, \pi}\right), \\
    \xi^\text{E}_{i j}(u)
    & \coloneq
    \frac{1}{2} \left(
        \xi^\text{D}_{i j}(u) +
        \xi^\text{D}_{j i}(u)
    \right).
\end{align}
The respective LES-FVM solutions are denoted
$\{v^\text{A}, \dots, v^\text{E}\}$.
$\xi^\text{A}$ is the no-model case (DNS on the coarse grid).
$\xi^\text{B}$ is the classical RST expression for the given double-filter,
with all operators evaluated on the DNS grid.
Since $\pi f^{\Delta, h} = f^{\Delta, h} \pi$ and
$\pi u = u$,
the second term in $\xi^B_{i j}(u)$ simplifies from
$\sigma( \pi \bar{u}^{\Delta, h})$ to
$\sigma( \bar{u}^{\Delta, h})$.
$\xi^\text{C}$ accounts for the coarse grid discretization through the $h$-grid
numerical stress $\sigma^h$ in the second term.
Since the filtered field is not divergence-free on the $h$-grid
($\pi^h f^{\Delta, h} \neq f^{\Delta, H} \pi$),
the second term in $\xi^C$ includes the 
projector $\pi^h$. $\xi^\text{D}$ further accounts for the filter-swap
mechanism. The FVM filter is therefore surface-averaging in the first term and
not volume-averaging. $\xi^\text{E}$ is defined as the symmetric part of
$\xi^\text{D}$ (the surface-averaged term makes $\xi^\text{D}$
non-symmetric).

From \cref{eq:ns-xi-Delta-h-pi}, we know that $\xi^\text{D}$ is the correct
expression, since it accounts for the $h$-grid FVM discretization.
In an
``explicit LES'', which can be thought of as letting $h \to 0$, $\xi^\text{B}$
would be the correct expression. $\xi^\text{C}$ and $\xi^\text{E}$ are
included to assess the importance of the filter-swap mechanism and the
non-symmetric part of the correct RST $\xi^\text{D}$, respectively. As
classical LES closure models are designed to be symmetric, they are \emph{at
best} able to represent $\xi^\text{E}$, but not the non-symmetric RST
$\xi^\text{D}$.

\subsection{Results}

\begin{figure}
    \centering
    \includegraphics[width=\columnwidth]{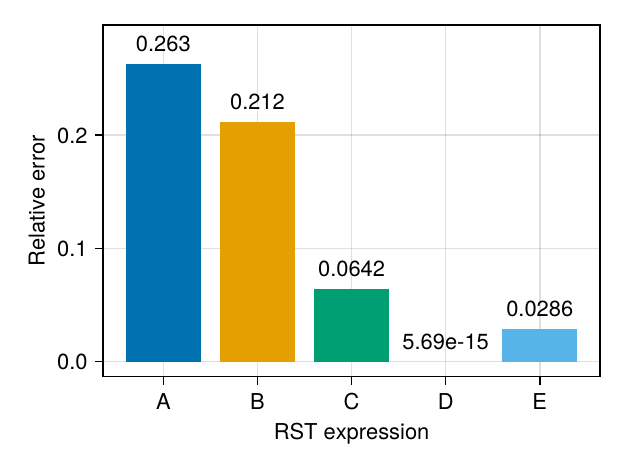}
    \caption{
        Errors from DNS-aided LES-FVM for the 3D decaying turbulence
        simulation.
    }
    \label{fig:ns-dnsaid-errors}
\end{figure}

\newcommand{\errA}{26.3 \%}
\newcommand{\errB}{21.2 \%}
\newcommand{\errC}{6.42 \%}
\newcommand{\errD}{0.00 \%}
\newcommand{\errE}{2.86 \%}

In \cref{fig:ns-dnsaid-errors}, we show the relative errors
\begin{equation}
    \frac{
        \| v^\text{model} - \bar{u}^{\Delta, h, \pi} \|_\Omega
    }{
        \| \bar{u}^{\Delta, h, \pi} \|_\Omega
    }
\end{equation}
at the final time $t = 0.1$ for the five RST expressions.
Our proposed RST expression $\xi^\text{D}$ gives an error at machine
precision, confirming that it is the correct RST expression for the LES-FVM
formulation.
The error for $\xi^\text{E}$ is at $\errE$, showing that the non-symmetric
part of the RST is indeed non-zero.
The error for $\xi^\text{C}$ is at $\errC$, a bit more than twice as large as $\xi^\text{E}$.
This shows that accounting for the filter-swap mechanism is also important.
The error for the classical expression $\xi^\text{B}$ is at $\errB$, a bit more than three
times larger than for $\xi^\text{C}$.
Finally, the no-model $\xi^\text{A}$ gives the largest error at $\errA$.
When comparing the errors for all the expressions, the
the largest absolute improvement is obtained by accounting for the numerical flux
(from $\xi^\text{B}$ to $\xi^\text{C}$).
All errors are evaluated after a short simulation of $0.1$ time units.
They will therefore continue to grow for longer simulations.

\begin{figure}
    \centering
    \includegraphics[width=\columnwidth]{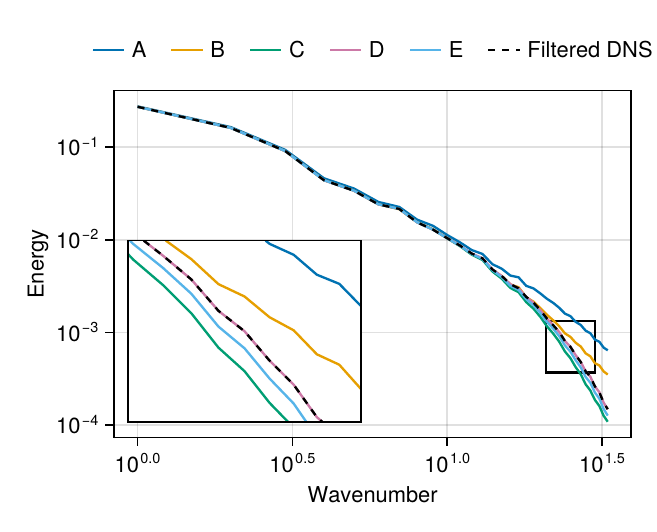}
    \caption{
        Energy spectra from DNS-aided LES-FVM for the 3D decaying turbulence test case.
    }
    \label{fig:ns-dnsaid-spectra}
\end{figure}

In \cref{fig:ns-dnsaid-spectra}, we show the energy spectra at the final time.
The spectrum of $\xi^\text{D}$ perfectly overlaps with the reference spectrum,
while the one of $\xi^\text{E}$ is lower, and the one of $\xi^\text{C}$ is
lower still. This means that not accounting for the non-symmetry and the
filter-swap property leads to a slight over-estimation of the energy
dissipation. On the other hand, the spectrum of the classical RST
$\xi^\text{B}$ is too high, meaning that not accounting for the discretization
in the RST leads to an under-estimation of the energy dissipation. Finally, the
no-model $\xi^\text{A}$ has the highest spectrum, showing that a dissipative
closure model is required to get the correct spectrum.

In the 1D Burgers' case (\cref{fig:burgers_spectrum}), many of the residual
flux expressions led to a sharp increase at the very end of the spectrum. This
was likely due to the shock-like structures that cannot be resolved on the
coarse grid. In \cref{fig:ns-dnsaid-spectra}, no such sharp increase at the end
of the spectrum is observed. Although the 3D resolution is smaller, which makes
it difficult to compare with the 1D case, the 3D experiment is incompressible,
and therefore cannot form the shock-like structures that were present in the
Burgers' case.

\section{Conclusion} \label{sec:conclusion}

In this work, we have proposed new exact expressions
for the residual stress tensor (RST) appearing in
discrete filtered conservation laws when using combined LES and FVM filters.
Unlike the classical RST expression $\overline{u u} - \bar{u} \bar{u}$
which is commonly studied in LES,
the RST in LES-FVM also includes contributions from the discrete divergence,
numerical flux, and discrete incompressibility constraint.
Writing the residual in discrete divergence form makes the RST non-symmetric,
while accounting for the discrete incompressibility constraint makes the RST
non-local.
To make these two properties more intuitive, we show how they are derived using
an alternative notation in \ref{sec:cartesian-notation}.

In a DNS-aided LES framework, where DNS data is used to compute the RST, our
RST expression gives errors at machine precision, while the classical RST
gives errors that grow over time, confirming the importance of accounting for
the discretization errors in the RST.

In practice, DNS-aided LES is only useful for evaluating the correctness of the
RST expression.
Does the new RST provide any practical benefits for LES closure modeling?
We believe so, and we have emphasized two main benefits:
\begin{enumerate}
    \item The new framework informs us about the structure of the RST, which
        can guide the development of new or existing closure models.
        Notably, the LES-FVM RST expression suggests that a ``perfect''
        structural closure model for incompressible flows should be
        non-symmetric and non-local in the velocity field. From the
        experiments in \cref{sec:navier-stokes-experiments}, we have some idea
        of the importance of the non-symmetric part, but the benefit of
        including non-local values in a closure model for incompressible flows
        is still unclear and requires further study.
    \item The new RST can be used as a target for tuning closure model
        parameters.
        By correctly accounting for the discretization in the RST,
        the RST automatically gives the correct desired dissipation profile
        that a functional closure model should reproduce. Notably, the correct
        RST accounts for the implicit dissipation produced by the numerical
        flux, so that the amount of explicitly modeled dissipation can be
        reduced accordingly. Our LES-FVM framework can thus be used as a tool
        to bridge the gap between implicit and explicit LES closure modeling.
\end{enumerate}

For the 1D Burgers equation, we demonstrated the benefit of the second point
by fitting the Smagorinsky model coefficient to the old and new RST
expression. Indeed, the coefficient fitted to the new RST was slightly higher,
adding some of the additional dissipation required by the FVM discretization
and leading to better results. This is despite the fact that the Smagorinsky
model only has one coefficient and is a rather inaccurate closure.

To compute the RST exactly from DNS reference data, we have introduced a new 
\emph{two-grid formulation} that accounts for two different overlapping
discretization levels (see \ref{sec:two-grids}).
This formulation is constructed such that the filter-swap commutation
properties hold discretely.
Making sure the DNS and LES grids are compatible in this way ensures that the
exact RST (with respect to the DNS reference) is computable.
Furthermore, as high-dimensional function approximators such as neural networks
are receiving more attention for LES closure
modeling~\cite{sanderseScientificMachineLearning2025}, access to accurate
discretization-informed target data becomes increasingly
important~\cite{beckDiscretizationConsistentClosureSchemes2023}.

For 1D conservation laws,
the central insight that leads to these discrete formulations was the partial
restoration of commutation between filtering and differentiation for finite
differences and coarsening grid filters. We showed that coarsening filters do
not commute with differentiation, but the partial commutation property of
finite differences was sufficient to obtain a discrete RST
expression. This commutation property also holds for higher-order finite
difference schemes, such as the ones studied by Geurts and van der
Bos~\cite{geurtsNumericallyInducedHighpass2005}. A similar formulation can be
derived for finite volume schemes on unstructured grids, since a
volume-averaged divergence over an unstructured grid cell can be written as a
surface integral over the volume faces.
Such an RST expression was proposed
by Denaro~\cite{denaroWhatDoesFinite2011,
denaroInconsistencefreeIntegralbasedDynamic2018}.

The key to making the LES-FVM framework valid for incompressible flows was to
rewrite the incompressible Navier-Stokes equations as a self-contained
conservation law for the velocity field. This was possible by incorporating the
incompressibility constraint and pressure gradient correction into a pressure
projection operator. This operator then became part of the stress tensor in the
self-contained conservation law for the velocity field.
In the presence of no-slip boundary conditions, this approach can still be used.
However, it is not yet clear to us if the pressure term can be written solely
in terms of the velocity for certain types of outflow boundary conditions that
prescribe a value for the pressure at the boundary.
This topic requires further study.

The LES-FVM framework we propose relies on first choosing a discretization
and a grid size, before choosing a closure model and tuning its parameters. We
therefore cannot expect the closure model to work well for a different
discretization or a different grid size. The discrete closure model must be
recalibrated to discretization-informed target data obtained from
DNS when the LES grid size is changed.

We demonstrated the LES-FVM framework for staggered grids, which are
composed of a primal and a dual grid (velocity and pressure points). The nodes
on the primal and dual grids do not overlap but are separated by half a grid
spacing. This led to the requirement that the primal and dual LES grids
overlap with the primal and dual DNS grids, respectively, by using
uniform Cartesian grids with odd compression factors. This is a limitation of
our framework on staggered grids. With some care, \emph{unstructured} staggered
grids can also be designed such that the primal and dual LES grids overlap with
the primal and dual DNS grids. This can be achieved by \emph{first} choosing
the LES grid and then refining it to obtain the DNS for generating training
data. For flows around objects (such as airfoils), the LES grid would therefore
need to be sufficiently fine to resolve the shape of the object considered. No
further refinement of the object boundary would be possible without losing the
exactness of the formulation. By allowing for some error at the boundary, this
requirement could be relaxed, while retaining the exact formulation in the
interior of the domain.

\section*{Software and reproducibility statement}

The code used to generate the results of this paper is available at
\url{https://github.com/agdestein/ExactClosure.jl}.
It is released under the MIT license.
An archived version is available at
\url{https://zenodo.org/records/16267799}.
The results in \cref{sec:burgers} were obtained on a desktop CPU.
The results in \cref{sec:navier-stokes-experiments} were obtained
using a single Nvidia H100 GPU on the Dutch National Supercomputer Snellius.

To allow for reproducibility on accessible hardware,
the code also includes a scaled down version of the 3D experiment
($90^3$ DNS grid points and $\{ 18^3, 30^3 \}$ LES grid points)
that can be run with multithreading on a modern laptop CPU.
The results for the scaled-down 3D experiment are
similar to those of the full experiment.

\section*{CRediT author statement}

\textbf{Syver Døving Agdestein}:
Conceptualization,
Methodology,
Software,
Validation,
Visualization,
Writing -- Original Draft

\textbf{Roel Verstappen}:
Supervision,
Writing -- Review \& Editing

\textbf{Benjamin Sanderse}:
Formal analysis,
Funding acquisition,
Project administration,
Supervision,
Writing -- Review \& Editing

\section*{Declaration of Generative AI and AI-assisted technologies in the writing process}

During the preparation of this work the authors used GitHub Copilot in order to
propose wordings and mathematical typesetting. After using this tool/service,
the authors reviewed and edited the content as needed and take full
responsibility for the content of the publication.

\section*{Declaration of competing interest}

The authors declare that they have no known competing financial interests or
personal relationships that could have appeared to influence the work reported
in this paper.

\section*{Acknowledgements}

This work is supported by the projects ``Discretize first, reduce next'' (with
project number VI.Vidi.193.105) of the research programme NWO Talent Programme
Vidi and ``Discovering deep physics models with differentiable programming''
(with project number EINF-8705) of the Dutch Collaborating University Computing
Facilities (SURF), both financed by the Dutch Research Council (NWO). We thank
SURF (www.surf.nl) for the support in using the Dutch National Supercomputer
Snellius.

\appendix

\section{Grid-compatible operators and restriction} \label{sec:grid-compatibility}

We say that an operator $o : U \to U$ is $h$-collocated if,
for all $x \in \Omega$, $o(u)(x)$ only depends on the values
$\{u(x + i h) \mid i \in \mathbb{Z} \}$.
Similarly, $o$ is $h$-staggered if $o(u)(x)$ only depends on
$\{u(x + i h + h / 2) \mid i \in \mathbb{Z} \}$.

The finite difference $\partial^h_x$, interpolator $\eta^h_x$, and numerical Burgers flux $r^h$ from
\cref{eq:cl-finite-difference,eq:cl-interpolator,eq:burgers-discrete}
are staggered since $\partial^h_x u(x)$, $\eta^h_x u(x)$, and $r^h(u)(x)$
depend on $u(x \pm h / 2)$, and not on $u(x \pm h)$. When chained together, the
staggered operators become collocated. For example, the expressions
\begin{align}
    \partial^h_x \partial^h_x u(x)
    & = \frac{u(x + h) - 2 u(x) + u(x - h)}{h^2}, \\
    \partial^h_x \eta^h_x u(x)
    & = \frac{u(x + h) - u(x - h)}{2h},
\end{align}
only depend on $u(x - h)$, $u(x)$, and $u(x + h)$.
These expressions do not depend on $u$ at the half-points $x \pm h / 2$.

We say that an operator
(such as the discrete Laplacian $\partial^h_x \partial^h_x$)
is \emph{compatible} with a grid of spacing $h$ if the operator is collocated.
This means that equations involving the operator form a closed
system of equations once restricted to the grid points.
In particular, a flux term of the form $\partial^h_x r^h$ is grid-compatible
if $r^h : U \to U$ is a staggered numerical flux.
We therefore say that a flux is grid-compatible if it is staggered.

At a point $x$, the numerical flux Burgers flux 
$r^h(u) \coloneq (\eta^h_x u)(\eta^h_x u) / 2 - \nu \partial^h_x u$ takes the form
\begin{equation}
\label{eq:flux-in-a-point}
\begin{split}
    r^h(u)(x)
    & = \frac{1}{8}
    \left[ u\left(x - \frac{h}{2}\right) + u\left(x + \frac{h}{2}\right) \right]^2 \\
    & - \frac{\nu}{h} \left[ u\left(x + \frac{h}{2}\right) - u\left(x - \frac{h}{2}\right) \right],
\end{split}
\end{equation}
and the discrete flux divergence is
\begin{equation}
    \label{eq:flux-term-in-a-point}
    \begin{split}
        \partial^h_x r^h(u)(x)
        & = \frac{1}{h}
        \left[ r^h(u)\left(x + \frac{h}{2}\right) - r^h(u)\left(x - \frac{h}{2}\right) \right] \\
        & = \frac{1}{8 h} \left[ u(x + h)^2 - u(x - h)^2 \right] \\
        & + \frac{1}{8 h} u(x) \left[ u(x + h) - u(x - h) \right] \\
        & - \frac{\nu}{h^2} \left[ u(x + h) - 2 u(x) + u(x - h) \right].
    \end{split}
\end{equation}
This expression can be evaluated at any point $x \in \Omega$.
If we evaluate the continuous field $\partial^h_x r^h(u)$ at the
grid points $x^h_i \coloneq i h$, $i \in \mathbb{Z}$, and define
the restriction $u^h_i \coloneq u(x^h_i)$,
we can use the more common (but less general) notation style 
\begin{equation}
    \label{eq:flux-term-in-a-grid-point-restricted}
    \begin{split}
        \partial^h_x r^h(u)(x^h_i)
        & = \frac{1}{8 h} \left[
            (u^h_{i + 1})^2 - (u^h_{i - 1})^2 +
            u^h_i \left( u^h_{i + 1} - u^h_{i - 1} \right)
        \right] \\
        & - \frac{\nu}{h^2} \left[ u^h_{i + 1} - 2 u^h_i + u^h_{i - 1} \right].
    \end{split}
\end{equation}
This restricted form, where the expression is written only in terms of
$(u^h_i)_{i \in \mathbb{Z}}$, is only possible to write because
$\partial^h_x r^h$ is a grid-compatible operator.
We still use the continuous notation like in
\cref{eq:burgers-discrete,eq:flux-in-a-point,eq:flux-term-in-a-point}
for analysis
and only use the restricted
form like \eqref{eq:flux-term-in-a-grid-point-restricted} for computer
evaluation.

\section{Grid-compatible coarse-graining}
\label{sec:two-grids}

\begin{figure*}
    \centering
    \def\svgwidth{\textwidth}
    \includegraphics[width=\textwidth]{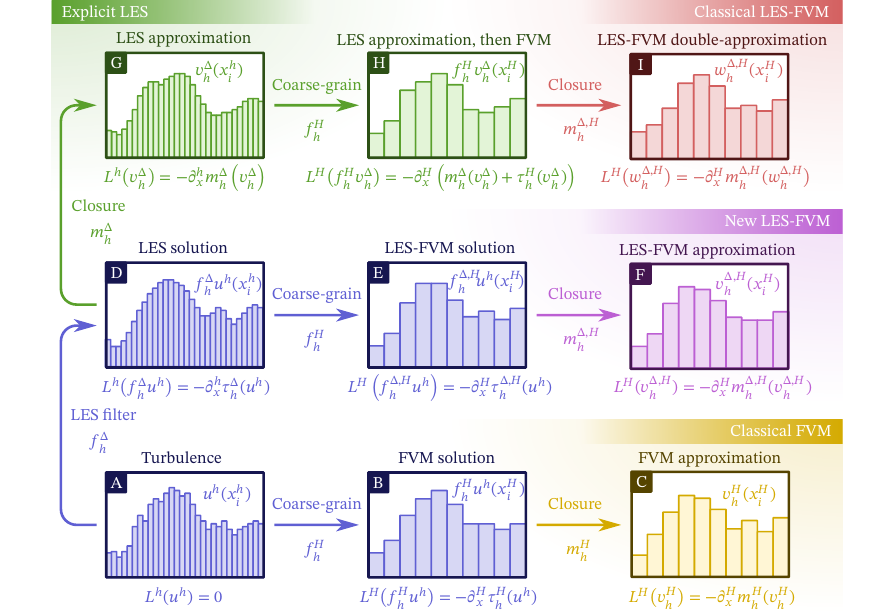}
    \caption{
        Same as \cref{fig:threepaths}, but starting from the DNS solution $u^h$
        instead of the continuous solution $u$.
        In the limit when $h$ goes to zero, this figure becomes equivalent to
        \cref{fig:threepaths}.
        The fine grid points are $(x^h_i)_{i \in \mathbb{Z}}$,
        and the coarse grid points are $(x^H_i)_{i \in \mathbb{Z}}$.
    }
    \label{fig:threepaths-discrete}
\end{figure*}

A problem with the LES-FVM framework
is that we cannot compute target solutions $\bar{u}^{\Delta, h}$ and target
residual fluxes $\tau^{\Delta, h}(u)$ unless we have
access to a continuous solution $u$ in all points $x \in \Omega$.
In practice, reference data is obtained by first doing DNS
by solving
the DNS equation
\begin{equation} \label{eq:dns}
    L^h(u^h) = 0
\end{equation}
with a sufficiently small DNS grid spacing $h$.
This gives the DNS solution $u^h$ at
the $h$-grid points $x^h_i \coloneq i h$, $i \in \mathbb{Z}$.
The LES-FVM problem is then formulated on a coarser
grid with spacing $H > h$.
The goal is to approximate the fields
$\bar{u}^{\Delta, H}(x^H_i)$ and $\tau^{\Delta, H}(u)(x^H_i)$
evaluated at the $H$-grid points $(x^H_i)_{i \in \mathbb{Z}}$
using only the DNS solution $u^h(x^h_i)$ evaluated at the $h$-grid points
$(x^h_i)_{i \in \mathbb{Z}}$.
This framework is illustrated in \cref{fig:threepaths-discrete}.
This two-grid formulation should approximate the one-grid formulation
and converge to it as $h$ goes to $0$.

\subsection{1D filters}

For the 1D case,
we propose the following DNS approximations of the filters that fits our
criteria.
Choose $H \coloneq (2 n + 1) h$ for some $n \in \mathbb{N}$.
The FVM filter $f^H$ is replaced by the
$h$-compatible filter $f^H_h : U \to U$
using the quadrature rule for all $u \in U$ and $x \in \Omega$ as
\begin{equation} \label{eq:cl-two-grid-filter}
    f^H_h u(x) \coloneq \frac{1}{2 n + 1} \sum_{i = -n}^n u(x + i h).
\end{equation}
This filter is designed to satisfy
a discrete equivalent of the filter-swap commutation property
\eqref{eq:cl-continuous-commutation}:
\begin{equation} \label{eq:cl-discrete-commutation}
    \partial^H_x = f^H_h \partial^h_x.
\end{equation}
For proof, see \cref{th:commutation-fHh}.

Assume that the LES kernel $G^\Delta$ of the LES filter $f^\Delta$ is compactly
supported (e.g.\ top-hat) or decays sufficiently fast at infinity
(e.g.\ Gaussian).
We then replace $f^\Delta$ with the
$h$-compatible filter $f^\Delta_h : U \to U$ defined as
\begin{equation}
    f^\Delta_h u(x)
    = \frac{
        \sum_{r = -R}^R G^\Delta(r h) u(x - r h)
    }{
        \sum_{r = -R}^R G^\Delta(r h)
    }
\end{equation}
for some sufficiently large $R \in \mathbb{N}$.
The denominator ensures that the discrete kernel is normalized.
For the Gaussian filter \eqref{eq:gaussian}, the standard deviation is
$\sigma = \Delta / \sqrt{12}$. We then choose $R$ such that
$R h \geq 3 \sigma$, i.e.\
$R \coloneq \lceil 3 \Delta / (\sqrt{12} h) \rceil$, where
$\lceil \cdot \rceil$ is the ceiling function.
The filter $f^\Delta_h$ is $h$-compatible since
$f^\Delta_h u(x)$ only depend on $u$ at $x$ modulo $h$.

Applying the coarse-graining $h$-compatible LES-FVM filter
$f^{\Delta, H}_h \coloneq f^H_h f^\Delta_h$
to the DNS equation \eqref{eq:dns} gives the LES-FVM equation
\begin{equation}
    f^{\Delta, H}_h L^h(u^h) = 0.
\end{equation}
By adding the resolved term $\partial^H_x r^H(f^{\Delta, H}_h u^h)$
to both sides and using the discrete filter-swap property
\eqref{eq:cl-discrete-commutation},
we get the LES-FVM equation in $H$-grid conservative form:
\begin{equation} \label{eq:cl-les-fvm-twogrid}
    L^H(f^{\Delta, H}_h u^h) = - \partial^H_x \tau^{\Delta, H}_h(u^h)
\end{equation}
(see \cref{fig:threepaths-discrete}-E),
where $f^{\Delta, H}_h u^h$ is the coarse-grained LES-FVM solution
we intend to solve for and
\begin{equation}
    \boxed{
        \tau^{\Delta, H}_h(u^h) \coloneq
        f^\Delta_h r^h(u^h) - r^H\left(f^{\Delta, H}_h u^h\right)
    }
\end{equation}
is the residual flux in the LES-FVM equation.
\Cref{eq:cl-les-fvm-twogrid} is analogous to the LES-FVM equation
\eqref{eq:cl-les-fvm}, but it is
written using $H$-grid divergences $\partial^H_x$.

Note that the FVM filters $f^h$, $f^H$, and $f^H_h$ are related through the
property
\begin{equation}
    f^H = f^H_h f^h.
\end{equation}
For proof, see \cref{th:filterlevels}.
At fixed $H$, we can show that $f^H_h \to f^H$ and $f^\Delta_h \to f^\Delta$
as $h \to 0$.
This means that if the DNS is fully resolved, we recover the continuous
setting.

A limitation of our FVM filter $f^H_h$ is that
the compression factor is required to be odd, i.e.\
$H = (2 n + 1) h$ for some $n$, and not $H = 2 n h$.
For an odd compression factor, we can compute both
$f^{\Delta, H}_h u^h$ and and $\tau^{\Delta, H}_h(u^h)$ in the required
staggered grid points exactly, without performing any interpolations.
This would not be possible for an even compression factor, due to the way the
staggered coarse and fine grids overlap.

\subsection{Coarse-graining filters in 3D}

The 1D $h$-compatible LES and FVM filters can easily be extended to 3D
by applying them successively in each coordinate direction.
The 1D FVM filter 
$g^H_{h, i} : U \to U$ can be defined as
\begin{equation}
    g^H_{h, i} u(x)
    \coloneq \frac{1}{2 n + 1} \sum_{r = -n}^n u(x + r h e_i),
\end{equation}
where $e_i$ is the $i$-th unit vector.
The volume-averaging and surface-averaging FVM filters then follow as
$f^H_h \coloneq g^H_{h, 1} g^H_{h, 2} g^H_{h, 3}$,
$f^{H, 1}_h \coloneq g^H_{h, 2} g^H_{h, 3}$, 
$f^{H, 2}_h \coloneq g^H_{h, 1} g^H_{h, 3}$, and
$f^{H, 3}_h \coloneq g^H_{h, 1} g^H_{h, 2}$.

Like their one-grid counterparts, our proposed two-grid filters have the
commutation properties (with no sum over $i$)
\begin{align}
    g^H_{h, i} \partial^h_i & = \partial^H_i, \\
    f^H_h \partial^h_i & = \partial^H_i f^{H, i}_h.
\end{align}

Given a DNS solution $u^h$ restricted to the $h$-grid, target data pairs
$f^{\Delta, H, \pi}_h u^h$ and $\tau^{\Delta, H, \pi}_h(u^h)$ can be computed and then
evaluated in the required $H$-grid points to assess the performance of LES-FVM
closures.

\section{Proofs of commutation properties} \label{sec:proofs}

Here we provide proofs for various properties used in this article.
We recall that $\Omega$ is a periodic 1D domain,
$U$ is the space of periodic 1D fields on $\Omega$,
$f^\Delta$ is a spatial convolutional LES filter (see \cref{eq:convolution}),
$f^h$ is an FVM filter (see \cref{eq:cl-gridfilter}),
$f^\Delta_h$ is a $h$-grid-compatible LES filter
(see \cref{eq:cl-two-grid-filter}),
$f^H_h$ is a $h$-grid-compatible FVM filter (see \cref{eq:cl-two-grid-filter}),
$\partial^h_x$ is a finite difference operator (see \cref{eq:cl-finite-difference}),
$h$ is a grid spacing, and
$H = (2 n + 1) h$ is a coarse grid spacing for some $n \in \mathbb{N}$.

Note that for non-uniform filters or bounded domains,
some of the commutation properties may no longer hold.
Here, we only consider periodic domains.

\begin{theorem} \label{th:commutation-f-partial}
    Spatial convolutional filters $f^\Delta$ commute with
    differentiation~\cite{berselliMathematicsLargeEddy2006}:
    \begin{equation}
        f^\Delta \partial_x = \partial_x f^\Delta.
    \end{equation}
\end{theorem}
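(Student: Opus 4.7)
The plan is to verify the identity $f \partial_x u = \partial_x f u$ pointwise by expanding both sides with the convolution definition \eqref{eq:convolution} and matching them. The key observation is that moving a derivative across a convolution only requires shifting it between the two factors via a change of variable or integration by parts.

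First I would apply differentiation under the integral sign to the right-hand side. Assuming $k$ is differentiable with sufficient regularity (which holds for all kernels considered in the paper: top-hat interpreted as a distribution, Gaussian, etc., and for truncations thereof), we get
\begin{equation*}
    \partial_x (f u)(x) = \partial_x \int_\mathbb{R} k(x - y) u(y) \, \mathrm{d} y = \int_\mathbb{R} k'(x - y) u(y) \, \mathrm{d} y.
\end{equation*}
Next I would rewrite the left-hand side via integration by parts in $y$, exploiting the chain-rule identity $\partial_y k(x - y) = -k'(x - y)$:
\begin{equation*}
    f(\partial_x u)(x) = \int_\mathbb{R} k(x - y) \, u'(y) \, \mathrm{d} y = \bigl[ k(x - y) u(y) \bigr]_{-\infty}^{\infty} + \int_\mathbb{R} k'(x - y) u(y) \, \mathrm{d} y.
\end{equation*}
Matching the two expressions reduces the claim to showing that the boundary term at $\pm \infty$ vanishes.

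The main obstacle is justifying that this boundary contribution is zero, given that $u \in U$ is periodic (hence bounded but not decaying) and the integration is over all of $\mathbb{R}$ rather than a single period. For kernels with compact support, such as the top-hat kernel $k^h$ introduced in the paper, the boundary term is trivially zero since $k(x - y) = 0$ for $|y|$ large enough relative to $x$. For kernels with infinite support, such as the Gaussian, the rapid decay $k(x - y) \to 0$ combined with the boundedness of the periodic $u$ forces $k(x - y) u(y) \to 0$ as $|y| \to \infty$, killing the boundary term. The paper explicitly notes that infinite-support kernels are truncated in practice, so one can always reduce to the compact-support case. Once the boundary term is dispatched, the two displayed integrals coincide, proving $f \partial_x u(x) = \partial_x f u(x)$ for every $x \in \Omega$ and every $u \in U$, and hence the operator identity $f \partial_x = \partial_x f$.
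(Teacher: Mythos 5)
Your proof is correct in substance but takes a genuinely different route from the paper's, and the difference matters for the kernels the paper actually cares about. The paper first performs the change of variables $y \mapsto x - y$ so that the convolution reads $\bar{u}(x) = \int_\mathbb{R} k(y)\, u(x - y)\, \mathrm{d} y$; differentiating under the integral sign then places $\partial_x$ directly on $u(x - y)$, giving $\overline{\partial_x u}(x)$ immediately with no derivative of $k$ and no boundary terms. You instead keep the form $\int_\mathbb{R} k(x - y) u(y)\, \mathrm{d} y$, push the derivative onto the kernel, and recover the left-hand side by integration by parts. That works, but it costs you two extra hypotheses: differentiability of $k$ (which fails classically for the top-hat kernel $k^h$, the central kernel of the paper, so you must invoke a distributional derivative that you do not actually carry through) and the vanishing of the boundary term $\bigl[ k(x - y) u(y) \bigr]_{-\infty}^{\infty}$, which you handle correctly by compact support or decay of $k$ together with boundedness of the periodic $u$. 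The paper's change-of-variables trick buys you the same conclusion for any integrable kernel, smooth or not, in one step; your version buys a slightly more explicit picture of where the derivative "lives" on each side, at the price of regularity assumptions the statement does not need.
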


\begin{proof}
    Let $u \in U$ and $x \in \Omega$. Then
    \begin{equation}
    \begin{split}
        \partial_x \bar{u}^\Delta(x)
        & = \partial_x \left[ \int_\mathbb{R} G^\Delta(x - y) u(y) \, \mathrm{d} y \right] \\
        & = \int_\mathbb{R} \partial_x [G^\Delta(x - y)] u(y) \, \mathrm{d} y \\
        & = \int_\mathbb{R} (\partial_x G^\Delta)(x - y) u(y) \, \mathrm{d} y \\
        & = -\int_\mathbb{R} \partial_y [G^\Delta(x - y)] u(y) \, \mathrm{d} y \\
        & = \int_\mathbb{R} G^\Delta(x - y) \partial_y u(y) \, \mathrm{d} y \\
        & = \overline{\partial_x u}^\Delta(x).
    \end{split}
    \end{equation}
    where we first used Leiblnitz's rule to interchange differentiation and
    integration, then the chain rule, and then integration by parts (assuming that
    the kernel goes to zero at infinity).
    Since this holds for all $u$ and $x$, we have $f \partial_x = \partial_x f$.
\end{proof}

\begin{theorem} \label{th:noncommutation-coarsegraining}
    Coarse-graining and differentiation do not commute:
    \begin{equation}
        \partial^h_x f^h \neq f^h \partial_x.
    \end{equation}
\end{theorem}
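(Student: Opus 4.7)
The plan is to produce an explicit counterexample on a single Fourier mode, thereby establishing the inequality of operators. First I would invoke \cref{th:commutation-f-delta} (proved just above) to rewrite $\dd^h_x f = f \dd^h_x$, reducing the claim to exhibiting some $u \in U$ with $f \dd^h_x u \neq f \partial_x u$. The underlying intuition is that $\dd^h_x$ and $\partial_x$ already differ on every nonzero Fourier mode, and a nontrivial convolutional filter $f$ cannot annul this difference on every mode simultaneously.

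Next I would take the single Fourier mode $u(x) \eq \cos(2 \pi x / \ell)$, which is periodic on $\Omega$. A one-line trigonometric calculation using the identity $\cos(A+B) - \cos(A-B) = -2\sin(A)\sin(B)$ yields
\begin{equation*}
    \dd^h_x u(x) = \sinc\!\left(\frac{\pi h}{\ell}\right) \partial_x u(x),
\end{equation*}
with $\sinc(\pi h/\ell) < 1$ strictly because $0 < h < \ell$. Applying the linear filter $f$ to both sides then gives
\begin{equation*}
    f \dd^h_x u = \sinc\!\left(\frac{\pi h}{\ell}\right) f \partial_x u,
\end{equation*}
so the two fields coincide only if $f \partial_x u \equiv 0$.

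The main obstacle is ruling out the degenerate case where $f$ happens to annihilate the chosen mode. For any standard LES filter (the top-hat $k^h$, a Gaussian, etc.) the Fourier symbol at the fundamental wavenumber is nonzero, so $f \partial_x u \not\equiv 0$ and the inequality is immediate. More generally, for any nontrivial convolutional $f$ one may select some wavenumber $\kappa$ at which the filter's symbol does not vanish and repeat the argument with $u(x) = \cos(\kappa x)$; since $\sinc(\kappa h / 2) \neq 1$ for all $\kappa \neq 0$, such a counterexample always exists, which establishes $\dd^h_x f \neq f \partial_x$ as operators on $U$.
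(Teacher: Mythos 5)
Your proof is correct, but it takes a genuinely different route from the paper's. The paper Taylor-expands the finite difference as $\dd^h_x = \partial_x - \frac{h^2}{24} \partial_{x x x} + \mathcal{O}(h^4)$, commutes $f$ past the derivatives using \cref{th:commutation-f-partial}, and concludes from the nonvanishing of the leading correction $-\frac{h^2}{24} f \partial_{x x x}$; this is an asymptotic, leading-order argument that leaves the behaviour of the $\mathcal{O}(h^4)$ remainder at a fixed $h$ implicit. You instead invoke \cref{th:commutation-f-delta} to reduce the claim to $f \dd^h_x \neq f \partial_x$ and evaluate both operators exactly on a single Fourier mode, where $\dd^h_x$ acts as $\sinc(\kappa h / 2)\,\partial_x$ with $\sinc(\kappa h/2) < 1$; this yields an exact counterexample with no remainder terms, valid for any fixed $h$, which is arguably the cleaner argument. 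Both proofs rest on the same unstated nontriviality hypothesis on $f$: the paper needs the operator $f \partial_{x x x}$ to be nonzero, while you need the transfer function of $f$ to be nonzero at some nonzero admissible wavenumber $2\pi n/\ell$ — essentially the same condition, which fails only for degenerate filters (e.g.\ one retaining only the mean), for which the stated inequality is actually false. Your treatment of that caveat is, if anything, more explicit than the paper's, which only notes the analogous degeneracy in the velocity field ($\partial_{x x x} u = 0$) rather than in the filter.
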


\begin{proof}
    Let $u \in U$.
    The Taylor series expansion of $u$ around a point $x$ is
    \begin{equation}
    \begin{split}
        u\left(x + \frac{h}{2}\right)
        & = u(x)
        + \frac{h}{2} \partial_x u(x)
        + \frac{h^2}{8} \partial_{x x} u(x) \\
        & + \frac{h^3}{48} \partial_{x x x} u(x)
        + \frac{h^4}{384} \partial_{x x x x} u(x)
        + \mathcal{O}(h^5),
    \end{split}
    \end{equation}
    where $\partial_{x x} \coloneq \partial_x \partial_x$ etc.
    Subtracting a similar expansion of $u(x - h / 2)$ makes the even terms
    cancel out. The expansion of the finite difference operator $\partial^h_x$ is
    therefore reduced to
    \begin{equation}
        \partial^h_x = \partial_x - \frac{h^2}{24} \partial_{x x x} + \mathcal{O}(h^4).
    \end{equation}
    This gives
    \begin{equation}
        \begin{split}
            \partial^h_x f^h
            & = \partial_x f^h - \frac{h^2}{24} \partial_{x x x} f^h + \mathcal{O}(h^4) \\
            & = f^h \partial_x - \frac{h^2}{24} f^h \partial_{x x x} + \mathcal{O}(h^4) \\
            & \neq f^h \partial_x,
        \end{split}
    \end{equation}
    since in the operator $f^h \partial_{x x x}$ is non-zero. Here we used
    \cref{th:commutation-f-partial} to swap $f^h$ and $\partial_x$.
    Note that for a few special cases,
    such as velocity fields with $\partial_{x x x} u = 0$
    (and similarly for higher order derivatives),
    we do get $\partial^h_x \bar{u}^h = \overline{\partial_x u}^h$.
\end{proof}

\begin{theorem} \label{th:commutation-fh}
    The finite difference $\partial^h_x$ can be written as a composition between
    the FVM filter and an exact derivative:
    \begin{equation}
        \partial^h_x = f^h \partial_x.
    \end{equation}
\end{theorem}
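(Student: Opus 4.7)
The plan is to apply the fundamental theorem of calculus directly. The statement asserts an identity between two operators, so it suffices to verify that $(f^h \partial_x u)(x) = \partial^h_x u(x)$ for an arbitrary $u \in U$ and arbitrary $x \in \Omega$.

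First, I would unfold the left-hand side using the definition \eqref{eq:cl-gridfilter} of the grid-filter applied to the field $\partial_x u$:
\begin{equation}
    f^h \partial_x u(x) = \frac{1}{h} \int_{x - h/2}^{x + h/2} \partial_x u(y) \, \mathrm{d} y.
\end{equation}
Next, since $u \in U$ is (assumed sufficiently regular and) periodic, $\partial_x u$ is the derivative of an antiderivative $u$, so the fundamental theorem of calculus gives
\begin{equation}
    \int_{x - h/2}^{x + h/2} \partial_x u(y) \, \mathrm{d} y = u\!\left(x + \tfrac{h}{2}\right) - u\!\left(x - \tfrac{h}{2}\right).
\end{equation}
Dividing by $h$ and comparing with the definition \eqref{eq:cl-finite-difference} of the staggered finite difference recognizes the right-hand side as $\partial^h_x u(x)$.

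Since $u$ and $x$ were arbitrary, the operator identity $\partial^h_x = f^h \partial_x$ follows. There is essentially no obstacle in this proof: the entire argument is a one-line application of the fundamental theorem of calculus, made possible by the fact that the top-hat kernel $k^h$ has been chosen precisely so that its support $[-h/2, h/2]$ matches the stencil of $\partial^h_x$. The only subtlety worth remarking on is the regularity assumption on $u$ needed for the fundamental theorem of calculus to apply pointwise; for less regular $u$, the identity should be understood in a distributional sense, but in the periodic smooth setting used throughout the paper this is not an issue.
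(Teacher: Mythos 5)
Your proposal is correct and follows exactly the same route as the paper's proof: unfold the definition of $f^h$ applied to $\partial_x u$, invoke the fundamental theorem of calculus on $[x - h/2,\, x + h/2]$, and identify the result with $\dd^h_x u(x)$. The remark on regularity is a reasonable addition but does not change the argument.
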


\begin{proof}
    The fundamental theorem of calculus states that
    \begin{equation}
        \int_a^b \partial_x u \, \mathrm{d} x = u(b) - u(a)
    \end{equation}
    for all $(a, b) \in \mathbb{R}^2$.
    For $u \in U$ and $x \in \Omega$, this gives
    \begin{equation}
        \begin{split}
            \overline{\partial_x u}^h(x)
            & = \frac{1}{h} \int_{x -h / 2}^{x + h / 2} \partial_y u(y) \, \mathrm{d} y \\
            & = \frac{1}{h} \left[ u\left(x + \frac{h}{2}\right) - u\left(x - \frac{h}{2}\right) \right] \\
            & = \partial^h_x u(x).
        \end{split}
    \end{equation}
    Since this holds for all $u$ and $x$, we have $\partial^h_x = f^h \partial_x$.
\end{proof}

We now show the properties of the coarse-graining filter $f^H_h$.

\begin{theorem} \label{th:filterlevels}
    The average over $H \coloneq (2 n + 1) h$ can be written as a composition
    between
    a coarse-graining filter and the average over $h$:
    \begin{equation}
        f^H = f^H_h f^h,
    \end{equation}
    where $n \in \mathbb{N}$.
\end{theorem}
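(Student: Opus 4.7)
The plan is to unfold both sides using the definitions of $f^h$ and $f^{h \to H}$ given in \cref{eq:cl-gridfilter,eq:cl-two-grid-filter}, and show that the composition telescopes into a single integral over a window of width $H$.

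Concretely, for $u \in U$ and $x \in \Omega$, I would first write
\begin{equation*}
    f^{h \to H} f^h u(x)
    = \frac{1}{2 n + 1} \sum_{i = -n}^n \overline{u}^h(x + i h)
    = \frac{1}{(2 n + 1) h} \sum_{i = -n}^n \int_{x + i h - h / 2}^{x + i h + h / 2} u(y) \, \mathrm{d} y.
\end{equation*}
The key observation is then that the $2 n + 1$ intervals $\bigl[x + i h - h/2, \, x + i h + h/2\bigr]$ for $i = -n, \dots, n$ tile the interval $\bigl[x - (2 n + 1) h / 2, \, x + (2 n + 1) h / 2\bigr] = [x - H/2, x + H/2]$ exactly: consecutive intervals share a single endpoint (measure zero) and their union covers the full coarse window without gaps. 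Hence the sum of integrals collapses to a single integral:
\begin{equation*}
    f^{h \to H} f^h u(x)
    = \frac{1}{(2 n + 1) h} \int_{x - H / 2}^{x + H / 2} u(y) \, \mathrm{d} y
    = \frac{1}{H} \int_{x - H / 2}^{x + H / 2} u(y) \, \mathrm{d} y
    = \overline{u}^H(x).
\end{equation*}
Since this holds for every $u \in U$ and $x \in \Omega$, we conclude $f^H = f^{h \to H} f^h$.

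The argument is essentially a ``midpoint-quadrature-is-exact-for-piecewise-constants'' identity: averaging cell averages with equal weights recovers the averaging over their union. The only subtle point, and the main thing to check carefully, is the tiling arithmetic — namely that the leftmost endpoint $-n h - h/2$ equals $-H/2$ and the rightmost $n h + h/2$ equals $H/2$ precisely because the compression factor $2 n + 1$ is odd; an even factor would offset the coarse window relative to the fine cells and break the exact tiling, which is consistent with the restriction imposed earlier in \cref{sec:cl-two-grid-les}.
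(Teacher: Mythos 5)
Your proposal is correct and matches the paper's proof essentially step for step: both unfold the definitions, observe that the $2n+1$ fine intervals tile $[x - H/2, x + H/2]$ exactly (the paper phrases this as the additivity of the integral over adjacent intervals), and collapse the sum into a single integral. Your extra remark on why an odd compression factor is needed for the tiling to align is a nice addition but not a departure from the paper's argument.
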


\begin{proof}
    Let $u \in U$ and $x \in \Omega$. Then
    \begin{equation}
    \begin{split}
        f^H_h \bar{u}^h(x)
        & = \frac{1}{2 n + 1}
        \sum_{i = -n}^n
        \frac{1}{h}
        \int_{x + i h - h/2}^{x + i h + h/2}
        u(y) \, \mathrm{d} y \\
        & = \frac{1}{(2 n + 1) h} \int_{x - (2 n + 1) h / 2}^{x + (2 n + 1) h / 2} u(y) \, \mathrm{d} y \\
        & = \frac{1}{H} \int_{x - H / 2}^{x + H / 2} u(y) \, \mathrm{d} y \\
        & = \bar{u}^H(x),
    \end{split}
    \end{equation}
    where we used the property
    $\int_a^b u(x) \mathrm{d} x
    + \int_b^c u(x) \mathrm{d} x
    = \int_a^c u(x) \mathrm{d} x$
    for all $(a, b, c) \in \mathbb{R}^3$
    to combine the integrals in the sum.
    Since this holds for all $u$ and $x$, we have $f^H = f^{h \to H} f^h$.
\end{proof}

\begin{theorem} \label{th:commutation-fHh}
    A finite difference over $H \coloneq (2 n + 1) h$
    can be written as a composition between
    a coarse-graining filter and a finite difference over $h$:
    \begin{equation}
        \partial^H_x = f^H_h \partial^h_x,
    \end{equation}
    where $n \in \mathbb{N}$.
\end{theorem}

\begin{proof}
    Using \cref{th:commutation-fh,th:filterlevels}, we have
    \begin{equation}
    \partial^H_x = f^H \partial_x = f^H_h f^h \partial_x = f^H_h \partial^h_x.
    \end{equation}
\end{proof}

\begin{figure}
    \centering
    \includegraphics[width=\columnwidth]{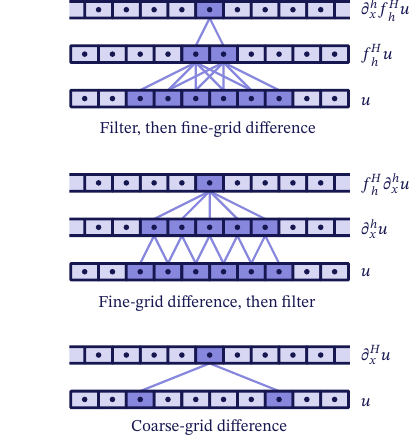}
    \caption{
        For all $u \in U$,
        the three terms 
        $\partial^H_x u$,
        $f^H_h \partial^h_x u$, and
        $\partial^h_x f^H_h u$
        are equal.
        Here we show the three quantities on the
        $h$-grid with $H \coloneq 5 h$.
    }
    \label{fig:commutation}
\end{figure}

Note that \cref{th:commutation-fHh} can be extended to include three terms:
$\partial^H_x = \partial^h_x f^H_h = f^H_h \partial^h_x$, since filtering
and finite differencing do commute
\emph{if we do not coarse-grain the derivative}.
These three terms are shown in \cref{fig:commutation} for $H = 5 h$.
The three terms are equal since all the intermediate terms cancel out in the
telescoping sum in $f^H_h$.
We restrict the fields to the $h$-grid to visualize how the inner terms cancel
out.

\section{Pressure projection for vectors and stress tensors} \label{sec:projection}

The system \eqref{eq:navier-stokes} consist of an evolution equation subject to
a spatial constraint. By defining a pressure projection operator,
these equations can be combined into one self-contained evolution equation.

In this appendix, we introduce two projection operators, one that makes vector
fields in $U^3$ divergence-free and one that makes stress tensor fields in
$U^{3 \times 3}$ divergence-preserving.
We provide proofs for the continuous case. For the discrete case, the proofs are
the identical, the only difference is that instead of $\partial_i$ we use $\partial^h_i$.

\subsection{Pressure projector for vector fields}

The pressure field $p$ enforces the continuity equation $\partial_j u_j = 0$.
In \cref{eq:navier-stokes},
combining the continuity equation with the momentum equation gives
the Poisson equation for the pressure field:
\begin{equation} \label{eq:ns-poisson}
    -\partial_k \partial_k p = \partial_i \partial_j \sigma_{i j}(u).
\end{equation}
By solving this equation explicitly for the pressure, we can write a 
``pressure-free'' momentum equation as
\begin{equation} \label{eq:ns-pi}
    \partial_t u_i + \pi_{i j} \partial_k \sigma_{j k}(u) = 0,
\end{equation}
where
\begin{equation}
    \pi_{i j} \coloneq \delta_{i j} - \partial_{i} (\partial_k \partial_k)^{\dagger} \partial_j
\end{equation}
is a pressure projection operator (see \cref{th:vector-projector} for proof),
$\delta_{i j}$ is the Kronecker symbol,
and the inverse Laplacian $(\partial_k \partial_k)^{\dagger} : \varphi \mapsto p$
maps scalar fields $\varphi$ to the unique solution to the
Poisson equation $\partial_k \partial_k p = \varphi$
subject to the additional constraint of an average pressure of zero,
i.e.\ $\int_\Omega p \, \mathrm{d} V = 0$.
For our periodic domain, the pressure field is determined up to a constant.
We are free to choose the constant this way since it subsequently disappears in the
pressure gradient $\partial_i p$.

\begin{theorem} \label{th:vector-projector}
    The operator
    $\pi_{i j} \coloneq \delta_{i j} - \partial_i (\partial_k \partial_k)^{\dagger} \partial_j$
    is a projector onto the space of divergence-free vector fields,
    i.e.\ $\partial_i \pi_{i j} = 0$ (the output of $\pi$ is divergence-free)
    and $\pi \pi = \pi$
    \cite{chorinNumericalSolutionNavierStokes1968,
    temamLapproximationSolutionEquations1969,
    vremanProjectionMethodIncompressible2014}.
\end{theorem}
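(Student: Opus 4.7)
The plan is to verify the two defining properties of a projector separately: first divergence-freeness of the image, then idempotence. Before either, I would record a single key fact about the inverse Laplacian that makes both proofs essentially one-liners. Specifically, on the periodic domain $\Omega$, the operator $(\partial_k \partial_k)^\dagger$ is defined on the subspace $U_0 \subset U$ of zero-mean periodic fields, where it satisfies $(\partial_k \partial_k)^\dagger (\partial_k \partial_k) \varphi = \varphi$ for all $\varphi \in U_0$. The divergence $\partial_j u_j$ of any periodic vector field is in $U_0$ by the divergence theorem (its integral over $\Omega$ vanishes by periodicity), so $(\partial_k \partial_k)^\dagger \partial_j u_j$ is well-defined for every $u \in U^3$.

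For the first property, I would apply $\partial_i$ to the definition to obtain
\begin{equation*}
    \partial_i \pi_{i j} = \partial_j - \partial_i \partial_i (\partial_k \partial_k)^\dagger \partial_j = \partial_j - \partial_j = 0,
\end{equation*}
where the cancellation uses that $\partial_j$ acts on an arbitrary $u \in U^3$, so $\partial_j u_j \in U_0$ and the inverse Laplacian identity applies.

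For idempotence, I would expand the composition directly:
\begin{equation*}
    \pi_{i j} \pi_{j k} = \bigl( \kron_{i j} - \partial_i A \partial_j \bigr) \bigl( \kron_{j k} - \partial_j A \partial_k \bigr),
\end{equation*}
with the shorthand $A \eq (\partial_l \partial_l)^\dagger$, giving
\begin{equation*}
    \pi_{i j} \pi_{j k} = \kron_{i k} - \partial_i A \partial_k - \partial_i A \partial_k + \partial_i A (\partial_j \partial_j) A \partial_k.
\end{equation*}
The final term simplifies because the middle factor $A (\partial_j \partial_j)$ acts on $\partial_k$ applied to a vector field, whose image is in $U_0$; hence $A (\partial_j \partial_j) = \text{id}$ there, leaving $\partial_i A \partial_k$. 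Two of the three $\partial_i A \partial_k$ terms then cancel, yielding $\pi_{i j} \pi_{j k} = \kron_{i k} - \partial_i A \partial_k = \pi_{i k}$.

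The only real subtlety is domain management: the intermediate expression $A (\partial_j \partial_j)$ is the identity only on $U_0$, so I would emphasize in the write-up that every application of $A$ in these derivations has a zero-mean argument (since it is always preceded by a divergence of something), which guarantees all cancellations are valid. Everything else is mechanical index-calculus, and the corresponding discrete statement in \cref{th:stress-projector}'s setting follows by textually replacing $\partial_i$ with $\dd^h_i$, since the only property used is that $(\partial_k \partial_k)^\dagger$ inverts $\partial_k \partial_k$ on zero-mean fields---a property the discrete inverse Laplacian shares by construction.
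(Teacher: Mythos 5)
Your proof is correct and follows essentially the same route as the paper's: both verify $\partial_i \pi_{i j} = 0$ and idempotence by direct expansion, with all cancellations resting on the single fact that the inverse Laplacian inverts $\partial_k \partial_k$. The only difference is that you make explicit the zero-mean domain bookkeeping for $(\partial_k \partial_k)^{\dagger}$, which the paper leaves implicit; this is a welcome refinement rather than a different argument.
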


\begin{proof}
    We have
    $\partial_i \partial_i (\partial_j \partial_j)^{\dagger} = 1$,
    since $(\partial_j \partial_j)^{\dagger}$ gives solutions to the Poisson equation.
    This can be used to show that $\pi_{i j}$ makes vector fields divergence-free.
    The divergence $\partial_i$ composed with $\pi_{i j}$ is
    \begin{equation}
        \partial_i \pi_{i j}
        = \underbrace{\partial_i \delta_{i j}}_{\partial_j}
        - \underbrace{\partial_i \partial_i (\partial_k \partial_k)^{\dagger}}_{1} \partial_j
        = \partial_j - \partial_j
        = 0.
    \end{equation}
    This means that for all $u \in U^3$,
    $\partial_i \pi_{i j} u_j = 0$,
    so $\pi u$ is divergence-free
    (even if $u$ is not).
    Additionally, we get
    \begin{equation}
        \begin{split}
            \pi_{i j} \pi_{j k}
            & = \delta_{i j} \delta_{j k} \\
            & - \delta_{i j} \partial_{j} (\partial_\alpha \partial_\alpha)^{\dagger} \partial_k
            - \delta_{j k} \partial_{i} (\partial_\alpha \partial_\alpha)^{\dagger} \partial_j \\
            & + \partial_{i}
            (\partial_\alpha \partial_\alpha)^{\dagger}
            \underbrace{\partial_j \partial_{j} (\partial_\beta \partial_\beta)^{\dagger}}_{1}
            \partial_k \\
            & = \delta_{i k} - 2 \partial_{i} (\partial_\alpha \partial_\alpha)^{\dagger}
            \partial_k
            + \partial_{i} (\partial_\alpha \partial_\alpha)^{\dagger} \partial_k \\
            & = \pi_{i k}.
        \end{split}
    \end{equation}
    Since $\pi \pi = \pi$, we can conclude that $\pi$ is idempotent.
\end{proof}

The projected momentum equation \eqref{eq:ns-pi}
automatically enforces the continuity equation at all times
\emph{by construction}
(as long as $\partial_j u_j = 0$ at the initial time).
The pressure term and continuity equation can be ignored,
at the cost of making the momentum equations non-local
(the inverse Laplacian is non-local).

\subsection{Pressure projector for tensor fields}

We say that a stress tensor $\sigma \in U^{3 \times 3}$
is divergence-preserving if
$\partial_j \sigma_{i j}$ is divergence-free, i.e.\
$\partial_i \partial_j \sigma_{i j} = 0$.
Define the tensor projector $\pi : U^{3 \times 3} \to U^{3 \times 3}$ as
\begin{equation}
    \pi_{i j \alpha \beta} \coloneq \delta_{i \alpha} \delta_{j \beta} -
    \delta_{i j} \left( \partial_k \partial_k \right)^{\dagger}
    \partial_\alpha \partial_\beta.
\end{equation}
This operator maps stress tensors to stress tensors.

\begin{theorem} \label{th:stress-projector}
    The operator $\pi_{i j \alpha \beta}$ is a projector onto the space of
    divergence-preserving stress tensors, i.e.\
    $\partial_i \partial_j \pi_{i j \alpha \beta} = 0$
    (the output of $\pi$ is divergence-preserving) and
    $\pi \pi = \pi$.
\end{theorem}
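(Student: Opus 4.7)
The plan is to verify both defining properties of a projector directly from the formula, mirroring the argument used for the vector projector in \cref{th:vector-projector}. The two properties to establish are (i) the divergence-preservation property $\partial_i \partial_j \pi_{i j \alpha \beta} = 0$, and (ii) idempotence $\pi \pi = \pi$, where composition is with respect to the inner pair of indices.

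For (i), I would apply $\partial_i \partial_j$ to the two pieces of $\pi_{i j \alpha \beta}$ separately. The first piece $\kron_{i \alpha} \kron_{j \beta}$ contracts trivially to $\partial_\alpha \partial_\beta$. In the second piece, the factor $\partial_i \partial_j \kron_{i j}$ collapses to the Laplacian $\partial_k \partial_k$, which is annihilated by the inverse Laplacian $(\partial_k \partial_k)^{\dagger}$ so the remaining factor $\partial_\alpha \partial_\beta$ survives. The two contributions cancel, giving zero.

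For (ii), the plan is to expand the composition $\pi_{i j \alpha \beta} \pi_{\alpha \beta \gamma \delta}$ into four terms and simplify each:
\begin{itemize}
\item the Kronecker-Kronecker term gives $\kron_{i \gamma} \kron_{j \delta}$;
\item one mixed term simplifies via $\kron_{i \alpha} \kron_{j \beta} \kron_{\alpha \beta} = \kron_{i j}$;
\item the other mixed term simplifies via $\partial_\alpha \partial_\beta \kron_{\alpha \gamma} \kron_{\beta \delta} = \partial_\gamma \partial_\delta$;
\item the quadratic term contains $\partial_\alpha \partial_\beta \kron_{\alpha \beta} = \partial_\alpha \partial_\alpha$, which cancels one of the inverse Laplacians via $\partial_\alpha \partial_\alpha (\partial_m \partial_m)^{\dagger} = \mathrm{id}$.
\end{itemize}
The three non-Kronecker terms then combine as $-\kron_{i j}(\partial_k \partial_k)^{\dagger} \partial_\gamma \partial_\delta$ (two with a minus sign, one with a plus), reproducing $\pi_{i j \gamma \delta}$.

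The only subtlety is the handling of $(\partial_k \partial_k)^{\dagger}$: one must use the fact that $(\partial_k \partial_k)^{\dagger}$ is a right-inverse to $\partial_k \partial_k$ on the range of the Laplacian, and that the range here contains the relevant functions (since terms $\partial_\alpha \partial_\beta u$ have zero mean on the periodic domain, they are in the image). This is exactly the same technical point already invoked in \cref{th:vector-projector}, so no new obstacle arises. The discrete version of the statement is proved identically, with $\partial_i$ replaced by $\dd^h_i$ and $(\partial_k \partial_k)^{\dagger}$ by the discrete pseudoinverse $(\dd^h_k \dd^h_k)^{\dagger}$, since the only algebraic facts used are the contraction identities and the inverse-Laplacian relation, both of which carry over verbatim.
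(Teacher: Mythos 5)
Your proposal is correct and follows essentially the same route as the paper: contract $\partial_i\partial_j$ against the two pieces of $\pi_{i j \alpha \beta}$ using $\partial_i\partial_i(\partial_k\partial_k)^{\dagger} = 1$ to get cancellation, and expand the composition into four terms whose non-identity parts combine with coefficient $-2+1=-1$ to recover $\pi$. Your explicit remark that the relevant inputs $\partial_\alpha\partial_\beta\sigma_{\alpha\beta}$ have zero mean on the periodic domain (so the pseudoinverse identity applies) is a point the paper leaves implicit, but otherwise the arguments coincide.
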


\begin{proof}
    The double-divergence $\partial_i \partial_j$ composed with the operator
    $\pi_{i j \alpha \beta}$ is
    \begin{equation}
    \begin{split}
        \partial_i \partial_j \pi_{i j \alpha \beta}
        & = \delta_{i \alpha} \delta_{j \beta} \partial_i \partial_j
        - \delta_{i j} \partial_i \partial_j
        \left( \partial_k \partial_k \right)^{\dagger}
        \partial_\alpha \partial_\beta \\
        & = \partial_\alpha \partial_\beta
        - \underbrace{\partial_i \partial_i
        \left( \partial_k \partial_k \right)^{\dagger}}_{1}
        \partial_\alpha \partial_\beta \\
        & = \partial_\alpha \partial_\beta - \partial_\alpha \partial_\beta \\
        & = 0.
    \end{split}
    \end{equation}
    This means that for all $\sigma \in U^{3 \times 3}$, we have
    $\partial_i \partial_j ( \pi_{i j \alpha \beta} \sigma_{\alpha \beta} ) = 0$,
    so $\pi \sigma$ is a divergence-preserving
    tensor.

    Furthermore, applying the operator twice gives
    \begin{equation}
    \begin{split}
        \pi_{i j \alpha \beta} \pi_{\alpha \beta m n} =
        & \left( \delta_{i \alpha} \delta_{j \beta} - \delta_{i j} (\partial_k \partial_k)^{\dagger}
        \partial_\alpha \partial_\beta \right) \\
        & \left( \delta_{\alpha m} \delta_{\beta n} - \delta_{\alpha \beta} (\partial_l \partial_l)^{\dagger} \partial_m \partial_n \right) \\
        = &
        (\delta_{i \alpha} \delta_{j \beta}) (\delta_{\alpha m} \delta_{\beta n}) \\
        - &
        (\delta_{i \alpha} \delta_{j \beta}) \delta_{\alpha \beta} (\partial_l \partial_l)^{\dagger} \partial_m \partial_n \\
        - &
        (\delta_{\alpha m} \delta_{\beta n}) \delta_{i j} (\partial_k \partial_k)^{\dagger} \partial_\alpha \partial_\beta \\
        + &
        \delta_{i j} \delta_{\alpha \beta}
        (\partial_k \partial_k)^{\dagger} \partial_\alpha \partial_\beta
        (\partial_l \partial_l)^{\dagger} \partial_m \partial_n \\
        = &
        \delta_{i m} \delta_{j n} \\
        - & \delta_{i j} (\partial_l \partial_l)^{\dagger} \partial_m \partial_n \\
        - & \delta_{i j} (\partial_k \partial_k)^{\dagger} \partial_m \partial_n \\
        + & \delta_{i j} (\partial_k \partial_k)^{\dagger}
        \underbrace{\partial_\alpha \partial_\alpha (\partial_l
        \partial_l)^{\dagger}}_{1}
        \partial_m \partial_n \\
        = & \delta_{i m} \delta_{j n} + (-2 + 1) \delta_{i j} (\partial_k \partial_k)^{\dagger} \partial_m \partial_n \\
        = & \pi_{i j m n}.
    \end{split}
    \end{equation}
    Since $\pi \pi = \pi$, we
    can conclude that $\pi$ is idempotent.
\end{proof}

The vector-projector $\pi_{i j}$ and tensor-projector $\pi_{i j \alpha \beta}$
satisfy the following commutation property for the tensor-divergence.

\begin{theorem} \label{th:projection-commutation}
    Projection and tensor-divergence commute, i.e.\ for all stress tensors
    $\sigma \in U^{3 \times 3}$, we have
    \begin{equation}
        \pi_{i j} \partial_k \sigma_{j k} = \partial_j \pi_{i j \alpha \beta} \sigma_{\alpha \beta}.
    \end{equation}
\end{theorem}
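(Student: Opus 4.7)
The plan is to verify the identity by direct expansion of the two projection operators using their explicit definitions, and then match terms after relabeling dummy indices. Because both $\pi_{i j}$ and $\pi_{i j \alpha \beta}$ are built from the \emph{same} ingredients (Kronecker deltas, partial derivatives, and the inverse Laplacian), the ``non-trivial'' pieces should coincide once the divergences are moved past the Kronecker deltas.

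First, I would expand the left-hand side. Substituting the definition of the vector-projector yields
\begin{equation*}
    \pi_{i j} \partial_k \sigma_{j k}
    = \partial_k \sigma_{i k}
    - \partial_i (\partial_m \partial_m)^{\dagger} \partial_j \partial_k \sigma_{j k}.
\end{equation*}
Next, I would expand the right-hand side by substituting the definition of the tensor-projector and using $\partial_j \kron_{i j} = \partial_i$:
\begin{equation*}
    \partial_j \pi_{i j \alpha \beta} \sigma_{\alpha \beta}
    = \partial_j \sigma_{i j}
    - \partial_i (\partial_m \partial_m)^{\dagger} \partial_\alpha \partial_\beta \sigma_{\alpha \beta}.
\end{equation*}
Finally, renaming the summation indices ($k \to j$ in the resolved part, and $(j,k) \to (\alpha,\beta)$ in the correction) shows that the two expressions coincide. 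Here I use that $(\partial_m \partial_m)^\dagger$ is linear and commutes with any $\partial_\ell$ on the periodic domain, so the order in which $\partial_\alpha$, $\partial_\beta$, and $(\partial_m \partial_m)^\dagger$ appear does not matter.

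There is no real analytical obstacle; the result is a bookkeeping identity that reflects a structural compatibility between the two projectors. The most delicate point is ensuring that the commutation $\partial_j (\partial_m \partial_m)^\dagger = (\partial_m \partial_m)^\dagger \partial_j$ is justified, which on a periodic domain with the zero-mean constraint follows immediately from the translation invariance of both operators. The discrete analogue used in \cref{sec:ns-h} would be proved identically, replacing $\partial_i$ by $\dd^h_i$ throughout.
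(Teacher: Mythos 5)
Your proposal is correct and follows essentially the same route as the paper: direct expansion of the projectors, contraction of the divergence with the Kronecker deltas, and matching of the remaining terms (the paper simply runs the computation as a single chain from the right-hand side to $\pi_{i\alpha}\partial_\beta\sigma_{\alpha\beta}$ rather than expanding both sides separately). Note that the commutation $\partial_j(\partial_m\partial_m)^\dagger = (\partial_m\partial_m)^\dagger\partial_j$ you flag as the delicate point is not actually needed here, since the two correction terms already agree verbatim after relabeling dummy indices.
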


\begin{proof}
    Let $\sigma \in U^{3 \times 3}$ be a stress tensor.
    The tensor-divergence $\partial_j$ of the projected stress tensor $\pi_{i j \alpha \beta} \sigma_{\alpha \beta}$ is
    \begin{equation}
    \begin{split}
        \partial_j \pi_{i j \alpha \beta} \sigma_{\alpha \beta}
        & = \partial_j \left( \delta_{i \alpha} \delta_{j \beta}
            - \delta_{i j} (\partial_k \partial_k)^{\dagger} \partial_\alpha
            \partial_\beta \right) \sigma_{\alpha \beta} \\
        & = \left(
            \delta_{i \alpha} \partial_\beta -
            \partial_i (\partial_k \partial_k)^{\dagger} \partial_\alpha \partial_\beta \right)
            \sigma_{\alpha \beta} \\
        & = \left(
            \delta_{i \alpha} -
            \partial_i (\partial_k \partial_k)^{\dagger} \partial_\alpha \right)
            \partial_\beta \sigma_{\alpha \beta} \\
        & = \pi_{i \alpha} \partial_\beta \sigma_{\alpha \beta},
    \end{split}
    \end{equation}
    which is the projected tensor-divergence of $\sigma$.
\end{proof}

Note that we use the same symbol $\pi$ for both the vector and stress tensor projectors.
It should be clear from the context which version is used.

\section{FVM of the incompressible Navier-Stokes equations in alternative notation}
\label{sec:cartesian-notation}

We here present the incompressible Navier-Stokes equations in an alternative
notation to highlight how the RST in the FVM equation becomes non-local and non-symmetric. 
In Cartesian notation, the incompressible Navier-Stokes equations read
\begin{align}
    \partial_x u_x + \partial_y u_y + \partial_z u_z & = 0, \\
    \partial_t u_x + \partial_x \left( \sigma_{x x} + p \right) + \partial_y \sigma_{x y} + \partial_z \sigma_{x z} & = 0, \\
    \partial_t u_y + \partial_x \sigma_{y x} + \partial_y \left( \sigma_{y y} + p \right) + \partial_z \sigma_{y z} & = 0, \\
    \partial_t u_z + \partial_x \sigma_{z x} + \partial_y \sigma_{z y} + \partial_z \left( \sigma_{z z} + p \right) & = 0,
\end{align}
where
\begin{align}
    \sigma
    & \coloneq u \otimes u - \nu \left( \nabla u + \nabla u^T \right), \\
    u \otimes u
    & \coloneq \begin{pmatrix}
        u_x u_x & u_x u_y & u_x u_z \\
        u_y u_x & u_y u_y & u_y u_z \\
        u_z u_x & u_z u_y & u_z u_z
    \end{pmatrix}, \\
    \nabla u
    & \coloneq \begin{pmatrix}
        \partial_x u_x & \partial_y u_x & \partial_z u_x \\
        \partial_x u_y & \partial_y u_y & \partial_z u_y \\
        \partial_x u_z & \partial_y u_z & \partial_z u_z
    \end{pmatrix}.
\end{align}
In vector notation, we can write the equations as
\begin{equation}
    \nabla \cdot u = 0, \quad \partial_t u + \nabla \cdot
    \left( \sigma + p \delta \right) = 0.
\end{equation}
If we eliminate the pressure, this system can be written in projection-form as
three equations
\begin{align}
    \partial_t u_x +
    \partial_x r_{x x} +
    \partial_y r_{x y} +
    \partial_z r_{x z}
    & = 0, \\
    \partial_t u_y +
    \partial_x r_{y x} +
    \partial_y r_{y y} +
    \partial_z r_{y z}
    & = 0, \\
    \partial_t u_z +
    \partial_x r_{z x} +
    \partial_y r_{z y} +
    \partial_z r_{z z}
    & = 0,
\end{align}
or, in vector notation,
\begin{equation}
    \partial_t u + \nabla \cdot r = 0,
\end{equation}
where
\begin{equation}
    r \coloneq \pi \sigma \coloneq \sigma + p \delta
\end{equation}
is the projected stress tensor,
\begin{equation}
    p \coloneq -\triangle^\dagger \nabla \cdot \nabla \cdot \sigma,
\end{equation}
is a pressure field that depends non-locally on $\sigma$,
\begin{equation}
    \triangle \coloneq \partial_{x x} + \partial_{y y} + \partial_{z z},
\end{equation}
is the Laplacian,
$A^\dagger : b \mapsto x$ gives the unique solution (up to a constant) to the equation $A x =
b$ (we can for example design $A^\dagger$ to return the solution with zero
mean),
and
\begin{equation}
\begin{split}
    \nabla \cdot \nabla \cdot \sigma
    & \coloneq
    \partial_{x x} \sigma_{x x}
    + \partial_{y y} \sigma_{y y}
    + \partial_{z z} \sigma_{z z} \\
    & + 2 \left( \partial_{x y} \sigma_{x y}
    + \partial_{x z} \sigma_{x z}
+ \partial_{y z} \sigma_{y z} \right).
\end{split}
\end{equation}

\subsection{The stress tensor in the FVM equation}

Here we will write the FVM equation in ``unresolved'' form, without ever
introducing the resolved part $\nabla^h \cdot r^h(\bar{u}^h)$ and the numerical
stress tensor $r^h$.
The goal is to understand the properties of the stress tensor.

The FVM equation is
\begin{equation}
    \partial_t \bar{u}^h + \overline{\nabla \cdot \pi \sigma}^h = 0.
\end{equation}
For the tensor divergence,
the discrete filter-swap property \eqref{eq:ns-filter-swap} can be written as
$\overline{\nabla \cdot r}^h = \nabla^h \cdot \bar{r}^{h, *}$ for all tensors $r$,
where
$\nabla^h \coloneq (\partial^h_x, \partial^h_y, \partial^h_z)$ and
\begin{equation}
    \bar{r}^{h, *} \coloneq \begin{pmatrix}
        \bar{r}_{x x}^{h, x} &
        \bar{r}_{x y}^{h, y} &
        \bar{r}_{x z}^{h, z} \\
        \bar{r}_{y x}^{h, x} &
        \bar{r}_{y y}^{h, y} &
        \bar{r}_{y z}^{h, z} \\
        \bar{r}_{z x}^{h, x} &
        \bar{r}_{z y}^{h, y} &
        \bar{r}_{z z}^{h, z}
    \end{pmatrix}
\end{equation}
contains the surface-averaging filters 
$f^{h, x} \coloneq g^h_y g^h_z$,
$f^{h, y} \coloneq g^h_x g^h_z$, and
$f^{h, z} \coloneq g^h_x g^h_y$
for 1D top-hat filters $g^h_i$ in direction $i$.
This gives the $\nabla^h$-conservation law for $\bar{u}^h$:
\begin{equation}
    \partial_t \bar{u}^h + \nabla^h \cdot \overline{\pi \sigma}^{h, *} = 0.
\end{equation}
The stress tensor appearing the the $\nabla^h$-conservation law for $\bar{u}^h$ is therefore
\begin{equation}
\begin{split}
    \overline{\pi \sigma}^{h, *}
    & = \overline{\sigma + p \delta}^{h, *} \\
    & = \begin{pmatrix}
        \bar{\sigma}_{x x}^{h, x} &
        \bar{\sigma}_{x y}^{h, y} &
        \bar{\sigma}_{x z}^{h, z} \\
        \bar{\sigma}_{y x}^{h, x} &
        \bar{\sigma}_{y y}^{h, y} &
        \bar{\sigma}_{y z}^{h, z} \\
        \bar{\sigma}_{z x}^{h, x} &
        \bar{\sigma}_{z y}^{h, y} &
        \bar{\sigma}_{z z}^{h, z}
    \end{pmatrix} +
    \begin{pmatrix}
        \bar{p}^{h, x} & 0 & 0 \\
        0 & \bar{p}^{h, y} & 0 \\
        0 & 0 & \bar{p}^{h, z}
    \end{pmatrix}.
\end{split}
\end{equation}
This tensor is clearly non-symmetric, since
\begin{equation}
    f^{h, x} \neq f^{h, y} \neq f^{h, z}.
\end{equation}
Furthermore, the surface-averaged pressure tensor
\begin{equation}
    \overline{p \delta}^{h, *} =
    \begin{pmatrix}
        \bar{p}^{h, x} & 0 & 0 \\
        0 & \bar{p}^{h, y} & 0 \\
        0 & 0 & \bar{p}^{h, z}
    \end{pmatrix}
\end{equation}
is not an isotropic tensor
(it cannot be written as $q^h \delta$ for some scalar field $q^h$).
This is one of the reasons why the volume-averaged field $\bar{u}^h$ is not
$\nabla^h$-divergence-free, i.e.\ $\nabla^h \cdot \bar{u}^h \neq 0$.

Define the $\nabla^h$-divergence-free part of $\bar{u}^h$ as
\begin{equation}
    \bar{u}^{h, \pi} \coloneq \pi^h \bar{u}^h \coloneq
    \bar{u}^h - \nabla^h \left( \triangle^h \right)^\dagger \nabla^h \cdot \bar{u}^h,
\end{equation}
where $\pi^h$
is a vector-version of a discrete pressure projector 
and $\triangle^h \coloneq \partial^h_{x x} + \partial^h_{y y} + \partial^h_{z z}$.
The equation for $\bar{u}^{h, \pi}$ is
\begin{equation}
    \partial_t \bar{u}^{h, \pi} +
    \nabla^h \cdot \pi^h \overline{\pi \sigma}^{h, *} = 0,
\end{equation}
where
$\pi^h : \sigma \mapsto \sigma -
( \triangle^h )^\dagger \nabla^h \cdot \nabla^h \cdot \sigma$
here denotes the tensor version of the discrete pressure projector
(this should be clear from context).
We used \cref{th:projection-commutation} to swap $\nabla^h$ and $\pi^h$.
In the tensor $\pi^h \overline{\pi \sigma}^{h, *}$,
all the isotropic parts of $\overline{\pi \sigma}^{h, *}$ are absorbed by $\pi^h$.
But since the tensor $\overline{p \delta}^{h, *}$ is non-isotropic,
it does not get absorbed, and cannot be removed from the expression
$\pi^h \overline{\pi \sigma}^{h, *}$.
This is also why the pressure projector $\pi$ cannot be
taken outside the surface-averaging filter.
The stress tensor in the $\nabla^h$-conservation law for $\bar{u}^{h, \pi}$ is therefore
non-local in the external unresolved field $u$, which $p$ depends on.
This also remains the case even if we \emph{reintroduce} the incompressibility
constraint for $\bar{u}^{h, \pi}$ as
\begin{equation}
    \nabla^h \cdot \bar{u}^{h, \pi} = 0, \quad
    \partial_t \bar{u}^{h, \pi} + \nabla^h \cdot
    \left( \overline{\pi \sigma}^{h, *} + q^h \delta \right) = 0,
\end{equation}
where $q^h$ is the unique pressure (up to a constant) such that
$\overline{\pi \sigma}^{h, *} + q^h \delta = \pi^h \overline{\pi \sigma}^{h, *}$
and thus $\bar{u}^{\pi, h}$ remains $\nabla^h$-divergence-free.
Importantly, $q^h \neq \bar{p}^h$ is not the filtered pressure field,
since $\nabla^h$-incompressibility and $\nabla$-incompressibility are different
types of constraints.
The original pressure field $p$ is therefore still present in the stress
tensor, even though the equations are in incompressibility-constraint-form.

The fact that the unresolved stress tensor in the equation for
$\bar{u}^{h, \pi}$ is non-local in $u$
also suggest that a closure model for this unresolved stress tensor should be
non-local in $\bar{u}^{h, \pi}$.
If $m(\bar{u}^{h, \pi}) \approx \overline{\pi \sigma(u)}^{h, *}$
is a closure model for the total unresolved stress tensor,
then it should ideally be
\begin{enumerate}
    \item non-symmetric ($m^T \neq m$),
    \item non-local ($m(\bar{u}^{h, \pi})(x)$
        should depend on $\bar{u}^{h, \pi}(x + d)$
        for potentially large displacements $d \in \mathbb{R}^3$).
\end{enumerate}

\end{document}